\pdfoutput=1
\RequirePackage{ifpdf}
\ifpdf 
\documentclass[pdftex]{sigma}
\else
\documentclass{sigma}
\fi

\usepackage{mathrsfs,mathtools}

\usepackage[all]{xy}

\numberwithin{equation}{section}

\newtheorem{Theorem}{Theorem}[section]
\newtheorem*{Theorem*}{Theorem}
\newtheorem{Corollary}[Theorem]{Corollary}
\newtheorem{Lemma}[Theorem]{Lemma}
\newtheorem{Proposition}[Theorem]{Proposition}
 { \theoremstyle{definition}
\newtheorem{Definition}[Theorem]{Definition}

\newtheorem{Remark}[Theorem]{Remark}

\newtheorem{Question}[Theorem]{Question}
\newtheorem{Assumption}[Theorem]{Assumption}}

\newcommand{\C}{\mathbb{C}}
\newcommand{\Z}{\mathbb{Z}}
\newcommand{\R}{\mathbb{R}}

\newcommand{\ps}{\mathbb{P}}
\newcommand{\id}{\mathrm{id}}

\newcommand{\Image}{\operatorname{Im}}
\newcommand{\Ker}{\operatorname{Ker}}
\newcommand{\rk}{\operatorname{rank}}
\newcommand{\Hom}{\operatorname{Hom}}
\newcommand{\End}{\operatorname{End}}

\newcommand{\ord}{\operatorname{ord}}
\newcommand{\Tr}{\operatorname{Tr}}

\newcommand{\sheafhom}{\operatorname{\mathcal{H}{\rm om}}}
\newcommand{\strsheaf}{\mathcal{O}}

\newcommand{\gr}{\operatorname{\mathrm{Gr}}}
\newcommand{\pa}{\operatorname{\mathcal{P}ar}}

\begin{document}

\allowdisplaybreaks

\newcommand{\arXivNumber}{2403.07258}

\renewcommand{\PaperNumber}{111}

\FirstPageHeading

\ShortArticleName{Harmonic Metrics for Higgs Bundles of Rank 3 in the Hitchin Section}

\ArticleName{Harmonic Metrics for Higgs Bundles of Rank 3\\in the Hitchin Section}

\Author{Hitoshi FUJIOKA}

\AuthorNameForHeading{H.~Fujioka}

\Address{Research Institute for Mathematical Sciences, Kyoto University, Kyoto 606-8502, Japan}
\Email{\href{mailto:fujioka@kurims.kyoto-u.ac.jp}{fujioka@kurims.kyoto-u.ac.jp}}

\ArticleDates{Received April 21, 2024, in final form November 29, 2024; Published online December 11, 2024}

\Abstract{Given a tuple of holomorphic differentials on a Riemann surface, one can define a Higgs bundle in the Hitchin section and a natural symmetric pairing of the Higgs bundle. We study whether a Higgs bundle of rank 3 in the Hitchin section has a compatible harmonic metric when the spectral curve is a 2-sheeted branched covering of the Riemann surface. In~particular, we give a condition for Higgs bundles in the Hitchin section on $\mathbb{C}$ or $\mathbb{C}^*$ to have compatible harmonic metrics.}

\Keywords{Higgs bundles; Hitchin section; harmonic metrics}

\Classification{53C07; 14D21}

\section{Introduction}
\subsection{Harmonic bundles}
Suppose that $X$ is a Riemann surface. Let $E$ be a holomorphic vector bundle on $X$ and ${\theta\in H^0(X,\End{E}\otimes K_X)}$, where $K_X$ is the canonical line bundle of $X$. The pair $(E,\theta)$ is called a~Higgs bundle and $\theta$ is called a~Higgs field.
\begin{Definition}
 Let $(E,\theta)$ be a Higgs bundle over $X$. A Hermitian metric $h$ of $(E,\theta)$ is called harmonic if
 \begin{equation}\label{Hitchin_eq}
 F_{\nabla_h} + \bigl[\theta,\theta^{\ast h}\bigr] = 0,
 \end{equation}
 where $\nabla_h$ is the Chern connection of $(E,h)$, $F_{\nabla_h}$ is the curvature of $\nabla_h$ and $\theta^{\ast h}$ is the adjoint of $\theta$ with respect to $h$. Such a tuple $(E,\theta,h)$ is called a harmonic bundle.
\end{Definition}
\begin{Remark}
 When $X$ is compact, for a $\C$-vector bundle $E$ on $X$, the degree $\deg{E}$ is defined by \smash{$\int_X c_1(E)$}.
 The equation (\ref{Hitchin_eq}) means that the connection $\nabla_h + \theta + \theta^{*h}$ is flat. Therefore, we implicitly focus on holomorphic bundles of degree 0.
\end{Remark}
The trivial bundle $\strsheaf_X = X\times \C$ has the trivial Hermitian metric $h_X$ defined by $h_X(a,b) = a\Bar{b}$. If a harmonic bundle $(E,\theta,h)$ of rank $r$ on $X$ satisfies $\det(E) = \strsheaf_X$, $\Tr{\theta}=0$ and $\det(h) = h_X$, $(E,\theta,h)$ is called an ${\rm SL}(r,\C)$-harmonic bundle.

The equation (\ref{Hitchin_eq}) was introduced by Hitchin in \cite{Hi1} as the dimensional reduction of the self-duality equations. It is an important question whether a harmonic metric exists for a given Higgs bundle. Because (\ref{Hitchin_eq}) is a nonlinear partial differential equation, it is difficult to solve it for general Higgs bundles. For Higgs bundles on compact Riemann surfaces, the following theorem, due to Hitchin and Simpson, establishes the existence and the uniqueness of harmonic metrics, and it is the most fundamental theorem about harmonic bundles.
\begin{Theorem}[Hitchin \cite{Hi1} and Simpson \cite{Si1}]\label{Hitchin_thm1}
 Suppose that $X$ is compact. A Higgs bundle $(E,\theta)$ on~$X$ has a harmonic metric if and only if $(E,\theta)$ is polystable and $\deg{E}=0$. Moreover, if $h_1$ and~$h_2$ are harmonic metrics of $(E,\theta)$, then there exists a decomposition
 \begin{equation*}
 (E,\theta) = \bigoplus_{i=1}^{n} (E_i,\theta|_{E_i}),
 \end{equation*}
 such that $c_i h_1|_{E_i} = h_2|_{E_i}$ for positive constants $c_1,c_2,\ldots,c_n$ and this decomposition is orthogonal with respect to $h_1$ and $h_2$.
\end{Theorem}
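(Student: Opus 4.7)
For this theorem the strategy splits naturally into three parts: necessity, sufficiency, and the uniqueness clause.

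For necessity, I would argue by Chern--Weil theory. Taking the trace in~\eqref{Hitchin_eq} and using that the trace of a commutator vanishes yields $\Tr F_{\nabla_h} = 0$, which integrates to $\deg E = 0$. For semistability, given any $\theta$-invariant saturated subsheaf $F \subset E$, introduce the $h$-orthogonal projection $\pi_F$; a standard integration-by-parts computation (a Higgs-bundle analogue of Kobayashi's Chern--Weil argument for Hermite--Einstein metrics) bounds $\deg F$ above by a non-positive quantity built from the $L^{2}$-norms of $\bar\partial \pi_F$ and of the commutator $[\theta,\pi_F]$. This gives semistability, and the equality case (slope of $F$ equal to that of $E$) forces $\bar\partial \pi_F = 0$ and $[\theta,\pi_F] = 0$, producing an orthogonal, $\theta$-compatible holomorphic splitting and hence polystability.

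The hard direction is sufficiency, and I expect it to be the principal obstacle. My plan is to follow Hitchin's continuity method as reformulated by Simpson. Fix a background Hermitian metric $k$ and write $h = k \cdot \mathrm{e}^{s}$ for a $k$-self-adjoint endomorphism $s$; the Hitchin equation then becomes a nonlinear elliptic PDE for $s$. Introduce the perturbed equation
\begin{equation*}
 F_{\nabla_h} + \bigl[\theta,\theta^{\ast h}\bigr] + \varepsilon\, s\, \omega = 0,
\end{equation*}
where $\omega$ is a K\"ahler form on $X$. For large $\varepsilon$ the associated Donaldson-type functional is coercive and solvability follows by variational methods; one then lets $\varepsilon \downarrow 0$. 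The central analytic step is an a priori $C^{0}$ bound on $s$ as $\varepsilon \to 0$: if it holds, elliptic regularity delivers a limiting harmonic metric, while if it fails, an Uhlenbeck--Yau-type construction extracts from the blow-up sequence a weakly holomorphic, $\theta$-invariant, destabilising subsheaf of $E$, contradicting polystability (the strictly polystable case requires an additional argument to recover each summand as a harmonic sub-bundle).

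For uniqueness, given two harmonic metrics $h_1$ and $h_2$, set $s \coloneqq h_1^{-1} h_2 \in \End E$; this $s$ is $h_1$-self-adjoint and positive. Subtracting the Hitchin equations for $h_1$ and $h_2$ and pairing with $s$ yields a Bochner-type identity implying $\Delta \log \Tr(s) \leq 0$ on the compact surface $X$. The maximum principle forces $\log \Tr(s)$ to be constant, and tracing back through the identity gives $\bar\partial s = 0$ together with $[\theta, s] = 0$. The eigenspace decomposition of $s$ therefore provides the desired direct sum $(E,\theta) = \bigoplus_i (E_i, \theta|_{E_i})$, simultaneously orthogonal for $h_1$ and $h_2$, with $h_2|_{E_i} = c_i \cdot h_1|_{E_i}$ for the positive eigenvalues $c_i$ of $s$.
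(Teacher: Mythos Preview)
The paper does not prove this theorem. Theorem~\ref{Hitchin_thm1} is stated with attribution to Hitchin~\cite{Hi1} and Simpson~\cite{Si1} and is used as a black box; there is no accompanying proof or even a sketch in the paper itself. So there is nothing to compare your proposal against.

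That said, your outline is a faithful summary of the standard arguments in the cited references. The necessity argument via Chern--Weil and orthogonal projection is exactly Simpson's Lemma~3.1 in~\cite{Si1}; the sufficiency direction via a perturbed heat flow / continuity method and the Donaldson functional is the content of~\cite[Theorem~1]{Si1} (building on Donaldson's and Uhlenbeck--Yau's earlier work and Hitchin's original treatment in rank~2); and the uniqueness via the endomorphism $s=h_1^{-1}h_2$ and a Bochner/maximum-principle argument is also standard. One small caution: in your sufficiency sketch, the passage from ``stable implies harmonic'' to ``polystable implies harmonic'' is not just an ``additional argument'' but requires knowing that the stable summands inherit compatible metrics whose direct sum solves~\eqref{Hitchin_eq}; this is straightforward once the stable case is in hand, but should be stated. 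Otherwise your plan is sound as a roadmap to the literature, even though the present paper does not itself supply a proof.
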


For general Higgs bundles on noncompact Riemann surfaces, it is unknown when a harmonic metric exists.
Suppose that $X$ is the complement of a finite subset in a compact Riemann surface. In \cite{Si1}, Simpson gave a sufficient condition for the existence of a harmonic metric of a~Higgs bundle $(E,\theta)$ on $X$. Let $g_X$ be a K\"{a}hler metric satisfying the condition in \cite[Proposition~2.4]{Si1}. Let $h_0$ be a Hermitian metric of $E$ such that $F_{\nabla_{h}} + \bigl[\theta, \theta^{\ast h}\bigr]$ is bounded with respect to~$h_0$ and~$g_X$ . We~can define the stability condition for $(E,\theta, h_0)$ as explained in \cite[Section 3]{Si1}. Suppose~$\det(h_0)$ is flat for simplicity. If $(E,\theta, h_0)$ is stable, then there exists a harmonic metric $h$ of $(E,\theta)$ with~$h_0$ and~$h$ is mutually bounded \cite[Theorem 1]{Si1}. In addition, Theorem \ref{Hitchin_thm1} is generalized also for filtered Higgs bundles. It was proved by Simpson~\cite{Si2} in the tame case and by Biquard--Boalch \cite{BB} and Mochizuki \cite{Mo1} in the wild case (see Section~\ref{sec2} for details).

\subsection{Higgs bundles in the Hitchin section}
\subsubsection{Non-degenerate symmetric pairings}
For a Higgs bundle $(E,\theta)$, a holomorphic symmetric pairing $C$ of $E$ is called a symmetric pairing of $(E,\theta)$ if $C(\id\otimes\theta) = C(\theta\otimes \id)$. A harmonic metric $h$ of $(E,\theta)$ is said to be compatible with a~non-degenerate symmetric pairing $C$ of $(E,\theta)$ if the isomorphism $E\to E^{*}$, $v\mapsto (w\mapsto C(w,v))$ is an isometry with respect to $h$ and $h^{*}$, where $h^{*}$ is the Hermitian metric on $E^{*}$ induced by $h$.

\subsubsection{Higgs bundles in the Hitchin section}
Let $q_j$ be a holomorphic $j$-differential on $X$ for $j=2,\ldots,r$. The tuple $(q_2,\dots,q_r)$ is denoted by~$q$. We fix a line bundle \smash{$K_X^{1/2}$}. The holomorphic differential $q_j$ induces the morphisms
\begin{equation*}
 K_X ^{(r-2i+1)/2} \longrightarrow K_X ^{(r-2i+2j-1)/2}\otimes K_X
\end{equation*}
for $i=j,\ldots,r$. Then we can construct a Higgs bundle $(\mathbb{K}_{X,r},\theta(q))$ of rank $r$ as follows:
\begin{equation*}
 \mathbb{K}_{X,r} = \bigoplus_{i=1}^r K_X^{(r-2i+1)/2},\qquad
 \theta(q) =
 \begin{pmatrix}
 0 & q_2 & q_3 &\cdots & q_r \\
 1 & 0 & q_2 & \cdots & q_{r-1} \\
 0 & 1 & \ddots & \ddots & \vdots \\
 \vdots & \ddots & \ddots & \ddots & q_2 \\
 0 & \cdots & 0 & 1 & 0
\end{pmatrix}.
\end{equation*}
These Higgs bundles $(\mathbb{K}_{X,r},\theta(q))$ were introduced by Hitchin in \cite{Hi2} and are called Higgs bundles in the Hitchin section. If $X$ is compact, then a mapping, which is called the Hitchin fibration, from the moduli space $\mathcal{M}$ of polystable ${\rm SL}(r,\mathbb{C})$-Higgs bundles on $X$ to $\bigoplus_{i=2}^r H^0\bigl(X,K_X^{\otimes i}\bigr)$ is constructed by assigning to $[(E,\theta)] \in \mathcal{M}$ the coefficients of the characteristic polynomial of $\theta$, and these Higgs bundles $(\mathbb{K}_{X,r},\theta(q))$ form a right-inverse of the Hitchin fibration.

\subsubsection{Symmetric pairings of Higgs bundles in the Hitchin section}
A Higgs bundle $(\mathbb{K}_{X,r},\theta(q))$ in the Hitchin section has a natural symmetric pairing $C_{X,r}$ defined by the~morphism
\[
K_X^{(r-2i+1)/2} \otimes K_X^{-(r-2i+1)/2} \to \mathcal{O}_X.
\]
 Suppose that $z$ is a local holomorphic~coordinate and \smash{$({\rm d}z)^{1/2}$} is a local frame of \smash{$K_X^{1/2}$}, the pairing $C_{X,2}$ on \smash{$K_X^{1/2}\oplus K_X^{-1/2}$} is given~by
\begin{equation*}
 \begin{pmatrix}
 0 & 1 \\
 1 & 0
\end{pmatrix}
\end{equation*}
with respect to the frame $\bigl(({\rm d}z)^{1/2},({\rm d}z)^{-1/2}\bigr)$. The pairing $C_{X,3}$ on $K_X\oplus \strsheaf_X \oplus K_X^{-1}$ is given~by
\begin{equation*}
\begin{pmatrix}
 0 & 0 & 1 \\
 0 & 1 & 0 \\
 1 & 0 & 0
\end{pmatrix}
\end{equation*}
with respect to the frame $\bigl({\rm d}z,1,({\rm d}z)^{-1}\bigr)$.

\subsection{Main results}
We will consider the following question.
\begin{Question}\label{question}
 Suppose that $X$ is a noncompact parabolic Riemann surface, i.e., $X=\C$ or~$\C^*$.
 Given a tuple of holomorphic polynomial differentials $q=(q_2,\dots,q_r)$ on $X$, i.e., meromorphic differentials on $\ps^1$ which have a possible pole at $\infty$, does there exist a harmonic metric of~$(\mathbb{K}_{X,r},\theta(q))$ compatible with $C_{X,r}$?
\end{Question}
\begin{Remark}
In \cite{LM2}, Li and Mochizuki proved that if $X$ is hyperbolic, then there exists a~harmonic metric of $(\mathbb{K}_{X,r},\theta(q))$ compatible with $C_{X,r}$.
\end{Remark}
In general, for a Higgs bundle $(E,\theta)$ on $X$, the spectral curve $\Sigma_{E,\theta}$ of $(E,\theta)$ is a curve defined by $\{ t\in K_{X} \mid \det(t\,\id_E -\theta)=0\}$. We can define the natural projection from $\Sigma_{E,\theta}$ to $X$ by restricting $K_X\to X$ to $\Sigma_{E,\theta}$. The spectral curve of $(\mathbb{K}_{X,r},\theta(q))$ is denoted by $\Sigma_{X,q}$. We call the natural projection $\pi\colon \Sigma_{X,q}\to X$ an $n$-sheeted branched covering if the fiber of $\pi\colon \Sigma_{X,q}\to X$ with the largest cardinality consists of $n$ elements. Let $z$ be the holomorphic coordinate of $\C$. The following theorems are our main results, giving a partial answer to Question \ref{question}.
\begin{Theorem}\label{main}
 If the natural projection $\pi\colon \Sigma_{\C,q}\to \C$ is a one- or two-sheeted branched covering, then there exists a polynomial $f\in \C[z]$ such that
 \begin{equation*}
 q_2 = 3\cdot 2^{-5/3}f^2 ({\rm d}z)^2, \qquad q_3 = f^3 ({\rm d}z)^3.
 \end{equation*}
 If $\deg{f}\geq 2$, then there exists a harmonic metric of $(\mathbb{K}_{\C,3},\theta(q))$ compatible with $C_{\C,3}$. Otherwise, there does not exist any harmonic metric compatible with $C_{\C,3}$.
\end{Theorem}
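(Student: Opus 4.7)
The first step is algebraic: expanding directly, $\chi_{\theta(q)}(t)=t^3-2q_2 t-q_3$, so the projection $\pi$ is at most $2$-sheeted precisely when this cubic generically has a repeated root, equivalently when its discriminant $32q_2^3-27q_3^2$ vanishes identically. Writing $q_2=a(z)(dz)^2$ and $q_3=b(z)(dz)^3$, the relation $32a^3=27b^2$ combined with unique factorization in $\C[z]$ forces each irreducible factor of $a$ to occur with even multiplicity and each irreducible factor of $b$ with multiplicity divisible by three; a factor-by-factor comparison produces $f\in\C[z]$ with $a=C_1f^2$ and $b=C_2f^3$ for constants satisfying $32C_1^3=27C_2^2$. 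Rescaling $f$ by a suitable cube root of $C_2$ then normalizes $C_2=1$ and $C_1=3\cdot 2^{-5/3}$.

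For the existence part (assuming $\deg f\geq 2$), my plan is to treat $\C=\ps^1\setminus\{\infty\}$ and view $(\mathbb{K}_{\C,3},\theta(q))$ as a meromorphic Higgs bundle on $\ps^1$ with an irregular singularity at $\infty$. The eigenvalues of $\theta(q)$ are $-2^{-1/3}f\,dz$ with multiplicity two and $2^{2/3}f\,dz$, so $\deg f\geq 2$ ensures that their polar parts at $\infty$ give a good set of irregular values in the sense of the wild nonabelian Hodge correspondence. Equipping the natural extension on $\ps^1$ with an adapted parabolic structure, verifying polystability, and invoking the Biquard--Boalch--Mochizuki theorem (the wild analogue of Theorem~\ref{Hitchin_thm1}) yields a harmonic metric $h$. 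To upgrade $h$ to a $C_{\C,3}$-compatible metric, I would use that the pairing induces an isomorphism of $(\mathbb{K}_{\C,3},\theta(q))$ with its dual; both $h$ and its transport via $C_{\C,3}$ satisfy the Hitchin equation, and an averaging argument based on the uniqueness in the filtered setting produces a compatible metric.

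For non-existence when $\deg f\leq 1$, suppose for contradiction that a compatible harmonic metric $h$ exists. A direct linear-algebra check shows that at points where $f\neq 0$, $\theta(q)$ has a single nontrivial $2\times 2$ Jordan block at its double eigenvalue $-2^{-1/3}f\,dz$, so compatibility with $C_{\C,3}$ together with the Hitchin equation rigidly constrains $h$ in terms of a single scalar function satisfying a specific elliptic equation on $\C$. An asymptotic analysis at $\infty$, exploiting the at-most-linear growth of $f$ forced by $\deg f\leq 1$, shows that no such function defined on all of $\C$ can exist, yielding the required contradiction. The main obstacle throughout is the coordinated handling of the symmetric pairing $C_{\C,3}$: standard existence theorems supply a harmonic metric but not automatically a compatible one, and the non-existence half requires quantitative control of the asymptotic behavior that simultaneously leverages the Jordan structure of $\theta(q)$ and the orthogonality relations imposed by $C_{\C,3}$.
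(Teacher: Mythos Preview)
Your algebraic opening (the discriminant argument producing $f$) matches Lemma~3.3 of the paper and is correct.

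For the existence half, your high-level strategy (extend to $\ps^1$, endow with a filtered structure, invoke the wild Kobayashi--Hitchin correspondence) is exactly what the paper does, but you have misidentified the role of the hypothesis $\deg f\geq 2$. The polar parts of the eigenvalues $2^{2/3}f\,dz$ and $-2^{-1/3}f\,dz$ at $\infty$ are perfectly ``good'' irregular values for \emph{every} $\deg f\geq 0$ (indeed $f\,dz$ has a pole of order $\deg f+2\geq 2$), so goodness is never the obstruction. What fails for $\deg f\leq 1$ is \emph{stability}: the paper shows (Lemmas~3.11, 3.12 and Proposition~3.13) that the filtered extension $\mathcal{P}^d_*\Tilde{E}$ one writes down is stable if and only if the parabolic weights satisfy a strict inequality which, in the $\C$ case, amounts to $d_2^{(\infty)}\in(2,(\deg f+3)/2]$; this interval is nonempty exactly when $\deg f\geq 2$. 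Your sentence ``equipping~\dots{} and verifying polystability'' hides the entire content of the argument. For compatibility with $C_{\C,3}$ the paper does not use an averaging trick but invokes a theorem of Li--Mochizuki (Theorem~2.17 here) giving the filtered Kobayashi--Hitchin correspondence directly in the presence of a perfect symmetric pairing; your averaging idea is plausible but would need the uniqueness statement in the wild setting together with control of the induced parabolic weights under dualizing.

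For non-existence your proposal diverges substantially from the paper and, as written, has a genuine gap. The paper argues algebraically: it proves (Lemma~3.17 and Theorem~3.23) that \emph{every} good filtered extension of $(\mathbb{K}_{\C,3},\theta(q))$ on which $C_{\C,3}$ extends to a perfect pairing is of the explicit form $\mathcal{P}^d_*\Tilde{E}$ for some $d$, and then observes that no such $d$ satisfies the stability inequality when $\deg f\leq 1$ (the constant case is handled separately in Proposition~3.15 via the global splitting $E=E_1\oplus E_3$ and the known non-existence of harmonic metrics for nonzero nilpotent Higgs fields on $\C$ or $\C^*$). By contrast, your plan is analytic: reduce the Hitchin equation to a scalar PDE and rule out global solutions by asymptotics. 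Even granting the reduction, the step ``an asymptotic analysis at $\infty$ \dots{} shows that no such function defined on all of $\C$ can exist'' is an assertion, not an argument; proving non-existence of entire solutions to a nonlinear elliptic equation typically requires either a Liouville-type theorem or precise subsolution/supersolution bounds, none of which you indicate. The paper's classification route avoids analysis entirely once the correspondence with filtered bundles is in hand, and this is where the symmetric pairing really does the work: perfection of $\Tilde{C}$ pins down the filtration on $E_1$ uniquely and forces the relation $d_2^{(\infty)}+d_3^{(\infty)}=-\ord_\infty(\omega)$ on $E_3$, leaving a one-parameter family that is easy to test for stability.
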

\begin{Theorem}\label{main_Cstar}
 If the natural projection $\pi\colon \Sigma_{\C^*,q}\to \C^*$ is a one- or two-sheeted branched covering, then there exists $f\in \C\bigl[z,z^{-1}\bigr]$ such that
 \begin{equation*}
 q_2 = 3\cdot 2^{-5/3}f^2 ({\rm d}z/z)^2, \qquad q_3 = f^3 ({\rm d}z/z)^3.
 \end{equation*}
 If $f$ is not constant, then there exists a harmonic metric of $(\mathbb{K}_{\C^*,3},\theta(q))$ compatible with $C_{\C^*,3}$. If~$f$~is~constant, then there does not exist any harmonic metric compatible with $C_{\C^*,3}$.
\end{Theorem}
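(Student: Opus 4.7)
The plan is to parallel the proof of Theorem~\ref{main} with the modifications required for the $\C^*$-setting, where one must keep track of the two ends $z=0$ and $z=\infty$.

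First, to read off the form of $(q_2,q_3)$, I would compute via a cofactor expansion that the characteristic polynomial of $\theta(q)$ is $t^3-2q_2 t-q_3$, whose discriminant is $\Delta = 32q_2^3-27q_3^2$. The hypothesis that the spectral cover has at most two sheets forces $\Delta$ to vanish identically. Writing $q_j=g_j(z)(\mathrm{d}z/z)^j$ with $g_j\in\C[z,z^{-1}]$, this becomes $g_3^2=(32/27)g_2^3$, and comparing orders of $g_2,g_3$ at every point of $\C^*\cup\{0,\infty\}$ shows that $f:=g_3/g_2$ extends to a Laurent polynomial with $g_3=f^3$ and $g_2=3\cdot 2^{-5/3}f^2$. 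The one-sheeted case forces a triple root which, being trace-free, gives $q=0$, i.e., $f=0$.

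For existence when $f$ is non-constant, I~would approximate $\C^*$ by the hyperbolic annuli $X_R=\{R^{-1}<|z|<R\}$. On each $X_R$ the Li--Mochizuki result cited in the excerpt produces a harmonic metric $h_R$ compatible with $C_{X_R,3}$. The compatibility with $C$, combined with the ${\rm SL}(3,\C)$-normalization, cuts the Hermitian structure down to a real form and reduces the Hitchin equation to a Toda-type system in a few real potentials. The main task is to extract uniform two-sided bounds on these potentials on every compact subset of $\C^*$ so as to pass to a subsequential limit $h_\infty$. This is where non-constancy of $f$ enters: $|f|$ must tend to $0$ or $\infty$ at at least one of $\{0,\infty\}$, and this divergence makes available barriers (built from the decoupled Hitchin equation around the spectral-curve eigenvalues) that pin down the $h_R$ on compacts independently of~$R$.

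For non-existence when $f$ is constant, separate the cases $f=0$ and $f=c\in\C^\times$. The nilpotent case $f=0$ follows the argument of Theorem~\ref{main}: the scalar reduction of the Hitchin equation admits no positive globally defined solution with the required asymptotics. The novelty specific to $\C^*$ is $f=c\neq 0$: here $\theta(q)$ is invariant (up to an obvious gauge on the Hitchin frame) under the circle action $z\mapsto e^{{\rm i}\varphi}z$, and a uniqueness argument (via the hyperbolic annuli, using the uniqueness part of Li--Mochizuki and a limit) forces any compatible harmonic metric on $\C^*$ to be $S^1$-invariant. The Hitchin equation then reduces to an autonomous second-order system for potentials depending only on $s=\log|z|\in\R$; a first-integral analysis shows that no solution exists that is bounded and positive-definite on all of $\R$, the obstruction being that the ``source'' $|f|^2=|c|^2$ is constant and provides no decay at either end.

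The hard part will be the existence argument: obtaining the approximating metrics~$h_R$ is immediate from Li--Mochizuki, but producing uniform bounds on them on every compact subset of $\C^*$ -- and simultaneously controlling the behaviour near both $0$ and $\infty$ -- requires carefully chosen barriers that reflect the asymptotics dictated by the spectral curve $\Sigma_{\C^*,q}$. Constructing these barriers, and passing to the limit without loss of positive-definiteness, is where the bulk of the technical work should concentrate.
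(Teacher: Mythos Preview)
Your route diverges from the paper's at both the existence and the non-existence steps; only the discriminant computation giving the form of $(q_2,q_3)$ matches. The paper never touches PDEs directly: it works on $(\mathbb{P}^1,\{0,\infty\})$, builds explicit good filtered extensions of $(\mathbb{K}_{\C^*,3},\theta(q))$ carrying a perfect extension of $C_{\C^*,3}$, verifies stability by computing the degrees of the four Higgs subbundles $E_1,E_2,E_3,E_4$, and then invokes the Kobayashi--Hitchin correspondence of Theorem~\ref{correspond_thm}. When $f$ has a zero in $\C^*$, or $f=az^b$ with $|b|\ge 3$, this is Proposition~\ref{main_prop} and Corollary~\ref{main_Cstar_cor}; for the borderline cases $f=az^b$ with $|b|=1,2$ the filtered structures are written down by hand (Propositions~\ref{prop_Cstar_b2} and~\ref{prop_Cstar_b1}). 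Your exhaustion-by-annuli plan is a legitimate alternative in spirit, but the step you yourself flag as hard --- uniform two-sided bounds for the $h_R$ on compacta --- is left entirely unargued, and the barriers you invoke (``built from the decoupled Hitchin equation around the spectral-curve eigenvalues'') are precisely what is unavailable when the spectral cover is degenerate rather than generically regular semisimple. The filtered-bundle route sidesteps this analytic difficulty altogether.

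There is a genuine gap in your non-existence argument for $f=c\in\C^\times$. You want to force any compatible harmonic metric $h$ to be $S^1$-invariant by appealing to ``the uniqueness part of Li--Mochizuki,'' but no uniqueness statement is available on $\C^*$: compatible harmonic metrics need not be unique in this setting (the paper itself exhibits one-parameter families in the analogous $\C$ case, Remark~\ref{rem_classifying}), and restricting a given $h$ to an annulus $X_R$ does not reproduce the Li--Mochizuki metric there. Without $S^1$-invariance the ODE reduction never starts, and the asserted first-integral obstruction is unsupported. The paper's argument (Proposition~\ref{remark_degree}) is much shorter and handles $f\equiv 0$ and $f\equiv c\neq 0$ uniformly: any compatible harmonic metric is wild at $0$ and $\infty$, so it induces a good filtered bundle which, by Lemma~\ref{pole_condition}, splits as $\mathcal{P}^h_*E_1\oplus\mathcal{P}^h_*E_3$; each summand is then polystable of degree $0$ and hence carries a harmonic metric, but on $E_3$ the Higgs field $\theta|_{E_3}-\lambda_2\,\id$ is nonzero nilpotent, which is obstructed by \cite[Propositions~3.41 and 3.42]{LM3}.
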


{\samepage\begin{Remark}\quad
 \begin{itemize}\itemsep=0pt
 \item In \cite{LM1}, Li and Mochizuki proved that a Higgs bundle $(E,\theta)$ with a symmetric pairing admits a compatible harmonic metric if $(E,\theta)$ is generically regular semisimple using an analytical approach. In particular, if $\pi$ is a 3-sheeted branched covering, then $(\mathbb{K}_{X,3},\theta(q))$ has a harmonic metric compatible with $C_{X,3}$. Therefore, we obtain the complete answer to the $r=3$ case of Question \ref{question}.
 \item In the $r=2$ case, $(\mathbb{K}_{X,2},\theta(q))$ admits a compatible harmonic metric if $\pi$ is a two-sheeted branched covering by Li--Mochizuki \cite{LM1}. Otherwise, it has no such metric by \cite[Propositions~3.41 and~3.42]{LM3}.
 \item As we will see in Remark \ref{rem_classifying}, compatible harmonic metrics of $(\mathbb{K}_{\C,3},\theta(q))$ correspond to filtered Higgs bundles satisfying some condition, which is parameterized by a real number in $(2,(\deg{f}+3)/2]$. Therefore, the uniqueness of compatible harmonic metrics is not true.
 \item We need compatibility to prove the non-existence of harmonic metrics in Theorems~\ref{main} and~\ref{main_Cstar}. The author does not know whether there exists a harmonic metric if $(\mathbb{K}_{X,3},\theta(q))$ admits no compatible harmonic metric.
 \end{itemize}
\end{Remark}}

To prove Theorem \ref{main}, we use the following theorem proved by Li and Mochizuki. See Section~\ref{sec2} for detailed definitions.
\begin{Theorem}[\cite{LM1}]\label{correspond}
 Suppose that $X$ is compact. The following two objects are equivalent:
 \begin{itemize}\itemsep=0pt
 \item Wild harmonic bundles on $(X,D)$ compatible with a non-degenerate symmetric pairing.
 \item Good polystable filtered Higgs bundles on $(X,D)$ of degree 0 with a perfect symmetric pairing.
 \end{itemize}
\end{Theorem}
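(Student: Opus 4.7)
The plan is to derive Theorem \ref{correspond} from the Kobayashi--Hitchin correspondence for filtered Higgs bundles without a pairing, due to Biquard--Boalch \cite{BB} and Mochizuki \cite{Mo1}, by tracking the symmetric pairing $C$ through both directions of that correspondence.

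For the forward direction, given a wild harmonic bundle $(E,\theta,h)$ on $(X,D)$ compatible with $C$, I would construct the associated good filtered Higgs bundle by the standard norm-growth recipe: at each puncture in $D$, the weight-$a$ part of the parabolic sheaf consists of holomorphic sections whose $h$-norm is bounded by $|z|^{-a-\varepsilon}$ for every $\varepsilon>0$ in a local coordinate $z$ centered at the puncture. Polystability, the degree-zero property, and goodness of the filtered Higgs bundle then follow from the general theory of \cite{Mo1}. To see that $C$ becomes a perfect pairing on the filtered structure, the compatibility identity $|s|_h=|C(s,\cdot)|_{h^*}$ forces $C$ to send the weight-$a$ piece of the filtered $E$ isomorphically onto the weight-$(-a)$ piece of the filtered $E^*$, which is exactly perfectness.

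For the reverse direction, given a good polystable filtered Higgs bundle of degree $0$ with a perfect symmetric pairing $C$, I would apply \cite{BB, Mo1} to obtain a harmonic metric $h$ on $(E,\theta)$ adapted to the filtration. The key observation is that $h':=\Phi_C^*(h^*)$, the pullback of the dual metric under the isomorphism $\Phi_C\colon E\to E^*$ induced by $C$, is again a harmonic metric adapted to the filtration: $\Phi_C$ is a Higgs bundle isomorphism between $(E,\theta)$ and the transpose $(E^*,\theta^t)$ because $C(\id\otimes\theta)=C(\theta\otimes\id)$, and it preserves the parabolic filtrations by perfectness of $C$. By the uniqueness part of the filtered Kobayashi--Hitchin correspondence, $h$ and $h'$ differ by a polystable decomposition of the filtered Higgs bundle with positive rescaling constants on each summand. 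Since $C$ permutes the stable summands (pairing isomorphic ones), one can then choose a compatible rescaling so that the resulting harmonic metric is fixed by the involution $h\mapsto\Phi_C^*(h^*)$, giving compatibility with $C$.

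The hard step is this last construction in the reverse direction: producing a harmonic metric that is actually a fixed point of the $C$-induced involution, not merely related to its image by a polystable decomposition. This requires a linear-algebraic analysis on each isotypic component of the polystable filtered Higgs bundle, where $C$ restricts to a non-degenerate symmetric $\C$-bilinear form on the multiplicity space, which must be simultaneously diagonalized with the Hermitian form coming from $h$. Perfectness of $C$ is what guarantees that the resulting rescaling yields a genuine adapted parabolic metric on the filtered Higgs bundle rather than merely a Hermitian form on the underlying bundle, and that the naturality statements needed to make the correspondence an equivalence of categories hold.
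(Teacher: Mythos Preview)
The paper does not supply its own proof of this theorem: it is quoted from \cite{LM1}, and later restated with precise references as Lemma~\ref{lemma_real_str} (\cite[Lemma~3.16]{LM1}) and Theorem~\ref{correspond_thm} (\cite[Theorem~3.28]{LM1}). So there is no in-paper argument to compare your proposal against.

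That said, your outline is the correct strategy and is essentially the one carried out in \cite{LM1}. The forward direction is exactly their Lemma~3.16: compatibility gives $|s|_h=|\Phi_C(s)|_{h^*}$, so the norm-growth filtrations on $E$ and $E^*$ match under $\Phi_C$, which is perfectness. For the reverse direction, your plan---apply the Kobayashi--Hitchin correspondence of \cite{BB,Mo1,Si2} to get an adapted harmonic metric $h$, observe that $h'=\Phi_C^*(h^*)$ is another adapted harmonic metric, and invoke uniqueness---is the right one, and your identification of the delicate point (producing an actual fixed point of $h\mapsto\Phi_C^*(h^*)$ rather than a metric merely in the same uniqueness class) is accurate. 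In \cite{LM1} this is resolved as you suggest: on each isotypic block of the polystable decomposition the stable summand carries a unique adapted metric up to a positive scalar, and $C$ either fixes a summand (so one normalizes the scalar) or swaps two isomorphic summands (so one chooses the two scalars reciprocally). Your sketch is therefore sound; there is simply nothing in the present paper to set it beside.
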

By Theorem \ref{correspond}, it suffices to study when there exists a stable good filtered extension of $(\mathbb{K}_{X,3},\theta(q))$ such that $C_{X,3}$ extends to a perfect pairing. The proof of Theorem \ref{main} is outlined as follows.
\begin{itemize}\itemsep=0pt\setlength{\leftskip}{0.46cm}
 \item[{\bf Step 1}] We calculate the eigenvalues of the Higgs filed $\theta(q)$, and take sections $s_1$, $s_2$ and $s_3$ of~$\mathbb{K}_{\C,3}$, which form a frame around $\infty$ such that the representation of $\theta(q)$ is
 a Jordan canonical form with respect to $(s_1,s_2,s_3)$.
 \item[{\bf Step 2}] Using $(s_1,s_2,s_3)$, we construct a meromorphic extension of $\mathbb{K}_{\C,3}$ and a filtered bundle~$\mathcal{P}^{d}_{*}\mathbb{K}_{\C,3}$ over $\mathbb{K}_{\C,3}$ depending $d=(d_1,d_2,d_3)\in \R^3$ such that
 \begin{equation*}
 \mathcal{P}^{d}_{c}(\mathbb{K}_{\C,3})_{\infty} = \bigoplus_{i=1}^3\strsheaf_{\ps^1,\infty}([c-d_i]\infty)\cdot s_i,
 \end{equation*}
 where for $c\in\R$, $[c]$ is an integer satisfying $c-1<[c]\leq c$.
 \item[{\bf Step 3}] We find the condition for $d$ that $\bigl(\mathcal{P}^{d}_{*}\mathbb{K}_{\C,3},\theta(q)\bigr)$ is good and stable, and that the pairing~$C_{\C,3}$ extends to a perfect pairing. We can prove that if $\deg{f}\geq 2$, then there exists~$d\in\R^3$ satisfying the condition.
 \item[{\bf Step 4}] We prove that if a filtered Higgs bundle is a good filtered extension of $(\mathbb{K}_{\C,3},\theta(q))$ and the pairing induced by $C_{\C,3}$ is perfect, then there exists $d\in\R^3$ such that the filtered bundle is $\mathcal{P}^{d}_{*}(\mathbb{K}_{\C,3})$. We can prove that if $\deg{f}\leq 1$, then there is no harmonic metric of $(\mathbb{K}_{\C,3},\theta(q))$ compatible with $C$ from this.
\end{itemize}
In addition to Theorem \ref{main}, we obtain the classification of filtered Higgs bundles corresponding to compatible harmonic metrics of $(\mathbb{K}_{\C,3},\theta(q))$ in the case when $\pi\colon \Sigma_{\C,q}\to \C$ is a two-sheeted branched covering.
Moreover, the proof of Theorem \ref{main} can be partially generalized to the case of the Higgs bundle $(\mathbb{K}_{X,3},\theta(q))$ whose Higgs filed is meromorphic, where $X$ is any other quasi-projective curve.

The proof of Theorem \ref{main_Cstar} is outlined as follows.
\begin{itemize}\itemsep=0pt\setlength{\leftskip}{0.55cm}
 \item[{\bf Step 1$'$}] We see that there exists a harmonic metric compatible with $C_{\C^*,3}$ unless $f(z)= az^b$, $a\in \C^*$, $|b|\geq 3$, in the same way as Step 1, Step 2 and Step 3 in the outline of the proof of Theorem \ref{main}.
 \item[{\bf Step 2$'$}] We see that there does not exist any harmonic metric compatible with $C_{\C^*,3}$ if $f$ is constant in a manner analogous to Step 4 in the outline of the proof of Theorem \ref{main}.
 \item[{\bf Step 3$'$}] In the case of $f(z)=az^b$, $a\in \C^*$, $|b|\geq 3$, we explicitly construct a stable good filtered extension of $(\mathbb{K}_{X,3},\theta(q))$ such that the pairing induced by $C_{\C^*,3}$ become perfect.
\end{itemize}

In the general rank case, there are many cases about the multiplicities of the eigenvalues of the Higgs field. For example, the $r=4$ case is divided into the case when the two eigenvalues with the multiplicity 2 appear, the case when the one eigenvalue with the multiplicity 1 appears and the ones of the others are 1, etc. Because of this difficulty, the author has not yet obtained similar results for the general rank.

\section{Preliminaries}\label{sec2}
\subsection{Filtered Higgs bundles}
\subsubsection{Filtered bundles}
We will review the notion of filtered bundles, following \cite{Mo1,Si1,Si2}. In this paper, the notations are based on \cite{LM1}.
Let $X$ be a Riemann surface, and let $D$ be a discrete subset of $X$. We write $\mathcal{O}_{X}(*D)$ for the sheaf of meromorphic functions on $X$ whose poles are contained in $D$. For any sheaf $\mathcal{F}$, we write $\mathcal{F}_p$ for the stalk of $\mathcal{F}$ at $p\in X$.
\begin{Definition}
 A filtered bundle $\mathcal{P}_* E$ on $(X,D)$ is a locally free $\mathcal{O}_{X}(*D)$-module $E$ with a~tuple of filtrations $\{\mathcal{P}_{c} E_p\}_{c\in \R}$ of free $\strsheaf_{X,p}$-submodules of $E_p$ for $p\in D$ such that for $c$, $c_1$ and~$c_2\in \R$,
 \begin{enumerate}\itemsep=0pt
 \item[1)] $(\mathcal{P}_{c} E_p)(*p) = E_p$,
 \item[2)] $\mathcal{P}_{c_1} E_p \subset \mathcal{P}_{c_2} E_p$ if $c_1 \leq c_2$,
 \item[3)] $\mathcal{P}_{c} E_p =\bigcap_{a>c} (\mathcal{P}_{a} E_p)$,
 \item[4)] $\mathcal{P}_{c+1} E_p = \mathcal{P}_{c} E_p \otimes_{\mathcal{O}_{X,p}} \mathcal{O}_{X,p}(p)$.
 \end{enumerate}
 The rank of $\mathcal{P}_* E$ is defined to be the rank of $E$. We also say that $\mathcal{P}_* E$ is a filtered bundle over~$E$.
 We define $\mathcal{P}_{<c} E_p$ to be $\bigcup_{a<c} (\mathcal{P}_a E_p)$ and $\gr^{\mathcal{P}}_{c} (E_p)$ to be $\mathcal{P}_{c} E_p /\mathcal{P}_{<c} E_p$. Let $\pa(\mathcal{P}_{*} E_p)$ denote \smash{$\bigl\{c\in\R \mid \gr^{\mathcal{P}}_{c} (E_p)\neq 0\bigr\}$}.
\end{Definition}
For $c=\bigl(c^{(p)}\bigr)_{p\in D} \in \R^{D}$, let $\mathcal{P}_{c} E$ denote the locally free $\strsheaf_{X}$-submodule of $E$ such that if $p\in D$, $(\mathcal{P}_{c} E)_{p} = \mathcal{P}_{c^{(p)}} E_p$, otherwise $(\mathcal{P}_{c} E)_{p} = E_p$.
\begin{Definition}
 Let $\mathcal{P}_* E_1$ and $\mathcal{P}_* E_2$ be filtered bundles on $(X,D)$. A morphism $\varphi$ from $\mathcal{P}_* E_1$ to $\mathcal{P}_* E_2$ is a morphism of sheaves of $\mathcal{O}_{X}(*D)$-modules $\varphi\colon E_1\to E_2$ such that $\varphi(\mathcal{P}_{c} E_{1}) \subset \mathcal{P}_{c} E_{2}$ for any $c\in \R^{D}$.
\end{Definition}
Let $\mathcal{P}_{*}^{(0)} (\strsheaf_{X} (*D))$ denote $\strsheaf_{X}(*D)$ with filtrations defined by
\[
\mathcal{P}_{c}^{(0)} (\strsheaf_{X}(*D)) = \strsheaf_{X} \biggl(\sum_{p\in D} \bigl[c^{(p)}\bigr]p\biggr).
\]
Let $\mathcal{P}_* E_1$ and $\mathcal{P}_* E_2$ be filtered bundles. We can define some filtered bundles induced by $\mathcal{P}_* E_1$ and $\mathcal{P}_* E_2$.
\begin{itemize}\itemsep=0pt
 \item Direct sum
$\mathcal{P}_{c}(E_1\oplus E_2)_p = \mathcal{P}_{c}E_{1,p} \oplus \mathcal{P}_{c}E_{2,p}$.
 \item Tensor product
$\mathcal{P}_{c}(E_1\otimes E_2)_p = \sum_{a+b\leq c} (\mathcal{P}_{a}E_{1,p} \otimes \mathcal{P}_{b}E_{2,p})$.
 \item Hom
$\mathcal{P}_{c}(\sheafhom(E_1,E_2))_p = \{f\in \Hom_{\strsheaf_{X,p}}(E_{1,p},E_{2,p})\mid f(\mathcal{P}_{a}E_{1})\subset\mathcal{P}_{a+c}E_{2} \}$.
\end{itemize}
We write $\mathcal{P}_{*}E_1\oplus \mathcal{P}_{*}E_2$, $\mathcal{P}_{*}E_1\otimes \mathcal{P}_{*}E_2$ and $\sheafhom(\mathcal{P}_{*} E_1,\mathcal{P}_{*} E_2)$ for $\mathcal{P}_{*}(E_1\oplus E_2)$, $\mathcal{P}_{*}(E_1\otimes E_2)$, and $\mathcal{P}_{*}(\sheafhom(E_1,E_2))$. We define the dual bundle of $\mathcal{P}_{*} E_{1}$ as $\sheafhom\bigl(\mathcal{P}_{*} E_1,\mathcal{P}_{*}^{(0)} (\strsheaf_{X} (*D))\bigr)$.

\subsubsection{Filtered Higgs bundles}
\begin{Definition}
 Suppose that $X$ is compact.
 The degree $\deg{\mathcal{P}_{*}E}$ of a filtered bundle $\mathcal{P}_{*} E$ on~$(X,D)$ is
 \[
 \deg{\mathcal{P}_{*}E} = \deg{\mathcal{P}_{c}E} - \sum_{p\in D}\sum_{c^{(p)}-1<a\leq c^{(p)}} a\dim\gr_{a}^{\mathcal{P}} (E_p)
 \]
for $c\in\R^D$. We write $\mu(\mathcal{P}_{*} E)$ for $\deg{\mathcal{P}_{*} E}/\rk{E}$, which is called the slope of $\mathcal{P}_{*} E$.
\end{Definition}

Let $\mathcal{P}_{*} E$ be a filtered bundle on $(X,D)$. Let $E'$ be a locally free $\strsheaf_X(*D)$-submodule of $E$ such that $E/E'$ is also locally free. Then the filtered bundles $\mathcal{P}_{*} E'$ and $\mathcal{P}_{*} (E/E')$ are induced as follows: for $c\in \R^D$,
\begin{equation*}
 \mathcal{P}_{c}E' = (\mathcal{P}_{c}E) \cap E',\qquad \mathcal{P}_{c}(E/E') = \Image\bigl(\mathcal{P}_{c}E \to E/E'\bigr).
\end{equation*}
\begin{Lemma}\label{degree_proof}
 Suppose that $X$ is compact. Then the following holds:
 \begin{equation*}
 \deg{\mathcal{P}_{*} E} = \deg{\mathcal{P}_{*} E'} + \deg{\mathcal{P}_{*} \bigl(E/E'\bigr)}.
 \end{equation*}
\end{Lemma}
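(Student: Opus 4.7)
The plan is to fix a single cutoff $c\in\R^D$ and apply the defining formula of degree to all three filtered bundles using this common choice. If the short exact sequence of $\strsheaf_X(*D)$-modules $0\to E'\to E\to E/E'\to 0$ can be promoted, for every real number $a$ and every $p\in D$, to an exact sequence
\[
0\to\mathcal{P}_{a}E'_p\to\mathcal{P}_{a}E_p\to\mathcal{P}_{a}(E/E')_p\to 0,
\]
then additivity of the classical degree of locally free $\strsheaf_X$-sheaves together with additivity of the dimensions of the graded pieces will give the lemma.

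The main step is therefore to verify the displayed exact sequence. By definition $\mathcal{P}_{a}E'_p = \mathcal{P}_{a}E_p\cap E'_p$, which is the kernel of the composition $\mathcal{P}_{a}E_p\hookrightarrow E_p\twoheadrightarrow (E/E')_p$, and the image of this composition inside $(E/E')_p$ is exactly $(\mathcal{P}_{a}E_p + E'_p)/E'_p = \mathcal{P}_{a}(E/E')_p$ by the very definition of the induced quotient filtration. Hence the sequence is exact. Replacing $a$ by $b<a$ and passing to the union over all such $b$ (using property~3 of the definition, so that $\mathcal{P}_{<a}$ is computed by the union) yields the analogous exact sequence with $\mathcal{P}_{<a}$ in place of $\mathcal{P}_{a}$, since filtered colimits preserve exactness.

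From the two exact sequences, a snake-lemma (or direct dimension count) argument produces the pointwise identity
\[
\dim\gr^{\mathcal{P}}_{a}E_p = \dim\gr^{\mathcal{P}}_{a}E'_p + \dim\gr^{\mathcal{P}}_{a}(E/E')_p
\]
for every $a\in\R$ and $p\in D$. Specialising the first exact sequence to the cutoff $c$ gives a short exact sequence of locally free $\strsheaf_X$-sheaves on the compact curve $X$,
\[
0\to\mathcal{P}_{c}E'\to\mathcal{P}_{c}E\to\mathcal{P}_{c}(E/E')\to 0,
\]
whose classical degree is additive. Summing the graded-piece identity weighted by $a$ over $p\in D$ and $a\in\bigl(c^{(p)}-1,c^{(p)}\bigr]$, and combining with this additivity of $\deg\mathcal{P}_{c}$, yields the claimed formula.

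No serious obstacle is expected; the only mildly delicate point is the compatibility of two different prescriptions (intersection for subobjects, image for quotients) with passage to filtered pieces, and this is precisely what the displayed exact sequence encodes.
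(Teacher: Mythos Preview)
Your proof is correct and follows essentially the same approach as the paper: establish the exact sequence $0\to\mathcal{P}_{a}E'_p\to\mathcal{P}_{a}E_p\to\mathcal{P}_{a}(E/E')_p\to 0$ at each level (the paper packages this and the $\mathcal{P}_{<a}$ version into a $3\times 3$ diagram and applies the nine lemma, while you argue directly from the definitions and pass to a filtered colimit for $\mathcal{P}_{<a}$), then combine additivity of the classical degree with additivity of $\dim\gr^{\mathcal{P}}_a$. One small slip: that $\mathcal{P}_{<a}$ is the union $\bigcup_{b<a}\mathcal{P}_b$ is its \emph{definition}, not property~3 (which is the right-continuity condition $\mathcal{P}_c=\bigcap_{a>c}\mathcal{P}_a$).
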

\begin{proof}
 Though this lemma is well known, we provide proof for the convenience of the reader.
 By the short exact sequence
$ 0\longrightarrow \mathcal{P}_{c}E' \longrightarrow \mathcal{P}_{c}E \longrightarrow \mathcal{P}_{c}\bigl(E/E'\bigr) \longrightarrow 0$,
 we obtain
 \begin{equation*}
 \deg{\mathcal{P}_{c}E} = \deg{\mathcal{P}_{c}E'} + \deg{\mathcal{P}_{c}\bigl(E/E'\bigr)}.
 \end{equation*}
 In addition, for $a\in\R$, we obtain the following commutative diagram:
 \begin{equation*}
 \begin{CD}
 @. 0 @. 0 @. 0 \\
 @. @VVV @VVV @VVV \\
 0 @>>> \mathcal{P}_{<a}E'_p @>>> \mathcal{P}_{<a}{E}_p @>>> \mathcal{P}_{<a}(E/E')_p @>>> 0 \\
 @. @VVV @VVV @VVV \\
 0 @>>> \mathcal{P}_{a}E'_p @>>> \mathcal{P}_{a}E_p @>>> \mathcal{P}_{a}(E/E')_p @>>> 0 \\
 @. @VVV @VVV @VVV \\
 0 @>>> \gr^{\mathcal{P}}_{a}\bigl(E'_p\bigr) @>>> \gr^{\mathcal{P}}_{a}(E_p) @>>> \gr^{\mathcal{P}}_{a}(E/E')_p @>>> 0. \\
 @. @VVV @VVV @VVV \\
 @. 0 @. 0 @. 0 \\
 \end{CD}
 \end{equation*}
 Since all columns and the top two rows are exact, the bottom row is exact. Therefore, we obtain $\dim{\gr^{\mathcal{P}}_{a}(E_p)} = \dim{\gr^{\mathcal{P}}_{a}\bigl(E'_p\bigr)} + \dim{\gr^{\mathcal{P}}_{a}(E/E')_p}$.
 As a result, we obtain $\deg{\mathcal{P}_{*} E} = \deg{\mathcal{P}_{*} E'} + \deg{\mathcal{P}_{*} (E/E')}$.
\end{proof}

We review the notion of filtered Higgs bundles and define the notion of stability of filtered Higgs bundles.
\begin{Definition}
 A filtered Higgs bundle on $(X,D)$ is a pair of a filtered bundle $\mathcal{P}_{*} E$ on $(X,D)$ and a morphism $\theta\colon E\to E\otimes_{\strsheaf_{X}} K_{X}$. The morphism $\theta$ is called a Higgs field.
\end{Definition}

\begin{Definition}
 Suppose that $X$ is compact.
 A filtered Higgs bundle $(\mathcal{P}_{*} E,\theta)$ on $(X,D)$ is stable (resp. semistable) if for any
 proper filtered Higgs subbundles $(\mathcal{P}_{*} E',\theta')$ of positive rank of $(\mathcal{P}_{*} E,\theta)$,
 \begin{equation*}
 \mu(E) > \mu\bigl(E'\bigr)\qquad \bigl(\text{resp.}\ \mu(E) \geq \mu\bigl(E'\bigr)\bigr).
 \end{equation*}
 A filtered Higgs bundle $(\mathcal{P}_{*} E,\theta)$ is polystable if $(\mathcal{P}_{*} E,\theta)$ is a direct sum of stable filtered Higgs bundles with the same slopes.
\end{Definition}
\begin{Definition}
 A filtered Higgs bundle $(\mathcal{P}_{*} E,\theta)$ on $(X,D)$ is called regular if $\theta$ is logarithmic with respect to the filtrations, that is, $\theta(\mathcal{P}_{c} E_p)\subset \mathcal{P}_{c+1} E_p\otimes K_{X,p}$ for any $p\in D$ and any $c\in \R$.
\end{Definition}
Let $\varphi_m(w)=w^m$ be a ramified covering. We define a filtration of $\varphi_m ^{*}E_p$ to be
\begin{equation*}
 \mathcal{P}_{c}\bigl(\varphi_m ^{*} E_p\bigr) = \sum_{ma+n\leq c} \varphi_m ^{*}\bigl(\mathcal{P}_{a} E_p\bigr)\otimes \strsheaf_{X,p}(np)
\end{equation*}
for $c\in\R$.
\begin{Definition}
 A filtered Higgs bundle $(\mathcal{P}_{*} E,\theta)$ on $(X,D)$ is called good if for any $p\in D$ there exists a ramified covering $\varphi_{p,m}(z_{p,m})=z_{p,m}^m$ such that $\bigl(\mathcal{P}_{*}\bigl(\varphi_{p,m}^{*}E_p\bigr),\varphi_{p,m}^{*}\theta\bigr)$ has a decomposition
 \begin{equation*}
 \bigl(\mathcal{P}_{*}\bigl(\varphi_{p,m}^{*}E_p\bigr),\varphi_{p,m}^{*}\theta\bigr) = \bigoplus_{\mathfrak{a} \in z_{p,m}^{-1}\mathbb{C}[z_{p,m}^{-1}]} (\mathcal{P}_{*} E_{\mathfrak{a},p},\theta_{\mathfrak{a}}),
 \end{equation*}
 which satisfies that $\theta_{\mathfrak{a}} - \mathrm{d}\mathfrak{a}\, \id$ is logarithmic at $p$ with respect to the lattices $\mathcal{P}_{b}E_{\mathfrak{a},p}$.
\end{Definition}

\subsection{Non-degenerate symmetric pairings}
We introduce the notion of symmetric pairings of Higgs bundles in this section by following \cite{LM1} and theorems used in a later section.
\subsubsection{Symmetric pairings on Higgs bundles}
\begin{Definition}
 Let $(E,\theta)$ be a Higgs bundle. A holomorphic symmetric pairing $C$ of $E$ is called a symmetric pairing of $(E,\theta)$ if $C$ satisfies $C(\theta\otimes\id)= C(\id\otimes\theta)$. If the symmetric pairing~$C$ is non-degenerate on any fibers, then $C$ is called non-degenerate.
\end{Definition}

Let us recall the notion of compatibility of a non-degenerate symmetric pairing and a Hermitian metric of a $\C$-vector space of finite dimension. Let $V$ be a $\C$-vector space of finite dimension. A non-degenerate symmetric pairing $C$ and a Hermitian metric $h$ of $V$ induce the isomorphism $\Phi_C\colon V\to V^{*}$, $v\mapsto (w\mapsto C(w,v))$ and the antilinear map $\Phi_h\colon V\to V^{*}$, $v\mapsto (w\mapsto h(w,v))$, respectively. For any non-degenerate symmetric pairing $V$, we write $C^{*}$ for the symmetric pairing of $V^{*}$ induced by $C$. For any Hermitian metric $h$ of $V$, we write $h^{*}$ for the Hermitian metric of~$V^{*}$ induced by $h$.
\begin{Definition}
 Let $C$ and $h$ be a non-degenerate symmetric pairing and a Hermitian metric of $V$, respectively. The Hermitian metric $h$ is said to be compatible with $C$ if the isomorphism $\Phi_C\colon V\to V^{*}$ is an isometry with respect to $h$ and $h^{*}$.
\end{Definition}
Suppose that a Hermitian metric $h$ is compatible with a non-degenerate symmetric pairing~$C$, a real structure on $V$, that is, an antilinear involution $V\to V$ is induced as follows: $h^{*}$ induces the antilinear map $\Phi_{h^{*}}\colon V^{*}\to V^{**}=V$. Then $\kappa \coloneqq \Phi_{h^{*}}\circ \Phi_{C}\colon V\to V$ is an antilinear map. By the compatibility, we obtain $\Phi_{C^*}\circ \Phi_{h} = \Phi_{h^*}\circ \Phi_{C}$. Thus, ${\kappa}^{-1} = \Phi_{C^*}\circ \Phi_{h} = \kappa$ holds, and $\kappa$ gives a real structure on $V$.

\begin{Definition}
 Let $(E,\theta,h)$ be a harmonic bundle on $X$. A non-degenerate symmetric pairing~$C$ of $(E,\theta)$ is called a real structure if $h|_p$ is compatible with $C|_p$ for any $p\in X$, where~$h|_p$ and $C|_p$ denote the Hermitian metric and the symmetric pairing of the fiber~$E|_p$ induced by~$h$ and $C$, respectively. In this case, $h$ is said to be compatible with $C$.
\end{Definition}

\subsubsection{Symmetric pairings on filtered bundles}
\begin{Definition}
 Let $\mathcal{P}_{*}E$ be a filtered bundle on $(X,D)$. A symmetric pairing $C$ of $\mathcal{P}_{*}E$ is a~morphism $C\colon \mathcal{P}_{*}E\otimes \mathcal{P}_{*}E\to \mathcal{P}_{*} \bigl(\strsheaf_{X}^{(0)} (*D)\bigr)$ which is symmetric.
\end{Definition}
A symmetric pairing $C$ on $\mathcal{P}_{*}E$ induces a morphism $\Phi_C\colon \mathcal{P}_{*}E\to \mathcal{P}_{*}E^{*}$.
\begin{Definition}
 A symmetric pairing $C$ of a filtered bundle $\mathcal{P}_{*}E$ is called perfect if the morphism $\Phi_C\colon \mathcal{P}_{*}E\to\mathcal{P}_{*}E^{*}$ is isomorphism.
\end{Definition}
We define the notion of symmetric pairing of a filtered Higgs bundle.
\begin{Definition}
 Let $(\mathcal{P}_{*}E,\theta)$ be a good filtered Higgs bundle. A symmetric pairing~$C$ of~$(\mathcal{P}_{*}E,\theta)$ is a symmetric pairing of $\mathcal{P}_{*}E$ such that $C(\theta\otimes\id)=C(\id\otimes\theta)$.
\end{Definition}

We review the notion of wildness of harmonic bundles by following \cite{Mo1}.
\begin{Definition}
 Let $(E,\theta,h)$ be a harmonic bundle on $X\setminus D$. For $p\in D$, the morphism $f_p\colon E|_{U_p\setminus\{p\}}\to E|_{U_p \setminus\{p\}}$ is defined by $\theta = f_p {\rm d}z_p/z_p$, where $(U_p,z_p)$ is a complex chart centered at~$p$. The harmonic bundle $(E,\theta,h)$ is called wild on $(X,D)$ if all the coefficients of the characteristic polynomial $\det(t\,\id- f_p)$ are meromorphic on $U_p$ for any $p\in D$.
\end{Definition}
Let $(E,\theta,h)$ be a wild harmonic bundle on $(X,D)$. Then a filtered Higgs bundle $\bigl(\mathcal{P}_{*}^{h} E,\theta\bigr)$ is induced by $(E,\theta,h)$ as follows: for $a\in\R$ and $p\in D$,
\begin{equation}\label{induced_wild_harmonic_bundle}
 \mathcal{P}_{a}^{h} E_p = \{s\in \iota_{*}(E)_p \mid |s|_h = O(|z_p|^{-a-\varepsilon})\ \text{for any $\varepsilon>0$} \},
\end{equation}
where $\iota\colon X\setminus D\to X$ is the inclusion and $z_p$ is a holomorphic coordinate centered at $p$. In~fact, $\bigl(\mathcal{P}_{*}^{h} E,\theta\bigr)$ is a good filtered Higgs bundle \cite{Mo1}. The following theorem is a generalization of Theorem~\ref{Hitchin_thm1} to the case of wild harmonic bundles.
\begin{Theorem}[Simpson \cite{Si2}, Biquard--Boalch \cite{BB} and Mochizuki \cite{Mo1}]\label{Hitchin_thm2}
 Suppose that $X$ is compact. For a wild harmonic bundle $(E,\theta,h)$ on $(X,D)$, the induced filtered Higgs bundle $\bigl(\mathcal{P}_{*}^{h} E,\theta\bigr)$ is polystable and of degree $0$. Conversely, if a good filtered Higgs bundle $(\mathcal{P}_{*} E,\theta)$ on $(X,D)$ is polystable and $\deg{\mathcal{P}_{*}E}=0$, then the Higgs bundle $\bigl(E|_{X\setminus D},\theta\bigr)$ on $X\setminus D$ has a harmonic metric such that $\mathcal{P}_{*}^h \bigl(E|_{X\setminus D}\bigr) =\mathcal{P}_{*}E $. Moreover, if $h_1$ and $h_2$ are harmonic metrics of $(E,\theta)$ which satisfy $\mathcal{P}_{*}^{h_1} E = \mathcal{P}_{*}^{h_2} E$, then there is a decomposition as in Theorem $\ref{Hitchin_thm1}$.
\end{Theorem}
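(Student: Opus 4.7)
The plan is to split the theorem into three statements: (i) any wild harmonic bundle induces a polystable filtered Higgs bundle of degree $0$; (ii) any polystable good filtered Higgs bundle of degree $0$ admits a harmonic metric inducing the given filtration; (iii) uniqueness up to the stated orthogonal decomposition. The first part is local-at-the-punctures plus a global Chern--Weil computation, the second is the hard nonlinear existence result, and the third follows from a Bochner-type identity.

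For (i), I would first analyze $h$ near each $p\in D$. Wildness gives a formal decomposition $\varphi_{p,m}^{*}\theta=\bigoplus_{\mathfrak{a}}\theta_{\mathfrak{a}}$ with $\theta_{\mathfrak{a}}-{\rm d}\mathfrak{a}\cdot\id$ tame, and the harmonic equation forces this decomposition to be asymptotically $h$-orthogonal; on each tame factor the Simpson--Biquard prolongation yields an asymptotic expansion of $h$ whose leading exponents are exactly the parabolic weights. This shows $(\mathcal{P}_{*}^{h}E,\theta)$ is good. To obtain $\deg\mathcal{P}_{*}^{h}E=0$, I would run Chern--Weil on $X\setminus D$: since $\Tr F_{\nabla_h}=-\Tr[\theta,\theta^{*h}]=0$, a limiting argument over shrinking disks around $D$ converts $\int c_1(\mathcal{P}_c E)$ into boundary terms that precisely reproduce the correction $\sum_{p}\sum_{c-1<a\leq c}a\dim\gr_{a}^{\mathcal{P}}(E_p)$. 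For polystability, given a filtered Higgs subbundle $\mathcal{P}_{*}E'\subset\mathcal{P}_{*}E$ of slope $\geq 0$, let $\pi$ be the $h$-orthogonal projection onto $E'|_{X\setminus D}$; the parabolic Chern--Weil inequality for $\pi$ gives $\mu(\mathcal{P}_{*}E')\leq\mu(\mathcal{P}_{*}E)=0$, and equality forces $\pi$ to be $(\nabla_h+\theta+\theta^{*h})$-parallel, yielding the splitting.

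For (ii), reducing to the stable case, I would choose an initial metric $h_0$ adapted to $\mathcal{P}_{*}E$: near each puncture, in a frame compatible with the good decomposition, take $h_0$ diagonal with entries $|z_p|^{-2b_i}$ where the $b_i$ are the prescribed parabolic weights. Then $F_{\nabla_{h_0}}+[\theta,\theta^{*h_0}]$ is controlled in a weighted sense, and one seeks $h=h_0\cdot s$ solving Hitchin's equation by the nonlinear heat flow or by minimizing Donaldson's functional. The critical analytic step is uniform control of $\sup|\log s|+\sup|\log s^{-1}|$: if it failed along a sequence, a weak-limit argument would produce a nontrivial $\theta$-invariant sub-object of slope $\geq 0$, and the key refinement over the compact case is that the adapted choice of $h_0$ forces this sub-object to be a genuine filtered Higgs subbundle of $\mathcal{P}_{*}E$, contradicting stability. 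With the $C^0$ bound in hand, elliptic bootstrapping gives smooth convergence to a harmonic $h$, and comparison of $h$ with $h_0$ near the punctures yields $\mathcal{P}_{*}^{h}E=\mathcal{P}_{*}E$ via definition~(\ref{induced_wild_harmonic_bundle}).

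Part (iii) is short: for two harmonic metrics $h_1,h_2$ inducing the same filtration, set $s=h_1^{-1}h_2$; Hitchin's equation gives $\Delta\log\Tr(s)\geq 0$, and the hypothesis bounds $\Tr(s)$ on $X\setminus D$, so the maximum principle forces $s$ to be parallel for $\nabla_{h_1}+\theta+\theta^{*h_1}$, and its eigenspace decomposition supplies the desired direct sum. The main obstacle is clearly the $C^0$ estimate in part~(ii): the extracted destabilizing object must respect both the parabolic filtration at each puncture and the formal decomposition supplied by wildness, and combining Simpson's interior technique with the precise asymptotic control from the good filtered structure of Biquard--Boalch and Mochizuki is where the hard analysis lies.
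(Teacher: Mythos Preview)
The paper does not actually prove this theorem. Theorem~\ref{Hitchin_thm2} is stated as a known result and attributed to Simpson~\cite{Si2}, Biquard--Boalch~\cite{BB}, and Mochizuki~\cite{Mo1}; no argument is given in the paper beyond the citation. It plays the role of a black box that the author invokes (together with Theorem~\ref{correspond_thm}) to reduce the existence question for compatible harmonic metrics to a purely algebraic question about good polystable filtered extensions.

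Your outline is a reasonable high-level sketch of how the cited references proceed, and the three-way split (local asymptotics plus Chern--Weil for polystability and degree; Donaldson-type heat flow with a $C^0$ estimate for existence; Bochner/maximum principle for uniqueness) is the right architecture. But since the paper offers no proof of its own, there is nothing here to compare your approach against: you are sketching the contents of \cite{Si2,BB,Mo1}, not of the present paper. If your goal is to supply a proof where the paper gives none, be aware that what you have written is an outline rather than a proof; each of the three steps, and especially the $C^0$ estimate in~(ii) that you correctly flag as the crux, requires substantial analysis that is genuinely carried out only in the cited works.
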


\begin{Lemma}[{\cite[Lemma 3.16]{LM1}}]\label{lemma_real_str}
 Let $(E,\theta,h)$ be a wild harmonic bundle. If $(E,\theta,h)$ has a real structure $C$, then $C$ induces a perfect symmetric pairing of the filtered Higgs bundle $\bigl(\mathcal{P}_{*}^{h} E,\theta\bigr)$.
\end{Lemma}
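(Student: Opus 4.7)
The statement reduces to two assertions: (i) $C$ defines a morphism $\mathcal{P}_{*}^{h} E \otimes \mathcal{P}_{*}^{h} E \to \mathcal{P}_{*}^{(0)}(\strsheaf_{X}(*D))$ of filtered bundles; (ii) the induced map $\Phi_C\colon \mathcal{P}_{*}^{h} E \to \sheafhom\bigl(\mathcal{P}_{*}^{h} E, \mathcal{P}_{*}^{(0)}(\strsheaf_{X}(*D))\bigr)$ is an isomorphism of filtered bundles. The Higgs condition $C(\theta \otimes \id) = C(\id \otimes \theta)$ is inherited from the Higgs bundle $(E,\theta)$, so only the filtered behavior requires verification.

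For~(i), I work locally at $p \in D$ with coordinate $z_p$. The compatibility of $h$ with $C$ is precisely the statement that $\Phi_C\colon (E, h) \to (E^{*}, h^{*})$ is a pointwise isometry, which together with Cauchy--Schwarz yields the pointwise estimate
\[
 |C(s_1, s_2)| = |\Phi_C(s_2)(s_1)| \leq |\Phi_C(s_2)|_{h^{*}} \cdot |s_1|_{h} = |s_1|_{h} \cdot |s_2|_{h}.
\]
For $s_i \in \mathcal{P}_{a_i}^{h} E_{p}$, definition~\eqref{induced_wild_harmonic_bundle} gives $|s_i|_{h} = O\bigl(|z_p|^{-a_i - \varepsilon}\bigr)$ for every $\varepsilon > 0$, hence $|C(s_1, s_2)| = O\bigl(|z_p|^{-a_1 - a_2 - \varepsilon}\bigr)$ for every $\varepsilon > 0$. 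Since $C(s_1, s_2)$ is meromorphic near $p$, its order of pole at $p$ is then at most $[a_1 + a_2]$, so $C(s_1, s_2) \in \strsheaf_{X, p}([a_1 + a_2] p) = \mathcal{P}_{a_1 + a_2}^{(0)}(\strsheaf_{X}(*D))_{p}$, proving~(i).

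For~(ii), the underlying $\strsheaf_X(*D)$-linear map $\Phi_C\colon E \to E^{*}$ is already an isomorphism by non-degeneracy of $C$. Because $\Phi_C$ is a fiberwise isometry and both sides carry filtrations defined by growth rates of the respective metrics, $\Phi_C$ identifies the growth-rate filtered bundles $\mathcal{P}_{*}^{h} E$ and $\mathcal{P}_{*}^{h^{*}} E^{*}$. It therefore suffices to match $\mathcal{P}_{*}^{h^{*}} E^{*}$ with the algebraic dual filtered bundle $\sheafhom\bigl(\mathcal{P}_{*}^{h} E, \mathcal{P}_{*}^{(0)}(\strsheaf_{X}(*D))\bigr)$. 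The inclusion $\mathcal{P}_{*}^{h^{*}} E^{*} \subset \sheafhom\bigl(\mathcal{P}_{*}^{h} E, \mathcal{P}_{*}^{(0)}(\strsheaf_{X}(*D))\bigr)$ is precisely~(i) read through $\Phi_C$; the reverse inclusion is a standard duality for parabolic filtrations of wild harmonic bundles (the parabolic structure associated with the dual harmonic bundle equals the algebraic dual parabolic structure), available from Mochizuki's asymptotic analysis~\cite{Mo1}. The main obstacle is this latter duality, which rests on the wild norm estimates; once invoked, perfectness follows immediately.
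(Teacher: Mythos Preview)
The paper does not supply its own proof of this lemma: it is quoted verbatim as \cite[Lemma~3.16]{LM1} and used as a black box. So there is no ``paper's proof'' to compare against; your proposal is an attempt to prove the cited result directly.

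Your argument is sound. Part~(i) is exactly right: compatibility of $h$ with $C$ says $\Phi_C$ is an isometry, Cauchy--Schwarz gives $|C(s_1,s_2)|\le |s_1|_h|s_2|_h$, and the growth bound forces the holomorphic function $C(s_1,s_2)$ on the punctured disc to be meromorphic with pole order at most $[a_1+a_2]$. Part~(ii) correctly factors the problem: the isometry $\Phi_C$ tautologically identifies the metric-growth filtered bundle $\mathcal{P}_*^{h}E$ with $\mathcal{P}_*^{h^*}E^*$, and what remains is the identification of $\mathcal{P}_*^{h^*}E^*$ with the algebraic dual $\sheafhom\bigl(\mathcal{P}_*^{h}E,\mathcal{P}_*^{(0)}(\strsheaf_X(*D))\bigr)$. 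You are right that this last step is the substantive one and that it is part of Mochizuki's asymptotic package for wild harmonic bundles~\cite{Mo1}; one small point worth making explicit is that $(E^*,-{}^t\theta,h^*)$ is again a wild harmonic bundle, so that $\mathcal{P}_*^{h^*}E^*$ is indeed defined and the duality theorem applies. With that caveat, your reduction is complete and matches how the result is proved in~\cite{LM1}.
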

Suppose that $X$ is compact. By Lemma \ref{lemma_real_str} and Theorem \ref{Hitchin_thm2}, we see that if a wild harmonic bundle $(E,\theta,h)$ on $(X,D)$ has a real structure $C$, $(E,\theta,h)$ induces the good polystable filtered Higgs bundle $\bigl(\mathcal{P}_{*}^{h} E,\theta\bigr)$ of degree $0$ and $\bigl(\mathcal{P}_{*}^{h} E,\theta\bigr)$ has the induced perfect symmetric pairing. In~fact, the converse is true, that is, the following holds.
\begin{Theorem}[{\cite[Theorem 3.28]{LM1}}]\label{correspond_thm}
 The following two objects are equivalent:
 \begin{itemize}\itemsep=0pt
 \item Wild harmonic bundles on $(X,D)$ with a real structure.
 \item Good polystable filtered Higgs bundles on $(X,D)$ of degree $0$ with a perfect symmetric pairing.
 \end{itemize}
\end{Theorem}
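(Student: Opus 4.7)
The plan is to establish the two implications of Theorem \ref{correspond_thm} separately, with Theorem \ref{Hitchin_thm2} and Lemma \ref{lemma_real_str} as the central inputs.

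The forward direction is essentially a repackaging of earlier results: given a wild harmonic bundle $(E,\theta,h)$ on $(X,D)$ with a real structure $C$, I would apply Theorem \ref{Hitchin_thm2} to see that the induced $\bigl(\mathcal{P}_{*}^{h}E,\theta\bigr)$ is a good polystable filtered Higgs bundle of degree $0$, and then invoke Lemma \ref{lemma_real_str} to conclude that $C$ extends to a perfect symmetric pairing on this filtered Higgs bundle.

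For the backward direction, start with a good polystable filtered Higgs bundle $(\mathcal{P}_{*}E,\theta)$ of degree $0$ with a perfect symmetric pairing $C$. By Theorem \ref{Hitchin_thm2} there exists a harmonic metric $h$ on $\bigl(E|_{X\setminus D},\theta\bigr)$ with $\mathcal{P}_{*}^{h}E=\mathcal{P}_{*}E$. The perfectness of $C$ gives an isomorphism of filtered Higgs bundles $\Phi_{C}\colon \mathcal{P}_{*}E\to \mathcal{P}_{*}E^{*}$, so pulling back the dual harmonic metric $h^{*}$ through $\Phi_{C}$ produces a second harmonic metric $h'$ on $(E,\theta)$ with $\mathcal{P}_{*}^{h'}E=\mathcal{P}_{*}E$. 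The uniqueness part of Theorem \ref{Hitchin_thm2} then yields a decomposition of $(E,\theta)$, orthogonal simultaneously for $h$ and $h'$, on each factor of which $h'$ is a positive multiple of $h$. To upgrade this to compatibility with $C$ I would refine the above using the stable decomposition $(\mathcal{P}_{*}E,\theta)=\bigoplus_{\alpha}(\mathcal{P}_{*}V_{\alpha},\theta_{\alpha})\otimes \C^{m_{\alpha}}$. The isomorphism $\Phi_{C}$ matches the polystable type of $E$ with that of $E^{*}$, inducing an involution $\sigma$ on the index set with $m_{\alpha}=m_{\sigma(\alpha)}$, and $C$ splits into blocks pairing $V_{\alpha}\otimes \C^{m_{\alpha}}$ with $V_{\sigma(\alpha)}\otimes \C^{m_{\sigma(\alpha)}}$. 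The moduli of harmonic metrics on a polystable Higgs bundle is parametrized by positive Hermitian forms on the multiplicity spaces $\C^{m_{\alpha}}$ (with positive scalar ambiguity on each $V_{\alpha}$), and the goal is to choose these data so that $C$ becomes a real structure fibrewise.

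The main obstacle is this last step, which reduces to a finite-dimensional statement: given a non-degenerate symmetric pairing $C$ and a Hermitian metric $h$ on a finite-dimensional complex vector space, find a modification of $h$ making it compatible with $C$. I would handle this by considering the antilinear operator $\kappa=\Phi_{h^{*}}\circ \Phi_{C}$, observing that $\kappa^{2}$ is an $h$-self-adjoint positive $\C$-linear operator, and absorbing its square root into the metric, so that $\kappa$ becomes the desired involution. The subtlety is that on a $\sigma$-fixed stable component the restriction of $C$ could a priori be antisymmetric and so obstruct a positive rescaling; ruling this out uses the global symmetry of $C$ together with the fact that $\Phi_{C}$ intertwines the polystable types. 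Once this linear-algebraic adjustment is performed consistently across all $\sigma$-orbits, the rescaled metric is compatible with $C$, furnishing the required real structure on the wild harmonic bundle produced by Theorem \ref{Hitchin_thm2}.
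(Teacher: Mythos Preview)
The paper does not prove this theorem: it is quoted verbatim from \cite[Theorem~3.28]{LM1}. The paragraph immediately preceding the statement notes that the forward implication follows from Theorem~\ref{Hitchin_thm2} and Lemma~\ref{lemma_real_str}, exactly as you do, and then says ``In fact, the converse is true'' and cites \cite{LM1} for the full result. So there is no proof in the paper to compare your backward direction against.

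Your sketch of the converse is the standard strategy and is along the lines of what \cite{LM1} does: produce a harmonic metric via Theorem~\ref{Hitchin_thm2}, pull back the dual metric through $\Phi_{C}$, invoke uniqueness to reduce to a finite-dimensional adjustment on the multiplicity spaces, and then solve the linear-algebra problem using the positive square root of $\kappa^{2}$. One point to be more careful about: on a $\sigma$-fixed stable summand $V_{\alpha}\otimes\C^{m_{\alpha}}$, the pairing $C$ need not factor with a symmetric form on $V_{\alpha}$; it can factor as (antisymmetric on $V_{\alpha}$) $\otimes$ (antisymmetric on $\C^{m_{\alpha}}$) and still be globally symmetric. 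Your phrase ``ruling this out uses the global symmetry of $C$'' is therefore not quite right; rather, in that case one must choose a Hermitian form on $\C^{m_{\alpha}}$ compatible with the antisymmetric form there, which is still possible. This is handled carefully in \cite{LM1}, but since the present paper only quotes the result, the comparison you were asked for is essentially vacuous for the converse direction.
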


\section{Main results}
We study harmonic metrics for $(\mathbb{K}_{\C,3},\theta(q))$ and $(\mathbb{K}_{\C^*,3},\theta(q))$ which are not generically regular semisimple. We will prove the followings in this section.
\begin{Theorem}\label{main'}
 Let $\Sigma_{\C,q}$ denote the spectral curve of $(\mathbb{K}_{\C,3},\theta(q))$. If the natural projection $\pi\colon \Sigma_{\C,q}\to \C$ is a one- or two-sheeted branched covering, then there exists a polynomial $f\in \C[z]$ such~that
 \begin{equation*}
 q_2 = 3\cdot 2^{-5/3}f^2 ({\rm d}z)^2, \qquad q_3 = f^3 ({\rm d}z)^3.
 \end{equation*}
 If $\deg{f}\geq 2$, then there exists a harmonic metric of $(\mathbb{K}_{\C,3},\theta(q))$ compatible with $C_{\C,3}$. Otherwise, there does not exist any harmonic metric compatible with $C_{\C,3}$.
\end{Theorem}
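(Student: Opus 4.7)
The plan is to follow the four-step outline given in the introduction and to apply Theorem \ref{correspond} on the compactification $(\ps^1,\{\infty\})$. First I would compute the characteristic polynomial of $\theta(q)$, which is $t^3 - 2q_2 t - q_3$, and observe that $\pi$ is at most two-sheeted precisely when the discriminant $32q_2^3 - 27q_3^2$ vanishes identically. A short factorization argument in $\C[z]$ then forces the stated form $q_2 = 3\cdot 2^{-5/3} f^2 ({\rm d}z)^2$, $q_3 = f^3 ({\rm d}z)^3$ for some $f\in\C[z]$, the one-sheeted case being $f \equiv 0$. At each point where $f\neq 0$ the Higgs field has a simple eigenvalue $2^{2/3}f$ and a double eigenvalue $-2^{-1/3}f$ forming a single $2{\times}2$ Jordan block, as one checks directly from the companion-like shape of $\theta(q)$.

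Next, on a neighborhood of $\infty$ in $\ps^1$ lying outside a disc containing all zeros of $f$, I would diagonalize the block structure of $\theta(q)$ to produce meromorphic sections $s_1,s_2,s_3$ of $\mathbb{K}_{\C,3}$ with respect to which $\theta(q)$ is in Jordan canonical form. Declaring $(s_1,s_2,s_3)$ to be a local frame at $\infty$ yields a meromorphic extension of $\mathbb{K}_{\C,3}$ to $\ps^1$, and assigning a weight $d_i$ to $s_i$ produces the family of filtered extensions $\mathcal{P}^{d}_{*}\mathbb{K}_{\C,3}$ described in Step~2 of the introduction.

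By Theorem \ref{correspond} applied to $(\ps^1,\{\infty\})$, it suffices to find $d\in\R^3$ for which $(\mathcal{P}^{d}_{*}\mathbb{K}_{\C,3},\theta(q))$ is a good polystable filtered Higgs bundle of degree $0$ and $C_{\C,3}$ extends to a perfect pairing. Goodness at $\infty$ is checked by pulling back under a square-root ramified cover and exhibiting the formal decomposition into the eigenvalue branches dictated by the Jordan structure. The perfect-pairing condition identifies $s_1$ with $s_3$ (and $s_2$ with itself) under $\Phi_{C_{\C,3}}$, imposing a linear constraint on $d$, and imposing $\deg\mathcal{P}^{d}_{*}\mathbb{K}_{\C,3}=0$ adds a further linear equation. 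Stability is then an open inequality on the single remaining real parameter; the slope computation should show that this inequality is solvable if and only if $\deg f \geq 2$, giving existence in that range.

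For the converse, my strategy is a rigidity theorem (Step~4): any good filtered extension of $(\mathbb{K}_{\C,3},\theta(q))$ on $(\ps^1,\{\infty\})$ carrying a perfect extension of $C_{\C,3}$ must coincide with some $\mathcal{P}^{d}_{*}\mathbb{K}_{\C,3}$. Goodness constrains the filtration at $\infty$ to be compatible with the formal decomposition coming from the Jordan structure, which pins the lattice down to the choice of weights, while perfection of the pairing prescribes how those weights pair under $\Phi_{C_{\C,3}}$. Once rigidity is in hand, the non-existence when $\deg f \leq 1$ reduces to checking that no admissible $d$ simultaneously achieves degree zero and the stability inequality, which follows from the same slope computation as above. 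I expect the main obstacle to be precisely this goodness and rigidity analysis at $\infty$: because the double eigenvalue is confluent, a ramified cover is required to display the formal decomposition, and one must carefully track the interaction of the Jordan frame with the pairing $C_{\C,3}$ in order to make both Step~3 and Step~4 rigorous.
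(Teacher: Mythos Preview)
Your overall strategy matches the paper's, but you have the pairing structure at $\infty$ backwards, and this error propagates into every later computation. Since $C(\theta\otimes\id)=C(\id\otimes\theta)$ and the eigenvalues $\lambda_1=2^{2/3}f$, $\lambda_2=-2^{-1/3}f$ are distinct wherever $f\neq 0$, the generalized eigenspaces $E_1=\Ker(\theta-\lambda_1)$ and $E_3=\Ker(\theta-\lambda_2)^2$ are $C$-orthogonal: a direct check in the frame $({\rm d}z,1,({\rm d}z)^{-1})$ gives $C(s_1,s_1)\neq 0$, $C(s_1,s_2)=C(s_1,s_3)=0$, $C(s_2,s_2)=0$, $C(s_2,s_3)\neq 0$. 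Thus $\Phi_{C_{\C,3}}$ pairs $s_1$ with \emph{itself} and $s_2$ with $s_3$, not the other way around. Perfection of $C$ therefore pins down $d_1=-\ord_\infty(\omega)=\deg f+2$ outright, while on $E_3$ it imposes $d_2+d_3=\deg f+2$. The paper further replaces $s_3$ by $v_3=s_3-\tfrac{2C(s_3,s_3)}{C(s_2,s_3)}s_2$ so that $C(v_3,v_3)=0$ as well; without this normalization the weight-by-weight description of $\mathcal{P}^d_*E_3$ is not clean and the rigidity argument in Step~4 does not go through. With the corrected pairing, goodness becomes $d_2-1\leq d_3$ and stability becomes $-\deg f+(d_3-d_2)+2<0$; these are simultaneously solvable iff $\deg f\geq 2$, as you anticipate.

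Two smaller points. No ramified cover is needed for goodness at $\infty$: both eigenvalues are already single-valued (polynomials in $z$ times ${\rm d}z$), so the required decomposition is simply $E_1\oplus E_3$ with $m=1$. And in Step~4 the substantive work is not the formal splitting---that comes for free from goodness once $\omega$ has a pole at $\infty$---but showing that every good filtration on $E_3$ for which $\Tilde{C}$ is perfect is necessarily diagonal in the frame $(v_2,v_3)$; the paper handles this by a case analysis on whether $d_3-d_2\in\Z$, using the induced nondegenerate pairings on the graded pieces $\gr^{\mathcal{P}}_c(E_{3,\infty})$.
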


\begin{Theorem}\label{main_Cstar'}
 Let $\Sigma_{\C^*,q}$ denote the spectral curve of $(\mathbb{K}_{\C^*,3},\theta(q))$.
 If the natural projection $\pi\colon \Sigma_{\C^*,q}\to \C^*$ is a one- or two-sheeted branched covering, then there exists $f\in \C\bigl[z,z^{-1}\bigr]$ such that
 \begin{equation*}
 q_2 = 3\cdot 2^{-5/3}f^2 ({\rm d}z/z)^2, \qquad q_3 = f^3 ({\rm d}z/z)^3.
 \end{equation*}
 Unless $f$ is constant, there exists a harmonic metric of $(\mathbb{K}_{\C^*,3},\theta(q))$ compatible with $C_{\C^*,3}$.
\end{Theorem}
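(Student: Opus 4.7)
The plan is to transport the strategy of Theorem \ref{main'} to the compactification $(\ps^1,\{0,\infty\})$, with $0$ and $\infty$ both acting as singular points. The shape of $(q_2,q_3)$ follows from the spectral hypothesis: the characteristic polynomial of $\theta(q)$ is $t^3-2q_2 t-q_3$, so the one- or two-sheetedness of $\pi$ forces its discriminant $32 q_2^3-27 q_3^2$ to vanish identically on $\C^*$. A short algebraic argument in $\C[z,z^{-1}]$ then shows this relation forces $q_3=f^3(\mathrm{d}z/z)^3$ and $q_2=3\cdot 2^{-5/3} f^2(\mathrm{d}z/z)^2$ for a unique (up to cube root of unity) $f\in\C[z,z^{-1}]$. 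It remains to produce a compatible harmonic metric whenever $f$ is non-constant.

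When $f$ is non-constant and not an exceptional monomial $az^b$ with $|b|\geq 3$, I would carry out Steps $1'$ and $2'$ of the introduction along the lines of the proof of Theorem \ref{main'}. Near each of the two punctures the eigenvalues of $\theta(q)$ are $-\alpha,-\alpha,2\alpha$ with $\alpha=2^{-1/3}f\,\mathrm{d}z/z$, so one picks local frames $s_1,s_2,s_3$ of $\mathbb{K}_{\C^*,3}$ around $0$ and $\infty$ putting $\theta(q)$ into Jordan normal form. These frames produce a family of filtered extensions $\mathcal{P}^{d}_*\mathbb{K}_{\C^*,3}$ on $(\ps^1,\{0,\infty\})$ indexed by weight data $d=(d^{(0)},d^{(\infty)})\in\R^3\times\R^3$. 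Repeating the inequality analysis of Step~3 in this two-puncture setting, I would identify $d$ for which $(\mathcal{P}^{d}_*\mathbb{K}_{\C^*,3},\theta(q))$ is good, of degree zero, stable, and on which $C_{\C^*,3}$ extends to a perfect pairing; the presence of two punctures affords enough slack in these inequalities that no analogue of the bound $\deg f\geq 2$ is needed, beyond excluding $f$ constant. Theorem \ref{correspond} then produces the harmonic metric.

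The main obstacle is the remaining monomial case $f(z)=az^b$ with $a\in\C^*$ and $|b|\geq 3$, in which the symmetry between $0$ and $\infty$ correlates the admissible weights too tightly for any $d$ in the family above to yield a stable extension. The plan for this case (Step~$3'$) is to construct a stable good filtered extension of $(\mathbb{K}_{\C^*,3},\theta(q))$ by hand, using a tailored, asymmetric choice of lattices at $0$ and $\infty$ that does not come from the family $\mathcal{P}^{d}_*$, and to verify goodness, degree zero, stability, and perfection of the pairing directly from the construction. I expect the hardest step to be arranging the parabolic weights of this bespoke extension so that the stability inequality and the perfection condition for $C_{\C^*,3}$ hold simultaneously; once this is in place, Theorem \ref{correspond} again yields the harmonic metric.
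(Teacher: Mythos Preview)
Your overall strategy matches the paper's, but you have the exceptional monomial cases reversed, and this is a genuine gap in your inequality analysis rather than a mere labeling slip. (The paper's own introductory outline contains the same typo: its Step~$3'$ reads $|b|\geq 3$, whereas the actual proof in Section~3.3 treats $b=1,2$.)

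The issue is that for \emph{any} nonzero monomial $f=az^b$ on $\C^*$, the family $\mathcal{P}^d_*\Tilde{E}$ built as in Proposition~\ref{main_prop} can never be stable. That proposition already requires $q_2$ to have a zero in $X$, which monomial $f$ lacks; but even waiving this, the numerics fail. Since $\omega=f\,{\rm d}z/z$ has no zero in $\C^*$, Lemma~\ref{deg1} gives $\deg\mathcal{P}_*E_1=\deg\mathcal{P}_*E_3=0$, so $E_1$ is already a degree-zero Higgs subbundle of a degree-zero $\Tilde{E}$. Concretely, the stability inequality of Proposition~\ref{main_prop} with $g=0$ becomes
\[
\sum_{p\in\{0,\infty\}}\bigl(d_3^{(p)}-d_2^{(p)}\bigr) + 2 < 0,
\]
while goodness (Lemma~\ref{good}) forces $d_3^{(p)}-d_2^{(p)}\geq -1$ at each puncture; these are incompatible. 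So your claim that ``two punctures afford enough slack'' fails exactly for the low-degree monomials $|b|\in\{1,2\}$ that you placed in the easy case.

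Conversely, the monomials with $|b|\geq 3$ are not the obstacle. For $b\geq 1$ the Higgs bundle on $\C^*$ extends holomorphically across $0$ to one on $\C$ with polynomial $\tilde f(z)=az^{b-1}$; Corollary~\ref{main_cor} then supplies a compatible harmonic metric on $\C$ (hence on $\C^*$ by restriction) as soon as $\deg\tilde f=b-1\geq 2$, and $b\leq -3$ follows by $z\leftrightarrow 1/z$. What genuinely requires the bespoke construction of your Step~$3'$ plan is $|b|\in\{1,2\}$, and the paper carries it out in Propositions~\ref{prop_Cstar_b2} and~\ref{prop_Cstar_b1} by choosing a frame at $0$ that deliberately does \emph{not} respect the splitting $E_1\oplus E_3$: violating Assumption~\ref{assump} at $0$ is precisely what lets $\deg\mathcal{P}_*E_1$ and $\deg\mathcal{P}_*E_3$ become negative. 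Your Step~$3'$ idea of an asymmetric, non-split lattice is therefore exactly right---just applied to $|b|\in\{1,2\}$ rather than $|b|\geq 3$.
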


\subsection{Existence of harmonic bundles}\label{subsection_existence}
We consider $(\mathbb{K}_{X,3},\theta(q))$ for a generalization to other Riemann surfaces.
Let $\overline{X}$ be a compact Riemann surface and $D$ be a finite subset of $\overline{X}$. Let $X=\overline{X}\setminus D$.
We hereafter write ${(E,\theta) = (\mathbb{K}_{X,3},\theta(q))}$ and $C=C_{X,3}$.

Let $\Sigma_{E,\theta}$ be the spectral curve of $(E,\theta)$. Suppose that the natural projection $\Sigma_{E,\theta}\to X$ is a~one- or two-sheeted branched covering.
For $p\in\overline{X}$, let $\bigl(U_p,z_p\bigr)$ be a complex chart centered at $p$. In terms of local frame $({\rm d}z_p,1,({\rm d}z_p)^{-1})$, we can write
\begin{equation}\label{matrix}
 \theta \bigl({\rm d}z_p,1,\bigl({\rm d}z_p\bigr)^{-1}\bigr) = \bigl({\rm d}z_p,1,\bigl({\rm d}z_p\bigr)^{-1}\bigr) \begin{pmatrix}
 0 & q_{2,p} & q_{3,p}\\
 1 & 0 & q_{2,p} \\
 0 & 1 & 0
 \end{pmatrix} {\rm d}z_p,
\end{equation}
where $q_{2,p}$ and $q_{3,p}$ are meromorphic functions on $U_p$ which possibly have poles at $p$.
\begin{Lemma}\label{meromorphic_form}
 There exists a meromorphic $1$-form $\omega$ on $\overline{X}$ such that
 \begin{equation*}
 q_2=3\cdot 2^{-5/3}\omega^2,\qquad q_3=\omega^3.
 \end{equation*}
 Moreover, the spectral curve $\Sigma_{E,\theta}\subset K_X$ is defined as the image of the sections
 \begin{equation*}
 \lambda_1\coloneqq2^{2/3}\omega,\qquad\lambda_2\coloneqq -2^{-1/3}\omega.
 \end{equation*}
 In particular, the projection $\Sigma_{E,\theta}\to X$ is a one-sheeted branched covering if and only if $\omega\equiv 0$.
\end{Lemma}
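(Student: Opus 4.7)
The plan is to derive an algebraic identity between $q_2$ and $q_3$ from the hypothesis, extract a meromorphic $1$-form $\omega$, and then factor the characteristic polynomial of $\theta$. First, a direct expansion of the matrix in (\ref{matrix}) shows that
\[
\det\bigl(t\cdot\id_E - \theta\bigr) = t^3 - 2 q_2\cdot t - q_3,
\]
so the spectral curve $\Sigma_{E,\theta}$ is cut out in $K_X$ by this cubic in the fiber coordinate $t$. The projection $\pi$ being one- or two-sheeted is equivalent to the cubic having a repeated root at every point, i.e.\ to the identical vanishing of its discriminant,
\[
32\, q_2^3 - 27\, q_3^2 \equiv 0 \qquad \text{in } H^0\bigl(\overline{X},K_{\overline{X}}^6(*D)\bigr).
\]

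Next, I would construct $\omega$ from this identity. If $q_2\equiv 0$, the discriminant relation forces $q_3\equiv 0$, and I set $\omega:=0$. Otherwise, I define
\[
\omega := \frac{3}{2^{5/3}}\cdot\frac{q_3}{q_2},
\]
which is intrinsically a meromorphic section of $K_{\overline{X}}^3 \otimes \bigl(K_{\overline{X}}^2\bigr)^{-1} \cong K_{\overline{X}}$, hence a meromorphic $1$-form on all of $\overline{X}$. Using the discriminant relation $27\, q_3^2 = 32\, q_2^3$, a short direct computation then yields $\omega^2 = (2^{5/3}/3)\, q_2$ and $\omega^3 = q_3$, which are the claimed identities.

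Finally, substituting these expressions for $q_2$ and $q_3$ into the cubic and factoring, I would verify
\[
t^3 - 2 q_2\, t - q_3 = \bigl(t - 2^{2/3}\omega\bigr)\bigl(t + 2^{-1/3}\omega\bigr)^2,
\]
which exhibits $\Sigma_{E,\theta}$ as the union of the images of the two sections $\lambda_1 = 2^{2/3}\omega$ and $\lambda_2 = -2^{-1/3}\omega$ of $K_X$. The projection is one-sheeted precisely when these sections coincide with the zero section, i.e.\ when $\omega\equiv 0$. The only nontrivial conceptual point is the global coherence of $\omega$, ensured by reading $q_3/q_2$ intrinsically as a meromorphic section of $K_{\overline{X}}$ rather than as a locally defined ratio; everything else reduces to elementary algebraic verifications.
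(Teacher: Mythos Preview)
Your proof is correct and follows the same overall line as the paper's: both derive the discriminant relation $32\,q_2^3=27\,q_3^2$ from the repeated-root hypothesis, then produce $\omega$ and factor the cubic. The one genuine difference is in how $\omega$ is constructed: the paper works locally, asserting (somewhat implicitly) the existence of a meromorphic $f_p$ on each chart with $q_{2,p}=3\cdot 2^{-5/3}f_p^2$ and $q_{3,p}=f_p^3$, and then glues the $f_p\,{\rm d}z_p$ into a global form; you instead write down the global formula $\omega=3\cdot 2^{-5/3}\,q_3/q_2$ as a meromorphic section of $K_{\overline X}$. Your route is cleaner in that it sidesteps any ambiguity in choosing local roots and makes the gluing automatic, while the paper's phrasing leaves the reader to fill in why a single-valued $f_p$ exists---which in practice amounts to your formula anyway. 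Either way the remaining steps (the algebraic verification of $\omega^2$, $\omega^3$ and the factorization $(t-2^{2/3}\omega)(t+2^{-1/3}\omega)^2$) are identical.
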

\begin{proof}
 Since the characteristic polynomial of the matrix (\ref{matrix}) has a multiple root, we obtain $-4(-2q_{2,p})^3-27(q_{3,p})^2=0$ by considering the discriminant. Thus, there exists a meromorphic function $f_p$ on $U_p$ such that
 \begin{equation*}
 q_{2,p}=3\cdot 2^{-5/3}f_p^2,\qquad q_{3,p}=f_p^3.
 \end{equation*}
 Moreover, there is a meromorphic 1-form $\omega$ on $\overline{X}$ such that $\omega=f_p {\rm d}z_p$ for all $p\in \overline{X}$. Then ${q_2=3\cdot 2^{-5/3}\omega^2}$ and $q_3=\omega^3$. The spectral curve $\Sigma_{E,\theta}$ is given by calculation of the eigenvalues~of~(\ref{matrix}).
\end{proof}

We assume $\omega \not\equiv 0$.
For $p\in \overline{X}$, we define
\begin{equation*}
 \ord_p(\omega) = \begin{cases}
 0 & \text{if $f_p$ is holomorphic on $U_p$ and $f_p(p)\neq 0$,} \\
 k & \text{if $f_p$ has a zero of order $k$ at $p$,} \\
 -k & \text{if $f_p$ has a pole of order $k$ at $p$.}
 \end{cases}
\end{equation*}

Let $s^{(p)}_1$, $s^{(p)}_2$ and $s^{(p)}_3$ be local sections on $U_p$ defined as follows: with respect to the frame \smash{$\bigl({\rm d}z_p,1,\bigl({\rm d}z_p\bigr)^{-1}\bigr)$},
\begin{equation}\label{sections}
 s^{(p)}_1 = \begin{pmatrix}
 5\cdot 2^{-5/3}f_p^2 \\
 2^{2/3} f_p \\
 1
 \end{pmatrix},\qquad
 s^{(p)}_2 = \begin{pmatrix}
 -2^{-5/3}f_p^2 \\
 -2^{-1/3} f_p \\
 1
 \end{pmatrix},\qquad
 s^{(p)}_3 = \begin{pmatrix}
 -2^{2/3}f_p \\
 1 \\
 0
 \end{pmatrix}.
\end{equation}
Then we obtain
\begin{equation*}
 \theta\bigl(s_1^{(p)},s_2^{(p)},s_3^{(p)}\bigr) = \bigl(s_1^{(p)},s_2^{(p)},s_3^{(p)}\bigr)\begin{pmatrix}
 \lambda_1 & 0 & 0\\
 0 & \lambda_2 & {\rm d}z_p \\
 0 & 0 & \lambda_2
 \end{pmatrix}.
\end{equation*}
We define a locally free $\strsheaf_{\overline{X}}(*D)$-module $\tilde{E}$ such that $\tilde{E}|_{U_p} = \mathcal{O}_{U_p}(*p)s^{(p)}_1+\mathcal{O}_{U_p}(*p)s^{(p)}_2+\smash{\mathcal{O}_{U_p}(*p)s^{(p)}_3}$ for $p\in D$. Then the Higgs field $\theta$ and the pairing $C$ extend to the morphism $\Tilde{\theta}\colon \Tilde{E}\to\Tilde{E}\otimes K_{\overline{X}}$ and the pairing $\Tilde{C}\colon \Tilde{E}\otimes \Tilde{E}\to \strsheaf_{\overline{X}}(*D)$, respectively. Let $E_1$, $E_2$ and $E_3$ be~subbundles of $\Tilde{E}$ defined as follows:
\begin{equation*}
 E_1 = \Ker\bigl(\Tilde{\theta} -\lambda_1 \id_{\Tilde{E}}\bigr), \qquad E_2 = \Ker\bigl(\Tilde{\theta} -\lambda_2 \id_{\Tilde{E}}\bigr), \qquad E_3 = \Ker\bigl(\Tilde{\theta} - \lambda_2 \id_{\Tilde{E}}\bigr)^2.
\end{equation*}
Moreover, let $E_4$ be the smallest subbundle of $\Tilde{E}$ such that $E_1\subset E_4$ and $E_2\subset E_4$.

\begin{Lemma}
 All nontrivial Higgs subbundles of the meromorphic Higgs bundle $\bigl(\Tilde{E},\Tilde{\theta}\bigr)$ are only $E_1$, $E_2$, $E_3$ and $E_4$.
\end{Lemma}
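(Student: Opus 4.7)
The plan is to classify all $\Tilde{\theta}$-invariant subbundles of $\Tilde{E}$ by reducing to a pointwise question on the open dense subset $U\subset X$ where $\omega$ is holomorphic and nowhere vanishing. On $U$, the frame $\bigl(s_1^{(p)},s_2^{(p)},s_3^{(p)}\bigr)$ from \eqref{sections} expresses $\Tilde{\theta}$ in block form with one simple eigenvalue $\lambda_1$ on $s_1^{(p)}$ and a single $2\times 2$ Jordan block with eigenvalue $\lambda_2\neq\lambda_1$ on $\bigl(s_2^{(p)},s_3^{(p)}\bigr)$. My strategy is to use this Jordan form to classify invariant subbundles generically, then promote the classification to all of $\overline{X}$ via the meromorphic extension.

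For a rank-$1$ Higgs subbundle $F\subset\Tilde{E}$, I would expand a local generator $\sigma = as_1^{(p)} + bs_2^{(p)} + cs_3^{(p)}$ of $F$ in the frame and impose $\Tilde{\theta}(\sigma) = \mu\sigma$ for some local $1$-form $\mu$. Using
\[
\Tilde{\theta}(\sigma) = a\lambda_1 s_1^{(p)} + (b\lambda_2 + c\,{\rm d}z_p)s_2^{(p)} + c\lambda_2 s_3^{(p)},
\]
together with $\lambda_1\neq\lambda_2$ and ${\rm d}z_p\neq 0$ on $U$, one checks that either $(b,c)=(0,0)$, whence $F=E_1$, or $(a,c)=(0,0)$, whence $F=E_2$. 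For a rank-$2$ subbundle $F$, the local endomorphism $\Tilde{\theta}|_F/{\rm d}z_p$ has eigenvalues drawn from $\{\lambda_1/{\rm d}z_p, \lambda_2/{\rm d}z_p\}$ with multiplicities bounded by those in $\Tilde{E}$. If both $\lambda_1$ and $\lambda_2$ occur, then $F$ contains the $\lambda_1$-eigenline $E_1$ and the $\lambda_2$-eigenline $E_2$, so $F\supset E_4$ and $F=E_4$ by rank; otherwise $\lambda_2$ occurs with multiplicity two, so $F\subset E_3$ and $F=E_3$ by rank.

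Since $F$ and each $E_i$ are locally free $\mathcal{O}_{\overline{X}}(*D)$-modules on $\overline{X}$, each is determined by its restriction to the dense open subset $U\subset\overline{X}$. Hence $F$ coincides globally with the corresponding $E_i$, completing the classification.

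The main obstacle I expect is not the algebra, which is mechanical once the Jordan form is in hand, but the verification that the pointwise identification on $U$ extends correctly across zeros of $\omega$ in $X$ and across the points of $D$, where the Jordan structure may degenerate and the frame $\bigl(s_1^{(p)},s_2^{(p)},s_3^{(p)}\bigr)$ may collapse. The meromorphic structure of $\Tilde{E}$ handles this: a locally free subsheaf of a meromorphic bundle on $\overline{X}$ is uniquely determined by its restriction to any dense open subset, so no spurious invariant subbundles can arise from degeneration at these special points.
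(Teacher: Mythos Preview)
Your proposal is correct and follows essentially the same approach as the paper: restrict to the dense open set where $\omega$ is holomorphic and nonvanishing, use the Jordan form (equivalently, the decomposition $\Tilde{E}|_U = E_1|_U \oplus E_3|_U$) to classify invariant subbundles there, and then extend by density. The paper's version is terser---it phrases the generic classification as a rank count on $E'\cap E_1$ and $E'\cap E_3$ rather than your explicit rank-$1$/rank-$2$ case split---but the content is the same.
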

\begin{proof}
 Let $U= \overline{X}\setminus \{p\in X\mid \omega_p=0 \}$. There exists a decomposition
 \begin{equation*}
 \bigl(\Tilde{E},\Tilde{\theta}\bigr)\big|_{U} = (E_1,\theta_1)|_{U}\oplus (E_3,\theta_3)|_{U}.
 \end{equation*}
 If $E'$ is a nontrivial Higgs subbundle of $\bigl(\Tilde{E},\Tilde{\theta}\bigr)$, then $\bigl(E'\cap E_1\bigr)|_{U}$ is a subbundle of $E_1|_{U}$ and $\bigl(E'\cap E_3\bigr)|_{U}$ is a subbundle of $E_3|_{U}$. The ranks of $\bigl(E'\cap E_1\bigr)|_{U}$ and $\bigl(E'\cap E_3\bigr)|_{U}$ completely determine $E'$, and $E'$ is one of $E_1$, $E_2$, $E_3$ or $E_4$.
\end{proof}

Suppose that $\mathcal{P}_*\Tilde{E}$ is a filtered bundle over $\Tilde{E}$ and $\Tilde{C}$ is a perfect symmetric pairing of $\bigl(\mathcal{P}_*\Tilde{E},\Tilde{\theta}\bigr)$.~The filtered bundle $\mathcal{P}_*E_i$ over $E_i$ is induced by $\mathcal{P}_*E_i = \mathcal{P}_*\Tilde{E} \cap E_i$. We assume the following.
\begin{Assumption}\label{assump}
 For any $p\in D$, there exists a decomposition
 \begin{equation*}
 \mathcal{P}_*\bigl(\Tilde{E}_p\bigr) = \mathcal{P}_*\bigl(E_{1,p}\bigr) \oplus \mathcal{P}_*\bigl(E_{3,p}\bigr).
 \end{equation*}
\end{Assumption}

\begin{Lemma}\label{pole_condition}
 Suppose that $\omega$ has a pole at $p\in D$. If the filtered Higgs bundle $\bigl(\mathcal{P}_*\Tilde{E},\Tilde{\theta}\bigr)|_{U_p}$ is good, then there exists a decomposition
 \begin{equation*}
 \mathcal{P}_*\bigl(\Tilde{E}_p\bigr) = \mathcal{P}_*\bigl(E_{1,p}\bigr) \oplus \mathcal{P}_*\bigl(E_{3,p}\bigr).
 \end{equation*}
 In particular, if any point of $D$ is a pole of $\omega$ and $\bigl(\mathcal{P}_*\Tilde{E},\Tilde{\theta}\bigr)$ is good, then the filtered bundle $\mathcal{P}_*\Tilde{E}$ satisfies Assumption~$\ref{assump}$.
\end{Lemma}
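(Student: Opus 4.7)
My plan is to pull back to a ramified covering where goodness provides a formal decomposition, align that decomposition with the generalized eigenspace splitting $\tilde{E}_p = E_{1,p} \oplus E_{3,p}$ coming from the distinct eigenvalues $\lambda_1,\lambda_2$, and then descend the resulting splitting by Galois invariance. The essential input from the hypothesis that $\omega$ has a pole at $p$ is that $\lambda_1-\lambda_2 = 3\cdot 2^{-1/3}\omega$ again has a pole at $p$, so the ``polar data'' of the two eigenvalues disagree and force them into different summands of the good decomposition.

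First, I would use the explicit local frame $\bigl(s_1^{(p)},s_2^{(p)},s_3^{(p)}\bigr)$ and the Jordan form in Section~\ref{subsection_existence} to record that
\begin{equation*}
 E_{1,p} = \mathcal{O}_{U_p,p}(*p)\cdot s_1^{(p)}, \qquad E_{3,p} = \mathcal{O}_{U_p,p}(*p)\cdot s_2^{(p)}+\mathcal{O}_{U_p,p}(*p)\cdot s_3^{(p)}
\end{equation*}
are $\tilde{\theta}$-invariant and satisfy $\tilde{E}_p = E_{1,p}\oplus E_{3,p}$ as $\mathcal{O}_{U_p,p}(*p)$-modules. Then, by goodness at $p$, I would choose a ramified covering $\varphi = \varphi_{p,m}$ and a decomposition
\begin{equation*}
 (\mathcal{P}_*(\varphi^*\tilde{E}_p),\varphi^*\tilde{\theta}) = \bigoplus_{\mathfrak{a}} (\mathcal{P}_*\tilde{E}_{\mathfrak{a},p},\theta_\mathfrak{a}),
\end{equation*}
with $\theta_\mathfrak{a}-{\rm d}\mathfrak{a}\cdot\id$ logarithmic. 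Taking $m$ sufficiently large, I write the irregular parts of $\varphi^*\lambda_1$ and $\varphi^*\lambda_2$ as ${\rm d}\mathfrak{a}_1$ and ${\rm d}\mathfrak{a}_2$ for uniquely determined $\mathfrak{a}_i\in z_{p,m}^{-1}\mathbb{C}[z_{p,m}^{-1}]$.

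The main step is to identify each $\varphi^*E_{i,p}$ with the sum of those summands $\tilde{E}_{\mathfrak{a},p}$ whose label $\mathfrak{a}$ lies in the Galois orbit of $\mathfrak{a}_i$. On one hand, $\varphi^*\tilde{\theta}-{\rm d}\mathfrak{a}_i\cdot\id$ is logarithmic on $\varphi^*E_{i,p}$ (from the Jordan form of $\theta$ with eigenvalue $\lambda_i$); on the other, by uniqueness of the irregular part in the good decomposition, any $\tilde{E}_{\mathfrak{a},p}$ whose irregular part matches ${\rm d}\mathfrak{a}_i$ must lie inside $\varphi^*E_{i,p}$. Since $\omega$ has a pole at $p$, the orbits of $\mathfrak{a}_1$ and $\mathfrak{a}_2$ are disjoint: in the irregular case this is immediate from $\mathfrak{a}_1-\mathfrak{a}_2$ being the nonzero irregular part of $\int \varphi^*(\lambda_1-\lambda_2)$, while in the tame case ($\omega$ simple pole with residue $r$) both $\mathfrak{a}_i$ vanish and one must instead separate by the distinct residues $2^{2/3}r$ and $-2^{-1/3}r$, using the standard fact that a logarithmic filtered Higgs structure splits along generalized eigenspaces of the residue when those eigenvalues are distinct. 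This yields
\begin{equation*}
 \mathcal{P}_*(\varphi^*\tilde{E}_p) = \mathcal{P}_*(\varphi^*E_{1,p}) \oplus \mathcal{P}_*(\varphi^*E_{3,p}).
\end{equation*}

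Finally, since this decomposition on the right is the pullback of a decomposition already defined on $U_p$, it is $\mathbb{Z}/m$-equivariant, and taking Galois invariants gives the desired $\mathcal{P}_*(\tilde{E}_p)=\mathcal{P}_*(E_{1,p})\oplus\mathcal{P}_*(E_{3,p})$; the ``in particular'' clause then follows by applying this at each $p\in D$. The hard part will be the matching step: goodness as stated only separates distinct irregular parts, so the tame (simple pole) case requires the extra residue-based separation argument, which is where I expect the main subtlety.
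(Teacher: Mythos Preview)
Your proposal is correct and follows the same two-case split as the paper: separate the situation into ``irregular'' (pole order $\ge 2$) versus ``tame'' (simple pole), use the good/irregular decomposition in the first case and a residue argument in the second. The paper's proof is considerably shorter, however, because it exploits a simplification you do not use: since $\lambda_1=2^{2/3}\omega$ and $\lambda_2=-2^{-1/3}\omega$ are already meromorphic on $U_p$, one may take $m=1$ in the definition of goodness. The good decomposition then lives on $U_p$ itself, so your ramified pullback, Galois-orbit matching, and descent step are all unnecessary. In the irregular case the paper simply says ``by definition of good filtered Higgs bundle'': the summands $E_{\mathfrak a,p}$ of any good decomposition are forced to be $E_{1,p}$ and $E_{3,p}$ because those are the unique $\tilde\theta$-invariant pieces with irregular parts ${\rm d}\mathfrak a_1$, ${\rm d}\mathfrak a_2$. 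In the tame case the paper makes your ``standard fact'' explicit: writing $\theta=\psi\,{\rm d}w/w$, the residue $\mathrm{Res}(\psi)$ acts on $V=\mathcal{P}_0\tilde E|_p$ preserving the filtration $F_aV=\Image(\mathcal{P}_a\tilde E_p\to V)$, so its generalized eigenspace decomposition (for the distinct eigenvalues $2^{2/3}r$, $-2^{-1/3}r$) is compatible with $F_\ast$, which yields the desired splitting of $\mathcal{P}_\ast\tilde E_p$. Your route is more general-purpose; the paper's is tailored to the fact that no ramification is needed here.
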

\begin{proof}
 If $\ord_p(\omega) \geq 2$, then we obtain the decomposition by definition of good filtered Higgs bundle. Suppose that $\ord_p(\omega) = 1$. We consider the vector space $\mathcal{P}_{0}\bigl(\Tilde{E}\bigr)\big|_{p}$ denoted by $V$. We have the filtration $F_{\ast}V$ on $V$ defined by \smash{$F_{a}V = \Image \bigl(\mathcal{P}_a\bigl(\Tilde{E}_p\bigr)\to V\bigr)$} for $a\in [-1,0]$. Let $\psi$ be the morphism such that $\theta = \psi {\rm d}w/w$ and $\mathrm{Res}(\psi)\colon V\to V$ be the linear map induced by $\psi$. We see that $\mathrm{Res}(\psi)(F_{a}V)\subset F_{a}V$ for any $a\in [-1,0]$. Thus, we have the generalized eigenspace decomposition of $\mathrm{Res}(\psi)$ compatible with the filtration $F_{\ast}V$. It implies the decomposition that we want to prove.
\end{proof}

\begin{Lemma}\label{lemma_E1}
 Under Assumption~$\ref{assump}$, the following holds. For $p\in D$ and \smash{$c=\bigl(c^{(p)}\bigr)_{p\in D}\in \R^{D}$},
 \begin{equation*}
 \mathcal{P}_{c}E_{1}|_{U_p} = \strsheaf_{U_p}\bigl(\bigl[c^{(p)} + \ord_{p}(\omega)\bigr]\infty\bigr) s_1^{(p)},
 \end{equation*}
 where for $a\in\R$ we write $[a]$ for the integer satisfying $a-1<[a]\leq a$.
\end{Lemma}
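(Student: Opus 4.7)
The plan is to exploit the perfect symmetric pairing $\Tilde{C}$ restricted to the rank-one bundle $E_1$ to pin down the parabolic weight at $p$. By Assumption~\ref{assump}, the filtration on $\Tilde{E}$ at $p$ decomposes as $\mathcal{P}_{*}\Tilde{E}_p = \mathcal{P}_{*}E_{1,p}\oplus \mathcal{P}_{*}E_{3,p}$, so $\mathcal{P}_{*}E_1|_{U_p}$ is itself a filtered line bundle. A direct computation using the matrix representation of $\Tilde{C}$ in the frame $\bigl({\rm d}z_p,1,({\rm d}z_p)^{-1}\bigr)$ and the formulas~(\ref{sections}) for $s_i^{(p)}$ yields
\begin{equation*}
\Tilde{C}\bigl(s_1^{(p)},s_2^{(p)}\bigr)=0,\qquad \Tilde{C}\bigl(s_1^{(p)},s_3^{(p)}\bigr)=0,\qquad \Tilde{C}\bigl(s_1^{(p)},s_1^{(p)}\bigr)=9\cdot 2^{-2/3}f_p^{2}.
\end{equation*}
The first two vanishings mean $E_1$ and $E_3$ are orthogonal under $\Tilde{C}$, so together with Assumption~\ref{assump} the perfect pairing on $\mathcal{P}_{*}\Tilde{E}$ restricts to a perfect symmetric pairing of the filtered line bundle $\mathcal{P}_{*}E_1$.

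Next I would parametrize $\mathcal{P}_{c}E_1|_{U_p} = \strsheaf_{U_p}(\alpha(c)\cdot p)\, s_1^{(p)}$ by a non-decreasing step function $\alpha\colon \R\to \Z$ satisfying $\alpha(c+1)=\alpha(c)+1$, and write $\alpha(c) = [c-\beta]$ for a single unknown real number $\beta$ (the parabolic weight relative to the generator $s_1^{(p)}$). Writing $k=\ord_p(\omega)$ and $f_p = z_p^k u_p$ for a local unit $u_p$, one has $\Tilde{C}(s_1^{(p)},s_1^{(p)}) = 9\cdot 2^{-2/3} u_p^{2}\cdot z_p^{2k}$. A routine calculation with the defining formula for the dual filtration (taking the supremum over $a\in\R$ of $\alpha(a)-[a+c]$) gives $\mathcal{P}_{c}E_1^{*}|_{U_p}=\strsheaf_{U_p}([c+\beta]\cdot p)\bigl(s_1^{(p)}\bigr)^{*}$, while the induced morphism $\Phi_{\Tilde{C}|_{E_1}}$ sends $\mathcal{P}_{c}E_1|_{U_p}$ onto $\strsheaf_{U_p}(([c-\beta]-2k)\cdot p)\bigl(s_1^{(p)}\bigr)^{*}$. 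Requiring this morphism to be an isomorphism of filtered bundles for every $c\in\R$ then reduces to the single identity $[c-\beta]-[c+\beta] = 2k$ for all $c\in\R$.

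The main obstacle is this final step: extracting $\beta=-k$ from the above identity. The key point is that $[c-\beta]-[c+\beta]$ is a bounded, $1$-periodic, piecewise-constant function of $c$ that is constant in $c$ if and only if $2\beta\in\Z$, in which case it equals $-2\beta$. Since the right-hand side $2k$ is independent of $c$, this forces $2\beta=-2k$; combined with $k\in\Z$ we obtain $\beta=-k$. Substituting back gives $\alpha(c) = [c+k] = [c+\ord_p(\omega)]$, which is exactly the desired formula.
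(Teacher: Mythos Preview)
Your argument is correct and follows essentially the same route as the paper's own proof: both use that under Assumption~\ref{assump} the decomposition $E_1\oplus E_3$ is $\Tilde{C}$-orthogonal, so the perfect pairing restricts to a perfect pairing on the rank-one filtered bundle $\mathcal{P}_{*}E_1$, and then read off the parabolic weight from $\ord_p C\bigl(s_1^{(p)},s_1^{(p)}\bigr)=2\ord_p(\omega)$. The paper simply asserts the resulting identity $2d^{(p)}=-\ord_p C\bigl(s_1^{(p)},s_1^{(p)}\bigr)$ directly, whereas you unpack what perfectness means for the dual filtration and derive the identity $[c-\beta]-[c+\beta]=2k$ for all $c$; this is just a more explicit version of the same step (and your analysis of that identity is correct). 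The only minor difference is that the paper takes the $\Tilde{C}$-orthogonality of $E_1$ and $E_3$ for granted (it follows formally from $C(\theta\otimes\id)=C(\id\otimes\theta)$ and $\lambda_1\neq\lambda_2$), while you verify it by direct computation of $C\bigl(s_1^{(p)},s_j^{(p)}\bigr)$.
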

\begin{proof}
 Since the local section $s_1^{(p)}$ is a frame of $E_1$ on $U_p$, then there exists $d^{(p)}\in\R$ such that~\smash{$\mathcal{P}_{c}E_{1}|_{U_p} = \strsheaf_{U_p}\bigl(\bigl[c^{(p)}- d^{(p)}\bigr]\infty\bigr) s_1^{(p)}$}. The decomposition in Assumption \ref{assump} is orthogonal with respect to $\Tilde{C}$ and the induced pairing \smash{$\Tilde{C}|_{U_p}\colon \mathcal{P}_*E_1|_{U_p}\otimes \mathcal{P}_*E_1|_{U_p}\to \mathcal{P}^{(0)}_*\bigl(\mathcal{O}_{\overline{X}}(*D)\bigr)|_{U_p}$} is perfect. Therefore, we obtain
\begin{equation*}
 2d^{(p)} = -\ord_{p}{C\bigl(s_1^{(p)},s_1^{(p)}\bigr)} = -2\ord_{p}(\omega).\tag*{\qed}
 \end{equation*}
 \renewcommand{\qed}{}
\end{proof}

\begin{Lemma}\label{deg1}
 Under Assumption~$\ref{assump}$, the following holds:
 \begin{equation*}
 \deg{\mathcal{P}_*(E_1)} = \deg{\mathcal{P}_*(E_3)} = -\sum_{p\in X}\ord_p(\omega).
 \end{equation*}
\end{Lemma}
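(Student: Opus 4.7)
The plan is to leverage the perfect pairing $\tilde{C}$ together with the orthogonality of $E_1$ and $E_3$ under $\tilde{C}$. First I would verify the orthogonality directly from \eqref{sections}: short computations give $\tilde{C}\bigl(s_1^{(p)},s_2^{(p)}\bigr) = \tilde{C}\bigl(s_1^{(p)},s_3^{(p)}\bigr) = 0$, so $E_1\perp E_3$ everywhere. Combined with Assumption~\ref{assump}, this makes the decomposition $\mathcal{P}_{*}\tilde{E}_p = \mathcal{P}_{*}E_{1,p}\oplus \mathcal{P}_{*}E_{3,p}$ orthogonal at every $p\in D$, so $\tilde{C}$ restricts to perfect pairings on $\mathcal{P}_{*}E_1$ and on $\mathcal{P}_{*}E_3$ at every point of $D$. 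Equivalently, the induced morphisms $\Phi_i\colon \mathcal{P}_{*}E_i \to (\mathcal{P}_{*}E_i)^{*}$ are isomorphisms at every $p\in D$ for $i=1,3$.

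Away from $D$, I would compute $\Phi_i$ explicitly in the frames $s_j^{(p)}$. One finds that $\Phi_1$ is multiplication by $\tilde{C}\bigl(s_1^{(p)},s_1^{(p)}\bigr) = 9\cdot 2^{-2/3}f_p^2$, while the restriction of $\tilde{C}$ to $E_3$ has matrix in $\bigl(s_2^{(p)},s_3^{(p)}\bigr)$ with determinant $-9\cdot 2^{-2/3}f_p^2$. Both vanish to order $2\ord_{q}(\omega)$ at every $q\in X$ with $\omega(q)=0$. Therefore $\Phi_i$ is injective, and its cokernel $Q_i$ is a torsion sheaf supported in $X\setminus D$ of total length $2\sum_{p\in X}\ord_{p}(\omega)$.

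To conclude, the short exact sequence $0\to \mathcal{P}_{*}E_i \xrightarrow{\Phi_i}(\mathcal{P}_{*}E_i)^{*}\to Q_i\to 0$ gives $\deg(\mathcal{P}_{*}E_i)^{*} - \deg\mathcal{P}_{*}E_i = \mathrm{length}(Q_i)$, because $Q_i$ is supported away from $D$ (no parabolic weight intervenes) and $\Phi_i$ is a filtered isomorphism at every $p\in D$. Combined with the standard identity $\deg(\mathcal{P}_{*}E_i)^{*} = -\deg\mathcal{P}_{*}E_i$, this yields
\[
-2\deg\mathcal{P}_{*}E_i \;=\; 2\sum_{p\in X}\ord_{p}(\omega),
\]
so $\deg\mathcal{P}_{*}E_i = -\sum_{p\in X}\ord_{p}(\omega)$ for both $i=1$ and $i=3$.

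The principal (modest) obstacle is extending Lemma~\ref{degree_proof} to a short exact sequence with a torsion quotient; this reduces to the standard fact that for a morphism of rank-$r$ locally free sheaves, the length of the cokernel at a point equals the order of vanishing of its determinant, while no parabolic contribution arises precisely because $Q_i$ avoids $D$.
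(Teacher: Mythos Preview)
Your proposal is correct and follows essentially the same route as the paper: both arguments use the orthogonality of $E_1$ and $E_3$ under $\tilde{C}$ (so that perfectness of $\tilde{C}$ restricts to each summand at $D$) together with the order of vanishing of the Gram data on $X$, yielding $2\deg\mathcal{P}_*E_i = -2\sum_{p\in X}\ord_p(\omega)$. The only cosmetic difference is packaging---the paper writes the conclusion as an isomorphism $\mathcal{P}_*E_1^{\otimes 2}\cong \mathcal{P}_*^{(0)}\mathcal{O}_{\overline{X}}(*D)\otimes\mathcal{O}_{\overline{X}}\bigl(-\sum 2\ord_p(\omega)p\bigr)$ (and $\det(\mathcal{P}_*E_3)^{\otimes 2}$ likewise), whereas you phrase it via $\Phi_i\colon\mathcal{P}_*E_i\to(\mathcal{P}_*E_i)^*$ with a torsion cokernel of the same length; these are equivalent via tensor--hom.
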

\begin{proof}
 The induced pairing
 \[
 \Tilde{C}|_{U_p}\colon \mathcal{P}_*E_1|_{U_p}\otimes \mathcal{P}_*E_1|_{U_p}\to \mathcal{P}^{(0)}_*\bigl(\mathcal{O}_{\overline{X}}(*D)\bigr)\big|_{U_p}
\]
  is perfect. Since \smash{$\ord_p\bigl(C\bigl(s_1^{(p)},s_1^{(p)}\bigr)\bigr)=2\ord_p(\omega)$} holds for $p\in X$, we obtain
 \[
 \mathcal{P}_*E_1\otimes \mathcal{P}_*E_1\cong \mathcal{P}^{(0)}_*\bigl(\mathcal{O}_{\overline{X}}(*D)\bigr)\otimes \strsheaf_{\overline{X}}\biggl(\sum_{p\in X} -2\ord_p(\omega)p\biggr).
 \]
 Thus, we see that $\deg(\mathcal{P}_*(E_1)) = -\sum_{p\in X}\ord_p(\omega)$.

 The induced pairing \[
 \Tilde{C}|_{U_p}\colon \ \mathcal{P}_*E_3|_{U_p}\otimes \mathcal{P}_*E_3|_{U_p}\to \mathcal{P}^{(0)}_*\bigl(\mathcal{O}_{\overline{X}}(*D)\bigr)\big|_{U_p}
 \]
 is perfect. Since
\smash{$\ord_p\bigl(C\bigl(s^{(p)}_2,s^{(p)}_3\bigr)\bigr)=\ord_p(\omega)$} and \smash{$C\bigl(s^{(p)}_2,s^{(p)}_2\bigr)=0$}
 hold for $p\in X$, we obtain \[
 \det(\mathcal{P}_*E_3)\otimes \det(\mathcal{P}_*E_3)\cong \smash{\mathcal{P}^{(0)}_*}\bigl(\mathcal{O}_{\overline{X}}(*D)\bigr)\otimes \strsheaf_{\overline{X}}\biggl(\sum_{p\in X} -2\ord_p(\omega)p\biggr).
 \]
 Thus, we see that
 $\deg(\mathcal{P}_*(E_3)) = \deg(\det(\mathcal{P}_*(E_3))) = -\sum_{p\in X}\ord_p(\omega)$.
\end{proof}

The holomorphic differential $q_2$ has a zero at $p\in X$ if and only if $\omega$ has a zero at $p\in X$. Thus, if $q_2$ has a zero in $X$, then $\deg{\mathcal{P}_*(E_1)}$ and $\deg{\mathcal{P}_*(E_3)}$ are negative.

\begin{Lemma}\label{polystable}
 Suppose that $q_2$ has some zeros and Assumption~$\ref{assump}$ holds. The following are equivalent:
 \begin{itemize}\itemsep=0pt
 \item $\bigl(\mathcal{P}_*\bigl(\Tilde{E}\bigr),\Tilde{\theta}\bigr)$ is polystable,
 \item $\bigl(\mathcal{P}_*\bigl(\Tilde{E}\bigr),\Tilde{\theta}\bigr)$ is stable,
 \item $\deg{\mathcal{P}_{*}E_2}$ and $\deg{\mathcal{P}_{*}E_4}$ are negative.
 \end{itemize}
\end{Lemma}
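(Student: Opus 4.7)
My plan is to first pin down the slope of $\mathcal{P}_*\Tilde{E}$: since the symmetric pairing $\Tilde{C}$ is perfect, $\Phi_{\Tilde{C}}$ is an isomorphism $\mathcal{P}_*\Tilde{E}\to\mathcal{P}_*\Tilde{E}^{*}$, and the standard identity $\deg\mathcal{P}_*F^{*} = -\deg\mathcal{P}_*F$ forces $\deg\mathcal{P}_*\Tilde{E}=0$, so $\mu(\Tilde{E})=0$. With this in hand, the preceding lemma tells us that the only nontrivial proper Higgs subbundles are $E_1,E_2,E_3,E_4$, so stability of $\bigl(\mathcal{P}_*\Tilde{E},\Tilde{\theta}\bigr)$ is equivalent to the four inequalities $\mu(E_i)<0$ for $i=1,2,3,4$.

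Next I would apply Lemma~\ref{deg1} to evaluate two of these slopes:
\[
 \mu(E_1) = -\sum_{p\in X}\ord_p(\omega), \qquad \mu(E_3) = -\frac{1}{2}\sum_{p\in X}\ord_p(\omega).
\]
Because $q_2$ is holomorphic on $X$, so is $\omega$ (by Lemma~\ref{meromorphic_form}), and the hypothesis that $q_2$ has some zeros means $\omega$ does too, hence $\sum_{p\in X}\ord_p(\omega)>0$. So $\mu(E_1)$ and $\mu(E_3)$ are strictly negative for free, and stability reduces to $\deg\mathcal{P}_*E_2<0$ and $\deg\mathcal{P}_*E_4<0$, which is exactly the equivalence of the second and third conditions.

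For polystable $\Leftrightarrow$ stable, stable $\Rightarrow$ polystable is immediate. Conversely, suppose $\bigl(\mathcal{P}_*\Tilde{E},\Tilde{\theta}\bigr)$ is polystable but not stable; then it decomposes as $F_1\oplus\cdots\oplus F_n$ with $n\geq 2$ and each $F_i$ a stable Higgs subbundle of slope $0$. Each $F_i$ must be one of $E_1,E_2,E_3,E_4$. Three rank-one summands are impossible since the only rank-one Higgs subbundles are $E_1$ and $E_2$, and $E_1+E_2=E_4$ has rank $2$. Hence $n=2$ with ranks $1$ and $2$, and the inclusions $E_1\subset E_4$, $E_2\subset E_3$, $E_2\subset E_4$ eliminate every pairing except $\Tilde{E} = E_1\oplus E_3$, which would force $\mu(E_1)=0$ and contradict $\sum_{p\in X}\ord_p(\omega)>0$. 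The whole argument is essentially bookkeeping once the classification of Higgs subbundles is at our disposal; the only step needing real care is this enumeration of possible polystable decompositions, together with the appeal to perfectness to fix $\mu(\Tilde{E})=0$.
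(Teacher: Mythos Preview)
Your proof is correct and follows essentially the same route as the paper. You make explicit the fact $\deg\mathcal{P}_*\Tilde{E}=0$ (which the paper leaves implicit) and then, like the paper, invoke the classification of Higgs subbundles together with Lemma~\ref{deg1} to reduce stability to the negativity of $\deg\mathcal{P}_*E_2$ and $\deg\mathcal{P}_*E_4$.

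The only organizational difference is in the polystable $\Leftrightarrow$ stable step. You enumerate the possible direct-sum decompositions of $\Tilde{E}$ into Higgs subbundles and rule each out (three rank-one summands are impossible, and among rank $1+2$ pairs only $E_1\oplus E_3$ survives the inclusion constraints, but $\mu(E_1)<0$). The paper instead observes that the short exact sequences
\[
0\to E_2\to\Tilde{E}\to\Tilde{E}/E_2\to 0,\qquad 0\to E_4\to\Tilde{E}\to\Tilde{E}/E_4\to 0
\]
do not split as Higgs bundles (any Higgs complement would have to be one of the $E_i$, and the inclusions $E_1,E_2\subset E_4$ and $E_2\subset E_3$ forbid this), so a destabilizing subbundle of slope $0$---necessarily $E_2$ or $E_4$---cannot appear as a direct summand. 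The two arguments are logically equivalent; yours is slightly more explicit, while the paper's non-splitting formulation packages the same case analysis more compactly.
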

\begin{proof}
 By Lemma \ref{deg1}, $\bigl(\mathcal{P}_*\bigl(\Tilde{E}\bigr),\Tilde{\theta}\bigr)$ is stable if and only if $\deg{\mathcal{P}_{*}E_2}$ and $\deg{\mathcal{P}_{*}E_4}$ are negative.
 Short exact sequences
 \begin{align*}
 0\longrightarrow E_2\longrightarrow \Tilde{E} \longrightarrow \Tilde{E}/E_2\longrightarrow 0,\qquad
 0\longrightarrow E_4\longrightarrow \Tilde{E} \longrightarrow \Tilde{E}/E_4\longrightarrow 0
 \end{align*}
 do not split as Higgs bundles. Thus, $\bigl(\mathcal{P}_*\bigl(\Tilde{E}\bigr),\Tilde{\theta}\bigr)$ is polystable if and only if it is stable.
\end{proof}

Let us construct a filtered bundle over $\Tilde{E}$. If a filtered bundle $\mathcal{P}_{*}\Tilde{E}$ over $\Tilde{E}$ satisfies Assumption~\ref{assump} and $\Tilde{C}$ is a perfect pairing of $\mathcal{P}_{*}\Tilde{E}$, then the induced filtration $\mathcal{P}_*(E_1)$ is uniquely determined by Lemma~\ref{lemma_E1}. Thus, it suffices to consider the filtration $\mathcal{P}_*(E_3)$.
Let $\varphi = \Tilde{\theta}|_{E_3}-\lambda_2 \id_{E_3}$. $\varphi\colon E_3\to E_3 \otimes K_{\overline{X}}$ satisfies $\varphi_p\neq 0$ for $p\in X$ and $\varphi^2=0$. Therefore, $\varphi$ induces the isomorphism%
\begin{equation}\label{isomorphism}
 E_3/\Ker{\varphi} \longrightarrow \Ker{\varphi}\otimes K_{\overline{X}}.
\end{equation}
For $p\in D$, the local section \smash{$s^{(p)}_2$} is a frame of $E_2=\Ker{\varphi}$ on $U_p$ and \smash{$s^{(p)}_3$} satisfies \smash{$\varphi\bigl(s^{(p)}_3\bigr)\! = s^{(p)}_2 {\rm d}z_p$}.
\begin{Lemma}\label{section_C}
 For $p\in D$ and the local sections $s^{(p)}_2$ and $s^{(p)}_3$, the following holds:
 \begin{equation*}
 C\bigl(s^{(p)}_2,s^{(p)}_2\bigr) = 0,\qquad C\bigl(s^{(p)}_2,s^{(p)}_3\bigr) \neq 0.
 \end{equation*}
\end{Lemma}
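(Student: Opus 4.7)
Both identities are verified by direct substitution into the explicit matrix form of $C=C_{X,3}$ in the frame $\bigl({\rm d}z_p,1,({\rm d}z_p)^{-1}\bigr)$. In this frame the pairing is given by $C(v,w)=v_1 w_3 + v_2 w_2 + v_3 w_1$, so it suffices to plug in the coordinate expressions of $s^{(p)}_2$ and $s^{(p)}_3$ recorded in (\ref{sections}).

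For the first identity, the contributions $2v_1 v_3$ and $v_2^2$ for $v=s^{(p)}_2=\bigl(-2^{-5/3}f_p^2,\,-2^{-1/3}f_p,\,1\bigr)^{T}$ come out to $-2^{-2/3}f_p^2$ and $+2^{-2/3}f_p^2$ respectively, and cancel exactly. This cancellation is not accidental: using $\theta s^{(p)}_2=\lambda_2 s^{(p)}_2$ together with $\theta s^{(p)}_3 = \lambda_2 s^{(p)}_3 + s^{(p)}_2 {\rm d}z_p$ and the symmetry $C(\theta\otimes\id)=C(\id\otimes\theta)$, one sees
\[
C\bigl(s^{(p)}_2, s^{(p)}_2\bigr)\,{\rm d}z_p = C\bigl(s^{(p)}_2,\theta s^{(p)}_3\bigr) - C\bigl(\theta s^{(p)}_2, s^{(p)}_3\bigr) = 0,
\]
so either derivation suffices.

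For the second identity, the same expansion with $s^{(p)}_3=\bigl(-2^{2/3}f_p,\,1,\,0\bigr)^{T}$ gives
\[
C\bigl(s^{(p)}_2, s^{(p)}_3\bigr) = -2^{-1/3}f_p - 2^{2/3}f_p = -3\cdot 2^{-1/3}f_p.
\]
Since the standing assumption of this subsection is $\omega\not\equiv 0$ and $\omega=f_p\,{\rm d}z_p$ on $U_p$, the function $f_p$ is not identically zero, hence $C\bigl(s^{(p)}_2,s^{(p)}_3\bigr)$ is a nonzero meromorphic section on $U_p$.

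There is no substantive obstacle here: the coefficients appearing in (\ref{sections}) were calibrated so that $\bigl(s^{(p)}_1,s^{(p)}_2,s^{(p)}_3\bigr)$ is a Jordan frame for $\theta$ compatible with $C$, and the only care required is bookkeeping of the fractional powers of $2$. A conceptual cross-check is that on the open set $U$ where $\omega\neq 0$ the non-degeneracy of $C$, together with the $C$-orthogonality of $E_1$ and $E_3$ (forced by $\lambda_1\neq \lambda_2$), implies that $C$ restricts to a non-degenerate pairing on the rank-two bundle $E_3$ containing the isotropic line $E_2=\C\,s^{(p)}_2$; hence $C\bigl(s^{(p)}_2,s^{(p)}_3\bigr)$ cannot vanish identically.
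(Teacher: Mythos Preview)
Your proof is correct and follows the same approach as the paper: the paper's proof is simply ``By (\ref{sections}), we can calculate $C\bigl(s^{(p)}_2,s^{(p)}_2\bigr)$ and $C\bigl(s^{(p)}_2,s^{(p)}_3\bigr)$,'' and you carry out exactly this computation (with the answer $C\bigl(s^{(p)}_2,s^{(p)}_3\bigr)=-3\cdot 2^{-1/3}f_p$ matching the paper's later use of $\ord_p C\bigl(s^{(p)}_2,s^{(p)}_3\bigr)=\ord_p(\omega)$). Your additional conceptual remarks via the $\theta$-symmetry of $C$ and the non-degeneracy on $E_3$ are correct and pleasant, but go beyond what the paper records.
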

\begin{proof}
 By (\ref{sections}), we can calculate $C\bigl(s^{(p)}_2,s^{(p)}_2\bigr)$ and $C\bigl(s^{(p)}_2,s^{(p)}_3\bigr)$.
\end{proof}

Let $v^{(p)}_2 = s^{(p)}_2$ and
\begin{equation*}
 v^{(p)}_3 = s^{(p)}_3-\frac{2C\bigl(s^{(p)}_3,s^{(p)}_3\bigr)}{C\bigl(s^{(p)}_2,s^{(p)}_3\bigr)}s^{(p)}_2.
\end{equation*}
Then \smash{$v^{(p)}_2$}, \smash{$v^{(p)}_3$} are a frame of $E_3$ on $U_p$. Moreover, by Lemma \ref{section_C}, we obtain \smash{${\varphi\bigl(v^{(p)}_3\bigr)\! = v^{(p)}_2 {\rm d}z_p}$} and \smash{$C\bigl(v^{(p)}_i,v^{(p)}_i\bigr)=0$} for $i=2,3$. For $d=(d_2,d_3)\in \R^D \times \R^D$, we define a filtered bundle $\mathcal{P}^d_{*}E_3$ over $E_3$ as follows:
\begin{equation*}
 \mathcal{P}^d_{c}E_3|_{U_p} = \mathcal{O}_{U_p}\bigl(\bigl[c^{(p)}-d^{(p)}_2\bigr]\infty\bigr)v^{(p)}_2 \oplus \mathcal{O}_{U_p}\bigl(\bigl[c^{(p)}-d^{(p)}_3\bigr]\infty\bigr)v^{(p)}_3,
\end{equation*}
where \smash{$c=(c^{(p)})_{p\in D}$} is contained in $\R^{D}$, and for $a\in\R$, $[a]$ is the integer satisfying $a-1<[a]\leq a$.
Then by $\mathcal{P}^d_{*}E_3$ and $\mathcal{P}_{*}E_1$ in Lemma \ref{lemma_E1}, we obtain the filtered bundle $\mathcal{P}_{*}^d \Tilde{E}$ which satisfies Assumption \ref{assump}.
The filtered bundles over $E_2$ and $E_3/E_2$ then are induced by
\begin{equation*}
 \mathcal{P}^{d}_{*}E_2 = \bigl(\mathcal{P}^{d}_{*}E_3\bigr) \cap E_2,\qquad \mathcal{P}^{d}_{*}(E_3/E_2) = \Image\bigl(\mathcal{P}^{d}_{*}E_3 \to E_3/E_2\bigr),
\end{equation*}
and $\deg{\mathcal{P}^{d}_*E_3} = \deg{\mathcal{P}^{d}_*E_2} + \deg{\mathcal{P}^{d}_*(E_3/E_2)}$ holds by Lemma \ref{degree_proof}.

\begin{Lemma}\label{good}
 The filtered Higgs bundle $\bigl(\mathcal{P}^d_*E_3,{\Tilde{\theta}}|_{E_3}\bigr)$ is good if and only if $d^{(p)}_2-1\leq d^{(p)}_3$ for all $p\in D$.
\end{Lemma}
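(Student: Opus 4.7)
The plan is to verify the goodness condition locally at each $p\in D$ in the adapted frame $\bigl(v_2^{(p)},v_3^{(p)}\bigr)$ for $E_3$. Since $\Tilde{\theta}|_{E_3}$ has the single eigenvalue $\lambda_2$ at $p$ with algebraic multiplicity $2$, no ramified cover is required (take $m=1$): the formal decomposition at $p$ consists of a single summand, and I would take the irregular type $\mathfrak{a}\in z_p^{-1}\C\bigl[z_p^{-1}\bigr]$ to be the principal polar part of a primitive of $\lambda_2$ (so $\mathfrak{a}=0$ whenever $\ord_p(\omega)\geq -1$). In every case the residual $1$-form $\lambda_2-\mathrm{d}\mathfrak{a}$ has at most a simple pole at $p$, and the goodness condition reduces to showing that $\Tilde{\theta}|_{E_3}-\mathrm{d}\mathfrak{a}\cdot\id$ is logarithmic with respect to the lattice $\mathcal{P}^d_*E_3$.

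In the frame $\bigl(v_2^{(p)},v_3^{(p)}\bigr)$, the matrix of $\Tilde{\theta}|_{E_3}-\mathrm{d}\mathfrak{a}\cdot\id$ is
\[
\begin{pmatrix}\lambda_2-\mathrm{d}\mathfrak{a}&\mathrm{d}z_p\\ 0&\lambda_2-\mathrm{d}\mathfrak{a}\end{pmatrix}.
\]
The diagonal piece $(\lambda_2-\mathrm{d}\mathfrak{a})\cdot\id$ already sends $\mathcal{P}^d_cE_3$ into $\mathcal{P}^d_{c+1}E_3\otimes K_{\overline{X}}$ because $\lambda_2-\mathrm{d}\mathfrak{a}$ has at most a simple pole at $p$. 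Since $\varphi\bigl(v_2^{(p)}\bigr)=0$, only the $v_3^{(p)}$-summand of $\mathcal{P}^d_cE_{3,p}$ needs checking: the local generator $z_p^{-[c-d_3^{(p)}]}v_3^{(p)}$ is sent to $z_p^{-[c-d_3^{(p)}]}v_2^{(p)}\mathrm{d}z_p$, and this lies in $\mathcal{P}^d_{c+1}E_3\otimes K_{\overline{X}}$ (locally generated by $z_p^{-[c+1-d_2^{(p)}]}v_2^{(p)}\mathrm{d}z_p$) if and only if $\bigl[c-d_3^{(p)}\bigr]\leq \bigl[c+1-d_2^{(p)}\bigr]=\bigl[c-d_2^{(p)}\bigr]+1$.

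It then remains to show that, writing $d_i=d_i^{(p)}$, the inequality $[c-d_3]\leq[c-d_2]+1$ holds for every $c\in\R$ if and only if $d_2-1\leq d_3$. The ``if'' direction is immediate from the monotonicity of $[\cdot]$ applied to $c-d_3\leq c+1-d_2$. For the converse, specialising to $c=d_3$ forces $0=[0]\leq[d_3-d_2]+1$, equivalently $d_3-d_2\geq -1$, which is the claimed inequality. I expect the only mild obstacle to be keeping careful track of the irregular type $\mathfrak{a}$ in the higher-pole case $\ord_p(\omega)\leq -2$; this ends up being harmless because subtracting $\mathrm{d}\mathfrak{a}\cdot\id$ leaves the off-diagonal nilpotent entry $\varphi$ unchanged, and it is precisely that entry that produces the constraint $d_2^{(p)}-1\leq d_3^{(p)}$.
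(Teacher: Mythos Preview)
Your proof is correct and follows essentially the same route as the paper's: both reduce goodness at $p$ to the nilpotent part $\varphi=\Tilde{\theta}|_{E_3}-\lambda_2\,\id$ being logarithmic for the lattice $\mathcal{P}^d_*E_{3,p}$, and then read off the inequality from $\varphi\bigl(v_3^{(p)}\bigr)=v_2^{(p)}\,\mathrm{d}z_p$. Your version is simply more explicit about the irregular type $\mathfrak{a}$ and the floor-function bookkeeping, which the paper leaves implicit.
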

\begin{proof}
 Suppose that \smash{$\bigl(\mathcal{P}^d_*E_3,{\Tilde{\theta}}|_{E_3}\bigr)$} is good. Then $\varphi=\Tilde{\theta}|_{E_3}-\lambda_2\id_{E_3}$ is logarithmic with respect to the filtrations. For $p\in D$, we obtain \smash{$d^{(p)}_2-1\leq d^{(p)}_3$} because \smash{$\varphi\bigl(v_3^{(p)}\bigr) = v_2^{(p)}{\rm d}z_p$}. Conversely, if~\smash{$d^{(p)}_2-1\leq d^{(p)}_3$} for $p\in D$, we see that $\varphi\bigl(\mathcal{P}^d_{c} E_{3,p}\bigr)\subset \mathcal{P}^d_{c+1} E_{3,p}\otimes K_{\overline{X},p}$ for $c\in\R$.
\end{proof}

\begin{Lemma}\label{perfect}
 The pairing $\Tilde{C}\colon \mathcal{P}^d_* \Tilde{E}\otimes \mathcal{P}^d_* \Tilde{E}\to \mathcal{P}_*\bigl(\mathcal{O}_{\overline{X}}(*D)\bigr)$ is perfect if and only if $d^{(p)}_2 + d^{(p)}_3 = -\ord_p(\omega)$ for any $p\in D$.
\end{Lemma}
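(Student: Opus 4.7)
The plan is to use the orthogonality of $E_1$ and $E_3$ with respect to $\tilde{C}$ to split perfectness into two independent conditions, one on $E_1$ which holds automatically and one on $E_3$ which yields the claimed identity. First I would prove $\tilde{C}(E_1,E_3) = 0$. The compatibility $\tilde{C}(\tilde\theta\otimes\id) = \tilde{C}(\id\otimes\tilde\theta)$ gives $\lambda_1\tilde{C}(e_1,e_2) = \lambda_2\tilde{C}(e_1,e_2)$ for $e_1\in E_1$, $e_2\in E_2 = \Ker(\tilde\theta-\lambda_2\id)$; since $\lambda_1\neq\lambda_2$ on the complement of the zeros of $\omega$, this forces $\tilde{C}(E_1,E_2)\equiv 0$, and the same argument applied to $e_3\in E_3$ (using $(\tilde\theta-\lambda_2)e_3\in E_2$) extends to $\tilde{C}(E_1,E_3) = 0$. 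Combined with Assumption~\ref{assump}, the decomposition $\mathcal{P}^d_*\tilde{E}_p = \mathcal{P}_*E_{1,p}\oplus\mathcal{P}^d_*E_{3,p}$ is orthogonal, so $\Phi_{\tilde C}$ is the direct sum of its restrictions to $E_1$ and $E_3$. The restriction to $E_1$ is already an isomorphism of filtered bundles: by Lemma~\ref{lemma_E1} the generator $s_1^{(p)}$ sits at parabolic weight $-\ord_p(\omega)$, and the computation from (\ref{sections}) yields $\ord_p\bigl(C\bigl(s_1^{(p)},s_1^{(p)}\bigr)\bigr) = 2\ord_p(\omega)$, which exactly balances the dual weight.

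For the $E_3$ piece I would compute $\Phi_{\tilde C}$ in the frame $\bigl(v_2^{(p)},v_3^{(p)}\bigr)$. By construction $C\bigl(v_i^{(p)},v_i^{(p)}\bigr) = 0$ for $i=2,3$, and Lemma~\ref{section_C} combined with (\ref{sections}) gives $C\bigl(v_2^{(p)},v_3^{(p)}\bigr) = C\bigl(s_2^{(p)},s_3^{(p)}\bigr) = c_0 f_p$ for a nonzero constant $c_0$, so this pairing value has order $\ord_p(\omega)$ at $p$. In the dual frame $\bigl(v_2^{(p)*},v_3^{(p)*}\bigr)$ one then reads off $\Phi_{\tilde C}\bigl(v_2^{(p)}\bigr) = c_0 f_p\cdot v_3^{(p)*}$ and $\Phi_{\tilde C}\bigl(v_3^{(p)}\bigr) = c_0 f_p\cdot v_2^{(p)*}$.

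Finally I would translate this into a weight comparison. The element $v_i^{(p)}$ has parabolic weight $d_i^{(p)}$ in $\mathcal{P}^d_*E_3$, so dually $v_i^{(p)*}$ has weight $-d_i^{(p)}$, and $c_0 f_p$ has weight $-\ord_p(\omega)$ in $\mathcal{P}^{(0)}_*\bigl(\mathcal{O}_{\overline{X}}(*D)\bigr)$. Hence $\Phi_{\tilde C}\bigl(v_2^{(p)}\bigr)$ has weight $-d_3^{(p)}-\ord_p(\omega)$, which must equal $d_2^{(p)}$ in order for $\Phi_{\tilde C}$ to preserve the filtrations, yielding the required $d_2^{(p)}+d_3^{(p)} = -\ord_p(\omega)$; the symmetric calculation starting from $v_3^{(p)}$ gives the same relation. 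Since $\ord_p(\omega)\in\Z$, under this equality $\Phi_{\tilde C}$ sends the generator $z_p^{-[c-d_i^{(p)}]}v_i^{(p)}$ of $\mathcal{P}^d_cE_3$ to a unit multiple of the generator $z_p^{-[c+d_{5-i}^{(p)}]}v_{5-i}^{(p)*}$ of $\mathcal{P}^d_cE_3^*$, giving a genuine filtered isomorphism, while any strict inequality displaces these generators and breaks perfectness at every sufficiently fine level of the filtration.

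The only real obstacle is the bookkeeping: making sure that tensoring with $\mathcal{P}^{(0)}_*\bigl(\mathcal{O}_{\overline{X}}(*D)\bigr)$ shifts parabolic weights compatibly with the floor-function convention, and that the orthogonal decomposition coming from Assumption~\ref{assump} really persists at the level of filtered bundles rather than only of the underlying $\strsheaf_{\overline{X}}(*D)$-modules. Both points reduce to elementary checks once the explicit frames and weights above are in hand.
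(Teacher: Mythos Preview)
Your proposal is correct and follows essentially the same route as the paper: reduce perfectness to the $E_3$-summand via the $\tilde C$-orthogonal decomposition $E_1\oplus E_3$, then use $C\bigl(v_i^{(p)},v_i^{(p)}\bigr)=0$ together with $\ord_p C\bigl(v_2^{(p)},v_3^{(p)}\bigr)=\ord_p(\omega)$ to extract the weight condition $d_2^{(p)}+d_3^{(p)}=-\ord_p(\omega)$. Your version supplies the eigenvalue argument for orthogonality and the filtered-dual bookkeeping that the paper's two-line proof leaves implicit, but the strategy is identical.
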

\begin{proof}
 The induced pairing \smash{$\Tilde{C}|_{U_p}\colon \mathcal{P}_*E_3|_{U_p}\otimes \mathcal{P}_*E_3|_{U_p}\to \mathcal{P}^{(0)}_* \bigl(\mathcal{O}_{\overline{X}} (*D)\bigr)\big|_{U_p}$} is perfect for any ${p\in D}$ if and only if $\Tilde{C}$ is perfect because the decomposition in Assumption \ref{assump} is orthogonal~with~respect to $\Tilde{C}$. By \smash{$C\bigl(v^{(p)}_i,v^{(p)}_i\bigr)=0$} for $i=2,3$, the pairing $\Tilde{C}|_{U_p}$ is perfect if and~only~if%
\begin{equation*}
 d^{(p)}_2 + d^{(p)}_3 = -\ord_p C\bigl(v^{(p)}_2,v^{(p)}_3\bigr) = -\ord_p(\omega).\tag*{\qed}
 \end{equation*}
 \renewcommand{\qed}{}
\end{proof}

\begin{Lemma}\label{deg_E2}
 The following holds:
 \begin{equation*}
 \deg{\mathcal{P}^d_{*}E_2} = \frac{1}{2}\biggl(-\sum_{p\in X}\ord_p(\omega) + \sum_{p\in D} \bigl(d^{(p)}_3 - d^{(p)}_2\bigr) + 2-2g\biggr),
 \end{equation*}
where $g$ is the genus of $\overline{X}$.
\end{Lemma}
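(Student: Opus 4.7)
The plan is to compute $\deg \mathcal{P}^d_{*}E_2$ by using Lemma \ref{degree_proof} applied to the short exact sequence
\[
0 \longrightarrow E_2 \longrightarrow E_3 \longrightarrow E_3/E_2 \longrightarrow 0,
\]
together with Lemma \ref{deg1} (which gives $\deg \mathcal{P}^d_{*} E_3 = -\sum_{p\in X}\ord_p(\omega)$). This reduces the problem to relating $\deg \mathcal{P}^d_{*}(E_3/E_2)$ to $\deg \mathcal{P}^d_{*}E_2$, which I will accomplish using the isomorphism $\varphi \colon E_3/E_2 \xrightarrow{\sim} E_2 \otimes K_{\overline{X}}$ from \eqref{isomorphism}.

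First, I would observe that even at points $p\in D$ this map is an isomorphism of $\strsheaf_{\overline{X}}(*D)$-modules, because $\varphi\bigl(v_3^{(p)}\bigr) = v_2^{(p)}\,{\rm d}z_p$ sends the frame $[v_3^{(p)}]$ of $E_3/E_2$ to the frame $v_2^{(p)}\otimes {\rm d}z_p$ of $E_2\otimes K_{\overline{X}}$ (here $dz_p$ is a local frame of $K_{\overline{X}}$). Let $L_{E_2}$ denote the specific $\strsheaf_{\overline{X}}$-extension of $E_2|_X$ determined by the frames $v_2^{(p)}$, and analogously let $L_{E_3/E_2}$ be the extension determined by the frames $[v_3^{(p)}]$. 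Then transporting structure via $\varphi$ yields $L_{E_3/E_2}\cong L_{E_2}\otimes K_{\overline{X}}$, hence
\[
\deg L_{E_3/E_2} = \deg L_{E_2} + 2g-2.
\]

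Next, using the definition of the degree of a filtered bundle with $c^{(p)} = d_i^{(p)}$ so that $\mathcal{P}^d_{c} L_{\bullet,p}$ equals the unextended stalk, I would read off
\[
\deg \mathcal{P}^d_{*} E_2 = \deg L_{E_2} - \sum_{p\in D} d_2^{(p)}, \qquad \deg \mathcal{P}^d_{*}(E_3/E_2) = \deg L_{E_3/E_2} - \sum_{p\in D} d_3^{(p)}.
\]
Substituting these into $\deg \mathcal{P}^d_{*}E_3 = \deg \mathcal{P}^d_{*}E_2 + \deg \mathcal{P}^d_{*}(E_3/E_2)$ gives
\[
2\deg \mathcal{P}^d_{*}E_2 = -\sum_{p\in X}\ord_p(\omega) + 2 - 2g + \sum_{p\in D}\bigl(d_3^{(p)} - d_2^{(p)}\bigr),
\]
which is the desired formula.

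The only step that needs care is checking that $\varphi$ genuinely induces an isomorphism of $\strsheaf_{\overline{X}}(*D)$-modules $E_3/E_2 \xrightarrow{\sim} E_2\otimes K_{\overline{X}}$ globally, including at the points of $D$ where $\omega$ may have poles and at zeros of $\omega$ in $X$ (where both $\lambda_i$ collide); away from $D$ this is \eqref{isomorphism}, and on each $U_p$ for $p\in D$ the explicit identity $\varphi\bigl(v_3^{(p)}\bigr) = v_2^{(p)}\,{\rm d}z_p$ makes it manifest. Everything else is bookkeeping about how the degree of a filtered line bundle changes when its weight is shifted.
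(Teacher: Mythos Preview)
Your proposal is correct and follows essentially the same approach as the paper: your extensions $L_{E_2}$ and $L_{E_3/E_2}$ are precisely the paper's $\mathcal{P}^d_{d_2}E_2$ and $\mathcal{P}^d_{d_3}(E_3/E_2)$, and the paper likewise uses the isomorphism \eqref{isomorphism} (transported via $\varphi\bigl(v_3^{(p)}\bigr)=v_2^{(p)}\,{\rm d}z_p$) to obtain $\deg \mathcal{P}^d_{d_3}(E_3/E_2)=\deg \mathcal{P}^d_{d_2}E_2+2g-2$, then combines this with Lemma~\ref{deg1} and Lemma~\ref{degree_proof} exactly as you do.
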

\begin{proof}
 Since the morphism ${P}^d_{d_3}(E_3/E_2)\to {P}^d_{d_2}(E_2)\otimes K_{\overline{X}}$ induced by (\ref{isomorphism}) is an isomorphism, we obtain
\begin{gather}
 \deg{\mathcal{P}^d_{*}(E_3/E_2)} + \sum_{p\in D}\sum_{d^{(p)}_{3}-1< c^{(p)}\leq d^{(p)}_{3}} c^{(p)} \dim \gr^{\mathcal{P}^d}_{c^{(p)}} (E_3/E_2)_{p} \nonumber\\
 \qquad{}= \deg{\mathcal{P}^d_{d_3}(E_3/E_2)} = \deg{\mathcal{P}^d_{d_2}E_2} + \deg{K_{\overline{X}}} \nonumber\\
 \qquad{}= \deg{\mathcal{P}^d_{*}E_2} + \sum_{p\in D}\sum_{d^{(p)}_{2}-1< c^{(p)}\leq d^{(p)}_{2}} c^{(p)} \dim \gr^{\mathcal{P}^d}_{c^{(p)}} \bigl(E_{2,p}\bigr) + 2g -2,\label{deg2}
\end{gather}
 Since
\begin{gather*}
 \sum_{d^{(p)}_{3}-1< c^{(p)}\leq d^{(p)}_{3}} c^{(p)} \dim \gr^{\mathcal{P}^d}_{c^{(p)}} (E_3/E_2)_{p} = d^{(p)}_3, \qquad
 \sum_{d^{(p)}_{2}-1< c^{(p)}\leq d^{(p)}_{2}} c^{(p)} \dim \gr^{\mathcal{P}^d}_{c^{(p)}} E_{2,p} = d^{(p)}_2,
\end{gather*}
we obtain $\deg{\mathcal{P}^d_{*}E_2} = \frac{1}{2}\bigl(-\sum_{p\in X}\ord_p(\omega) + \sum_{p\in} \bigl(d^{(p)}_3 - d^{(p)}_2\bigr) + 2-2g\bigr)$ from Lemma \ref{deg1}, ${\deg{\mathcal{P}^{d}_*E_3} = \deg{\mathcal{P}^{d}_*E_2} + \deg{\mathcal{P}^{d}_*(E_3/E_2)}}$ and (\ref{deg2}).
\end{proof}

\begin{Lemma}\label{deg_E4}
 The following holds:
 \begin{equation*}
 \deg{\mathcal{P}^d_{*}E_4} = \frac{1}{2}\biggl(-\sum_{p\in X}\ord_p(\omega) + \sum_{p\in D} \bigl(d^{(p)}_3 - d^{(p)}_2\bigr) + 2-2g\biggr).
 \end{equation*}
\end{Lemma}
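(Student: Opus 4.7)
The strategy is to identify $E_2$ with the orthogonal complement $E_4^{\perp}$ of $E_4$ inside $\Tilde{E}$ under the pairing $\Tilde{C}$, and then use self-duality of $\mathcal{P}^d_* \Tilde{E}$ to conclude $\deg{\mathcal{P}^d_* E_4} = \deg{\mathcal{P}^d_* E_2}$; Lemma~\ref{deg_E2} will then give the claimed formula.

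First I would establish the orthogonality relations $\Tilde{C}(E_1,E_2) = 0$ and $\Tilde{C}(E_2,E_2) = 0$. On $\overline{X}\setminus D$, choose local generators with $\Tilde{\theta}s_i = \lambda_i s_i$ for $i=1,2$. The compatibility $\Tilde{C}\bigl(\Tilde{\theta}\otimes\id\bigr) = \Tilde{C}\bigl(\id\otimes\Tilde{\theta}\bigr)$ gives $(\lambda_1-\lambda_2)\Tilde{C}(s_1,s_2) = 0$, so $\Tilde{C}(E_1,E_2) = 0$ generically and hence everywhere. Applied to a local Jordan pair $(s_2, s_3)$ with $\Tilde{\theta} s_3 \equiv \lambda_2 s_3 + s_2\,{\rm d}z_p \pmod{E_2}$, the same compatibility yields $\Tilde{C}(s_2,s_2)\,{\rm d}z_p = 0$, so $\Tilde{C}(E_2,E_2) = 0$ (compare Lemma~\ref{section_C}). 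Together these give $\Tilde{C}(E_4,E_2) = 0$, i.e., $E_2 \subset E_4^{\perp}$. Since $E_4^{\perp}$ is saturated as the kernel of $\Tilde{E}\to E_4^{*}$ and both $E_2$ and $E_4^{\perp}$ are rank-$1$ subbundles of $\Tilde{E}$ coinciding generically, $E_2 = E_4^{\perp}$.

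To extract degrees I would apply Lemma~\ref{degree_proof} to the exact sequence $0\to E_2\to \Tilde{E}\to \Tilde{E}/E_2\to 0$. Perfectness of $\Tilde{C}$ (Lemma~\ref{perfect}) gives an isomorphism $\Phi_{\Tilde{C}}\colon \mathcal{P}^d_*\Tilde{E}\to\bigl(\mathcal{P}^d_*\Tilde{E}\bigr)^{*}$ of filtered bundles, so $\deg{\mathcal{P}^d_*\Tilde{E}}=0$. Dualizing the inclusion $E_4\hookrightarrow \Tilde{E}$ produces a filtered short exact sequence $0\to \mathcal{P}^d_*E_4^{\perp}\to \mathcal{P}^d_*\Tilde{E}\to (\mathcal{P}^d_*E_4)^{*}\to 0$; combined with $E_4^{\perp}=E_2$ this identifies $\mathcal{P}^d_*\bigl(\Tilde{E}/E_2\bigr)\cong (\mathcal{P}^d_*E_4)^{*}$, and hence $\deg{\mathcal{P}^d_*(\Tilde{E}/E_2)} = -\deg{\mathcal{P}^d_*E_4}$. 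Feeding these into Lemma~\ref{degree_proof} yields $0 = \deg{\mathcal{P}^d_*E_2} - \deg{\mathcal{P}^d_*E_4}$, so $\deg{\mathcal{P}^d_*E_4} = \deg{\mathcal{P}^d_*E_2}$, and Lemma~\ref{deg_E2} concludes.

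The one point that needs care is justifying the identification $\mathcal{P}^d_*\bigl(\Tilde{E}/E_4^{\perp}\bigr)\cong (\mathcal{P}^d_*E_4)^{*}$ at the level of \emph{filtered} bundles, not only of the underlying $\strsheaf_{\overline{X}}(*D)$-modules. This requires tracing through the definitions of induced filtrations on subbundles, quotients, and duals together with the fact that $\Phi_{\Tilde{C}}$ preserves $\mathcal{P}^d_*$ by perfectness; once Assumption~\ref{assump} is in force, this is a routine but slightly tedious verification.
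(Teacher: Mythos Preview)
Your proof is correct and takes a genuinely different route from the paper. The paper mimics the proof of Lemma~\ref{deg_E2} one level up: it works on the quotient $\Tilde{E}/E_1$, uses the nilpotent map induced by $\varphi$ to relate $\deg\mathcal{P}^d_*(E_4/E_1)$ to $\deg\mathcal{P}^d_*(\Tilde{E}/E_1)$, and then adds back $\deg\mathcal{P}^d_*E_1$ via Lemmas~\ref{degree_proof} and~\ref{deg1}. You instead exploit the symmetry of the pairing: the identification $E_4^{\perp}=E_2$, together with self-duality of $\mathcal{P}^d_*\Tilde{E}$ under $\Phi_{\Tilde{C}}$, yields $\deg\mathcal{P}^d_*E_4=\deg\mathcal{P}^d_*E_2$ directly, so the formula follows from Lemma~\ref{deg_E2} without repeating that computation. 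Your argument is more conceptual and makes transparent why $E_2$ and $E_4$ have the same degree; the paper's argument is more hands-on but self-contained.

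Two small remarks. First, both arguments tacitly assume the perfectness condition $d_2^{(p)}+d_3^{(p)}=-\ord_p(\omega)$: the paper needs it through its appeal to Lemma~\ref{deg1} for $\deg\mathcal{P}^d_*E_3$, and you need it for $\Phi_{\Tilde{C}}$ to be an isomorphism of filtered bundles. Your citation of Lemma~\ref{perfect} is slightly off, since that lemma \emph{characterizes} perfectness rather than asserting it; just state the hypothesis explicitly. Second, the filtered identification $\mathcal{P}^d_*(\Tilde{E}/E_2)\cong(\mathcal{P}^d_*E_4)^*$ is indeed routine here: Assumption~\ref{assump} gives a filtered splitting $\mathcal{P}_*\Tilde{E}_p=\mathcal{P}_*E_{1,p}\oplus\mathcal{P}_*E_{3,p}$, which in turn splits $\mathcal{P}_*E_{4,p}=\mathcal{P}_*E_{1,p}\oplus\mathcal{P}_*E_{2,p}$, so extending functionals while controlling filtration levels is immediate.
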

\begin{proof}
 Let $\bigl[v^{(p)}_2\bigr]$ and $\bigl[v^{(p)}_3\bigr]$ be the images of $v^{(p)}_2$ and $v^{(p)}_3$ under the natural projection $\mathcal{P}^d_{*}{\Tilde{E}}\to \mathcal{P}^d_{*}\bigl(\Tilde{E}/E_1\bigr)$, respectively. The images \smash{$\bigl[v^{(p)}_2\bigr]$}
 and \smash{$\bigl[v^{(p)}_3\bigr]$} give a frame of $\Tilde{E}/E_1$ on $U_p$ such that~\smash{$\bigl[v^{(p)}_2\bigr]$} is a frame of $E_4/E_1$. Then we see that
\begin{align*}
 \deg{\mathcal{P}^d_{*}(E_4/E_1)} &{}= \frac{1}{2}\biggl(\deg{\mathcal{P}^d_{*}\bigl(\Tilde{E}/E_1\bigr)} + \sum_{p\in D} \bigl(d^{(p)}_3 - d^{(p)}_2\bigr) + 2-2g\biggr)\\
 &{}= \frac{1}{2}\biggl(\sum_{p\in X}\ord_p(\omega) + \sum_{p\in D} \bigl(d^{(p)}_3 - d^{(p)}_2\bigr) + 2-2g\biggr),
 \end{align*}
where the second equality holds from Lemmas \ref{degree_proof} and \ref{deg1}.
Considering the short exact sequence $0\to E_1\to E_4\to E_4/E_1 \to 0$, by Lemmas \ref{degree_proof} and \ref{deg1}, we obtain
\begin{align*}
 \deg{\mathcal{P}^d_{*}E_4} &{}= \deg{\mathcal{P}^d_{*}(E_4/E_1)} + \deg{\mathcal{P}^d_{*}E_1} \\
 &{}= \frac{1}{2}\biggl(-\sum_{p\in X}\ord_p(\omega) + \sum_{p\in D} \bigl(d^{(p)}_3 - d^{(p)}_2\bigr) + 2-2g\biggr).
\tag*{\qed}
\end{align*}
\renewcommand{\qed}{}
\end{proof}

\begin{Proposition}\label{main_prop}
 Suppose that $q_2$ has a zero in $X$ and $d_2, d_3\in \R^D$ satisfy the conditions that
 \begin{itemize}\itemsep=0pt
 \item $d^{(p)}_2-1\leq d^{(p)}_3$ for $p\in D$,
 \item $d^{(p)}_2 + d^{(p)}_3 = -\ord_p(\omega)$ for $p\in D$,
 \item $-\sum_{p\in X}\ord_p(\omega) + \sum_{p\in D} \bigl(d^{(p)}_3 - d^{(p)}_2\bigr) + 2-2g <0$.
 \end{itemize}
 Then the filtered Higgs bundle \smash{$\bigl(\mathcal{P}^d_{*}{\Tilde{E}},\Tilde{\theta}\bigr)$} is good and stable, and the induced symmetric pairing~$\Tilde{C}$ is perfect. Conversely, if \smash{$\bigl(\mathcal{P}^d_{*}{\Tilde{E}},\Tilde{\theta}\bigr)$} is good and polystable, and the induced pairing $\Tilde{C}$ is perfect, then $d_2$ and~$d_3$ satisfy the above conditions.
\end{Proposition}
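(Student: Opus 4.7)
The plan is to assemble the Proposition directly from the preceding lemmas: each of the three listed conditions on $d=(d_2,d_3)$ corresponds bijectively to one of the three conclusions (goodness, perfectness, (poly)stability) via Lemmas \ref{good}, \ref{perfect}, and the combination of Lemmas \ref{polystable}, \ref{deg_E2} and \ref{deg_E4}, respectively. First I would observe that the filtration $\mathcal{P}^d_*\Tilde{E}$ satisfies Assumption \ref{assump} by construction, since it is defined as $\mathcal{P}_*E_1\oplus\mathcal{P}^d_*E_3$ at each $p\in D$, so every lemma in this subsection is applicable.

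For the forward direction I would verify the three conclusions in turn. Condition 1 together with Lemma \ref{good} gives goodness of $(\mathcal{P}^d_*E_3,\Tilde{\theta}|_{E_3})$; upgrading this to goodness of the full $(\mathcal{P}^d_*\Tilde{E},\Tilde{\theta})$ requires matching the direct-sum decomposition at each $p\in D$ with the formal-type decomposition in the definition of a good filtered Higgs bundle, which holds because the eigenvalues $\lambda_1=2^{2/3}\omega$ and $\lambda_2=-2^{-1/3}\omega$ differ by a nonzero meromorphic multiple of $\omega$ and hence produce distinct formal types on a suitable ramified cover around $p$, while the rank-one $E_1$-summand is trivially good. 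Next, condition 2 combined with Lemma \ref{perfect} gives perfectness of $\Tilde{C}$. With perfectness now established, Lemma \ref{polystable} reduces (poly)stability to the negativity of $\deg \mathcal{P}^d_*E_2$ and $\deg \mathcal{P}^d_*E_4$; by Lemmas \ref{deg_E2} and \ref{deg_E4} these two degrees share the common value $\tfrac{1}{2}\bigl(-\sum_{p\in X}\ord_p(\omega)+\sum_{p\in D}(d^{(p)}_3-d^{(p)}_2)+2-2g\bigr)$, and condition 3 is precisely the statement that this value is negative.

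The converse is then immediate: if $(\mathcal{P}^d_*\Tilde{E},\Tilde{\theta})$ is good and polystable and $\Tilde{C}$ is perfect, then conditions 1, 2, and 3 follow from the ``only if'' halves of Lemma \ref{good}, Lemma \ref{perfect}, and the combination of Lemma \ref{polystable} with Lemmas \ref{deg_E2} and \ref{deg_E4}, respectively.

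No substantive obstacle arises beyond this bookkeeping; the genuine technical content has already been absorbed into the preceding lemmas. The only nonmechanical step is the upgrade from goodness of the $E_3$-block to goodness of the whole filtered Higgs bundle, which rests on Assumption \ref{assump} together with the separation of formal types of $\lambda_1$ and $\lambda_2$ noted above.
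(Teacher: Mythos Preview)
Your proposal is correct and follows essentially the same approach as the paper, which simply cites Lemmas \ref{good}, \ref{polystable}, \ref{deg_E2}, \ref{deg_E4}, and \ref{perfect} without further elaboration. Your additional care in ordering perfectness before stability (so that Lemma \ref{deg1}, which underlies Lemmas \ref{polystable}--\ref{deg_E4}, has its standing hypothesis in place) and in justifying the passage from goodness of the $E_3$-block to goodness of the full bundle is sound and, if anything, more thorough than the paper's own proof.
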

\begin{proof}
 From Lemma \ref{good}, $\bigl(\mathcal{P}^d_{*}{\Tilde{E}},\Tilde{\theta}\bigr)$ is a good filtered Higgs bundle.
 From Lemmas~\ref{polystable},~\ref{deg_E2} and~\ref{deg_E4}, we see that $\bigl(\mathcal{P}^d_{*}{\Tilde{E}},\Tilde{\theta}\bigr)$ is stable. From Lemma \ref{perfect}, we see that $\Tilde{C}$ is perfect.
\end{proof}

The following corollaries are part of Theorems \ref{main'} and \ref{main_Cstar'}.
\begin{Corollary}\label{main_cor}
 Suppose that $\bigl(\overline{X},D\bigr)=\bigl(\ps^1,\{\infty\}\bigr)$. There exists a polynomial $f\in \C[z]$ such~that
 \begin{equation*}
 q_2 = 3\cdot 2^{-5/3}f^2 ({\rm d}z)^2, \qquad q_3 = f^3 ({\rm d}z)^3.
 \end{equation*}
 If $\deg{f}\geq 2$, then there exists a harmonic metric of $(E,\theta)$ compatible with $C$.
\end{Corollary}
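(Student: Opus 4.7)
The plan is to realize Corollary~\ref{main_cor} as a direct application of Proposition~\ref{main_prop} combined with Theorem~\ref{correspond_thm}, specialized to $\bigl(\overline{X},D\bigr) = \bigl(\ps^1,\{\infty\}\bigr)$, where $\R^{D}$ collapses to $\R$ and the parameter $d$ becomes a single pair $(d_2,d_3)\in \R^{2}$.

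First, I would invoke Lemma~\ref{meromorphic_form} to obtain a meromorphic $1$-form $\omega$ on $\ps^1$ with $q_2 = 3\cdot 2^{-5/3}\omega^{2}$ and $q_3 = \omega^{3}$. Because $q_2$ and $q_3$ are holomorphic on $\C$, the form $\omega$ is also holomorphic on $\C$, so $\omega = f(z)\,{\rm d}z$ for some $f\in \C[z]$. A computation in the chart $w=1/z$ at $\infty$ yields $\ord_{\infty}(\omega) = -\deg{f}-2$, and counting zeros of $f$ with multiplicity gives $\sum_{p\in\C}\ord_{p}(\omega)=\deg{f}$. When $\deg{f}\geq 1$, $q_2$ has a zero in $\C$, so the hypotheses of Proposition~\ref{main_prop} are satisfied.

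Writing $n=\deg{f}$, I would next seek $(d_2,d_3)\in\R^{2}$ fulfilling the three conditions of Proposition~\ref{main_prop}. The perfect-pairing condition reads $d_2+d_3=n+2$; the goodness (logarithmic) condition reads $d_3-d_2\geq -1$; and the stability condition, using $g=0$, reduces to $d_3-d_2 < n-2$. Setting $s=d_3-d_2$, these become $s\in [-1,n-2)$, a non-empty interval precisely when $n\geq 2$; one may then take any such $s$ and recover $d_2,d_3$ from the relation $d_2+d_3=n+2$.

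Finally, Proposition~\ref{main_prop} supplies a good and stable filtered Higgs bundle $\bigl(\mathcal{P}^d_{*}\Tilde{E},\Tilde{\theta}\bigr)$ with perfect symmetric pairing $\Tilde{C}$. The perfectness yields an isomorphism $\mathcal{P}^d_{*}\Tilde{E}\cong \bigl(\mathcal{P}^d_{*}\Tilde{E}\bigr)^{*}$, and hence $\deg{\mathcal{P}^d_{*}\Tilde{E}}=0$; Theorem~\ref{correspond_thm} then produces a wild harmonic bundle structure on $(E,\theta)$ with real structure $C$---that is, a harmonic metric of $(E,\theta)$ compatible with $C$. I do not expect any individual step to be technically hard; the substantive content is the elementary linear arithmetic showing that the three constraints on $(d_2,d_3)$ are simultaneously satisfiable exactly when $\deg{f}\geq 2$, which reproduces the threshold appearing in the statement.
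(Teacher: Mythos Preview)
Your proposal is correct and follows essentially the same approach as the paper's proof: invoke Lemma~\ref{meromorphic_form} to obtain $f$, compute $\ord_{\infty}(\omega)=-(\deg f+2)$ and $\sum_{p\in\C}\ord_p(\omega)=\deg f$, choose $(d_2,d_3)$ meeting the three conditions of Proposition~\ref{main_prop}, and conclude via Theorem~\ref{correspond_thm}. The only cosmetic difference is that the paper selects the specific boundary value $d_2=(\deg f+3)/2$, $d_3=(\deg f+1)/2$ (i.e.\ your $s=-1$), whereas you allow any $s\in[-1,\deg f-2)$; this extra freedom is precisely what the paper records afterwards in Remark~\ref{condition_of_d_2}.
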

\begin{proof}
 By Lemma \ref{meromorphic_form}, there exists a polynomial $f\in \C[z]$ such that
 \begin{equation*}
 q_2 = 3\cdot 2^{-5/3}f^2 ({\rm d}z)^2, \qquad q_3 = f^3 ({\rm d}z)^3.
 \end{equation*}
 Then we see that $\ord_{\infty}(\omega) = -(\deg{f} + 2 )$.
 Suppose that $d_2^{(\infty)} = (\deg{f} + 3 )/2$ and $d_3^{(\infty)} = (\deg{f} + 1 )/2$ for $p\in D$.
 By Proposition \ref{main_prop}, if $\deg{f} = \sum_{p\in X}\ord_p(\omega) \geq 2$, then $\bigl(\mathcal{P}^d_{*}{\Tilde{E}},\Tilde{\theta}\bigr)$ is a~good stable filtered Higgs bundle with the perfect symmetric pairing \smash{$\Tilde{C}$}. Therefore, if $\deg{f} \geq 2$, then there exists a harmonic metric of $(E,\theta)$ compatible with $C$ from Theorem~\ref{correspond_thm}.
\end{proof}

\begin{Corollary}\label{main_Cstar_cor}
 Suppose that $(\overline{X},D) = \bigl(\ps^1,\{0,\infty\}\bigr)$. There exists a rational function $f\in \C\bigl[z,z^{-1}\bigr]$ such that
 \begin{equation*}
 q_2 = 3\cdot 2^{-5/3}f^2 ({\rm d}z/z)^2, \qquad q_3 = f^3 ({\rm d}z/z)^3.
 \end{equation*}
 Moreover, if $f$ has a zero in $\C^*$ or $f(z)=a z^b$, $a\in\C^*$, $b\in\Z$, $|b|\geq 3$, then there exists a~harmonic metric of $(E,\theta)$ compatible with $C$.
\end{Corollary}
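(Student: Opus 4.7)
The plan is to adapt the argument of Corollary \ref{main_cor} to the setting $(\overline{X}, D) = (\ps^1, \{0, \infty\})$, treating the two alternatives of the hypothesis by separate constructions. First, Lemma \ref{meromorphic_form} produces a meromorphic 1-form $\omega$ on $\ps^1$ with $q_2 = 3\cdot 2^{-5/3}\omega^2$ and $q_3 = \omega^3$; since $q_2$ and $q_3$ are holomorphic on $\C^*$, all poles of $\omega$ lie in $D$, and dividing by ${\rm d}z/z$ yields $f \coloneqq \omega/({\rm d}z/z) \in \C[z,z^{-1}]$.

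For the first alternative, that $f$ has a zero in $\C^*$: write $f(z) = \sum_{k=m}^n a_k z^k$ with $a_m, a_n \neq 0$. Having a zero in $\C^*$ is equivalent to $n > m$, and in that case $\ord_0(\omega) = m-1$, $\ord_\infty(\omega) = -n-1$, $\sum_{p\in\C^*} \ord_p(\omega) = n-m \geq 1$. I would set $d^{(0)}_2 = (2-m)/2$, $d^{(0)}_3 = -m/2$, $d^{(\infty)}_2 = (n+2)/2$, $d^{(\infty)}_3 = n/2$, so that $d^{(p)}_2 - d^{(p)}_3 = 1$ and $d^{(p)}_2 + d^{(p)}_3 = -\ord_p(\omega)$ at each $p\in D$. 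The slope inequality of Proposition \ref{main_prop} then reads $-(n-m) + (-2) + 2 = -(n-m) < 0$, so Proposition \ref{main_prop} produces a good stable filtered extension with perfect $\Tilde C$, and Theorem \ref{correspond_thm} yields the compatible harmonic metric.

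For the second alternative, that $f(z) = a z^b$ with $|b|\geq 3$: here $q_2$ has no zero in $\C^*$, so Proposition \ref{main_prop} is unavailable. Moreover, no filtered bundle of the form $\mathcal{P}^d_*\Tilde{E}$ from Section \ref{subsection_existence} can be both good and stable, since $\sum_{p\in\C^*}\ord_p(\omega) = 0$ reduces Lemma \ref{deg_E2} to $\deg\mathcal{P}^d_*E_2 = \bigl((d^{(0)}_3 - d^{(0)}_2) + (d^{(\infty)}_3 - d^{(\infty)}_2) + 2\bigr)/2$, which is $\geq 0$ whenever the logarithmic constraint $d^{(p)}_3 - d^{(p)}_2 \geq -1$ of Lemma \ref{good} holds. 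A direct construction is therefore required. Using the symmetry $z\mapsto z^{-1}$ one may assume $b\geq 3$, so that $\omega$ has a pole of order $b+1\geq 4$ at $\infty$. The strategy is to exploit this deep irregular singularity by refining the lattice at $\infty$ through a modification of the frame of $E_3$ (for instance replacing $v_3^{(\infty)}$ by $v_3^{(\infty)} + g\cdot v_2^{(\infty)}$ for a suitable $g\in z\C[z]$), thereby enlarging the effective gap $d^{(\infty)}_3 - d^{(\infty)}_2$ while keeping $\varphi$ logarithmic and $\Tilde C$ self-dual; a compatible lattice must be specified at $0$, where $\theta$ is holomorphic with nilpotent value and Assumption \ref{assump} is not automatic. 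Analogues of Lemmas \ref{deg_E2}, \ref{deg_E4} and \ref{perfect} then verify the hypotheses of Theorem \ref{correspond_thm}.

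The main obstacle is this explicit construction in the second alternative, where goodness (compatibility with the formal decomposition after a ramified covering near $\infty$), stability ($\deg\mathcal{P}_*E_2,\deg\mathcal{P}_*E_4 <0$), and perfectness of $\Tilde C$ must all be verified simultaneously in a non-standard frame; the threshold $|b|\geq 3$ is expected to be precisely the condition under which sufficient room exists in the irregular expansion, in line with the non-existence in Theorem \ref{main_Cstar'} when $f$ is constant.
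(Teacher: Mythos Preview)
Your treatment of the first assertion (existence of $f$) and of the first alternative ($f$ has a zero in $\C^*$) is correct and coincides with the paper's argument: you invoke Lemma~\ref{meromorphic_form} and then choose $d^{(p)}_2 = (-\ord_p(\omega)+1)/2$, $d^{(p)}_3 = (-\ord_p(\omega)-1)/2$, exactly as the paper does, and feed this into Proposition~\ref{main_prop} and Theorem~\ref{correspond_thm}.

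The second alternative, $f(z)=az^b$ with $|b|\ge 3$, is where your proposal has a genuine gap. The paper does \emph{not} build a new filtered extension here at all; it simply observes that this case ``reduces to the case of Corollary~\ref{main_cor}''. Concretely (taking $b\ge 3$ after $z\mapsto z^{-1}$), the differentials $q_2$, $q_3$ extend holomorphically across $0$, and one has a canonical isomorphism $\bigl(\mathbb{K}_{\C,3},\theta(\tilde q),C_{\C,3}\bigr)\big|_{\C^*}\cong \bigl(\mathbb{K}_{\C^*,3},\theta(q),C_{\C^*,3}\bigr)$ with $\tilde f(z)=az^{b-1}$. Since $\deg\tilde f=b-1\ge 2$, Corollary~\ref{main_cor} supplies a compatible harmonic metric on $\C$; restricting it to $\C^*$ gives the desired metric, because Hitchin's equation is local. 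This is a one-line reduction, and it is what makes the threshold $|b|\ge 3$ appear: it is exactly the condition $\deg\tilde f\ge 2$ from Corollary~\ref{main_cor}, not a matter of ``room in the irregular expansion''.

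By contrast, your proposed route is both unnecessary and, as written, does not work. You correctly note that no filtration of the form $\mathcal{P}^d_*\tilde E$ (i.e., satisfying Assumption~\ref{assump}) can be good and stable here. But your suggested fix---replacing $v_3^{(\infty)}$ by $v_3^{(\infty)}+g\,v_2^{(\infty)}$---stays inside $E_3$ and hence still yields a filtration respecting the splitting $E_1\oplus E_3$ at $\infty$; your own computation then rules it out. (Moreover, such a replacement destroys $C(v_3',v_3')=0$ unless $g=0$, so the perfectness argument for $\Tilde C$ in Lemma~\ref{perfect} no longer applies as stated.) A construction that genuinely breaks Assumption~\ref{assump} at the \emph{regular} point $0$, mixing $E_1$ and $E_3$, is what the paper carries out explicitly in Propositions~\ref{prop_Cstar_b2} and~\ref{prop_Cstar_b1}---but only for the remaining cases $|b|=1,2$, which lie outside the hypothesis of this corollary. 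For $|b|\ge 3$ the restriction-from-$\C$ argument is all that is needed.
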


\begin{proof}
 By Lemma \ref{meromorphic_form}, there exists a rational function $f\in \C\bigl[z,z^{-1}\bigr]$ such that
 \begin{equation*}
 q_2 = 3\cdot 2^{-5/3}f^2 ({\rm d}z/z)^2, \qquad q_3 = f^3 ({\rm d}z/z)^3.
 \end{equation*}
 The case when $f$ is of the form $f(z)=az^b$ reduces to the case of Corollary \ref{main_cor}. Thus, there exists a harmonic metric of $(E,\theta)$ compatible with $C$. Suppose that $f$ has a~zero in~$\C^*$. Then we~obtain that $\sum_{p\in X}\ord_p(\omega)\geq 1$. By Proposition \ref{main_prop}, if we set \smash{$d^{(p)}_2$} and \smash{$d^{(p)}_3$} as $(-\ord_{p}(\omega)+1)/2$ and $(-\ord_{p}(\omega)-1)/2$, respectively, then there exists a harmonic metric of $(E,\theta)$ compatible with~$C$.
\end{proof}

\begin{Remark}\label{condition_of_d_2}
 In the case of $(\overline{X},D)=\bigl(\ps^1,\{\infty\}\bigr)$, if $2< d_2^{(\infty)} \leq (\deg{f} + 3 )/2$ and $d_3^{(\infty)} = \smash{\deg{f} +2 - d_2^{(\infty)}}$ hold, then $d_2$ and $d_3$ satisfy the conditions in Proposition \ref{main_prop}. Thus, we obtain a~family of harmonic metrics compatible with $C$ which is parameterized by \smash{$d_2^{(\infty)}$}.
\end{Remark}

The following proposition also gives part of Theorems \ref{main'} and \ref{main_Cstar'}.
\begin{Proposition}\label{remark_degree}
 For the two cases of $(\overline{X},D)=\bigl(\ps^1,\{\infty\}\bigr)$ or $\bigl(\ps^1,\{0,\infty\}\bigr)$, if $f$ is constant, then there does not exist any harmonic metric of $(E,\theta)$ compatible with $C$.
\end{Proposition}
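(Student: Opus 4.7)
The proof proceeds by contradiction through Theorem \ref{correspond_thm}. Suppose $h$ is a harmonic metric of $(E,\theta)$ compatible with $C$; then the induced filtered Higgs bundle $\bigl(\mathcal{P}^h_* \tilde E, \tilde\theta\bigr)$ is good, polystable, of degree $0$, and $\tilde C$ extends to a perfect symmetric pairing on it. The plan is: (i) identify this filtered extension with one of the family $\mathcal{P}^d_* \tilde E$ constructed in Section \ref{subsection_existence} for some $d \in \R^D\times \R^D$; and (ii) observe that no such $d$ can satisfy the three conditions of Proposition \ref{main_prop}.

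For (i), assume first that $f$ is a nonzero constant (the degenerate case $f\equiv 0$, corresponding to $\omega\equiv 0$, falls outside the setup of Section \ref{subsection_existence} and needs a separate direct argument since $\theta$ is then globally a regular nilpotent). With this assumption $\omega$ has a pole at every point of $D$: order $2$ at $\infty$ when $\overline X=\ps^1, D=\{\infty\}$, and order $1$ at each of $0,\infty$ when $D=\{0,\infty\}$. Lemma \ref{pole_condition} therefore gives the orthogonal decomposition of Assumption \ref{assump} at each $p\in D$. Lemma \ref{lemma_E1} then pins down the filtration on $E_1$. On $E_3$, we work in the hyperbolic frame $v_2^{(p)}, v_3^{(p)}$ from Section \ref{subsection_existence}: since $C\bigl(v_i^{(p)},v_i^{(p)}\bigr)=0$ and $C\bigl(v_2^{(p)},v_3^{(p)}\bigr)$ is nonzero up to a power of $z_p$, the perfectness of $\tilde C$ restricted to $\mathcal{P}^h_*(E_3)$ forces the filtration to be a direct sum of weight filtrations along $v_2^{(p)}$ and $v_3^{(p)}$, with weights $d_2^{(p)}$ and $d_3^{(p)}$. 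Thus $\mathcal{P}^h_* \tilde E = \mathcal{P}^d_* \tilde E$ for $d=(d_2,d_3)$.

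For (ii), apply the converse half of Proposition \ref{main_prop}. In the case $\bigl(\overline X, D\bigr)=\bigl(\ps^1,\{\infty\}\bigr)$ with $f$ a nonzero constant, one has $\sum_{p\in\C}\ord_p(\omega)=0$, $\ord_\infty(\omega)=-2$ and $g=0$, so the goodness and stability inequalities read
\begin{equation*}
d_2^{(\infty)} - d_3^{(\infty)} \leq 1 \qquad\text{and}\qquad d_2^{(\infty)} - d_3^{(\infty)} > 2,
\end{equation*}
an immediate contradiction. In the case $\bigl(\overline X, D\bigr)=\bigl(\ps^1,\{0,\infty\}\bigr)$, $\sum_{p\in\C^*}\ord_p(\omega)=0$ and $\ord_0(\omega)=\ord_\infty(\omega)=-1$; goodness gives $\sum_{p\in D}\bigl(d_3^{(p)}-d_2^{(p)}\bigr)\geq -2$, while the stability inequality forces that same sum to be strictly less than $-2$, again a contradiction.

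The principal obstacle is step (i): promoting a general good polystable filtered extension with perfect induced pairing to the specific split form $\mathcal{P}^d_*\tilde E$ built from the isotropic frame. The crucial ingredients are the orthogonal splitting supplied by Lemma \ref{pole_condition} (which is where we use that $\omega$ has poles at every point of $D$, i.e.\ that $f$ is nonzero), together with the hyperbolic property $C\bigl(v_i^{(p)},v_i^{(p)}\bigr)=0$, which combined with perfectness diagonalizes the filtration on $E_3$.
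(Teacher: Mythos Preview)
Your approach differs from the paper's and has a substantive gap in step~(i). The claim that perfectness of $\tilde C$ on $E_3$ forces the filtration to split along $v_2^{(p)}$ and $v_3^{(p)}$ is precisely the content of Theorem~\ref{classifying}, whose proof occupies Lemmas~\ref{case1}--\ref{case3} and requires a genuine case analysis (separating $d_3-d_2\notin\Z$, $d_3-d_2=-1$, and $d_3-d_2\in\Z_{\geq 0}$) using both the isotropy $C(v_i,v_i)=0$ \emph{and} the action of the nilpotent part $\varphi$ of the Higgs field on the associated graded; the hyperbolic form of $C$ alone does not diagonalize an arbitrary good filtration. Moreover, Theorem~\ref{classifying} is only established for $D=\{\infty\}$; for $D=\{0,\infty\}$ you would have to redo that work at both punctures. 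There is also a smaller issue in step~(ii): Proposition~\ref{main_prop} carries the hypothesis that $q_2$ has a zero in $X$, which fails when $f$ is a nonzero constant, so its converse cannot be invoked verbatim---Lemma~\ref{polystable}, which identifies polystability with stability, breaks down here because $\deg\mathcal{P}_*E_1=0$, and one must instead argue directly that $(\mathcal{P}^d_*E_3,\theta|_{E_3})$ fails to be polystable.

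The paper's argument avoids all of this. When $f$ is a nonzero constant, $\omega$ has no zeros on $X$, so the splitting $(E,\theta)=(E_1,\theta')\oplus(E_3,\theta'')$ holds globally on $X$, and Lemma~\ref{pole_condition} promotes it to a splitting of the filtered bundle $\mathcal{P}^h_*E$. Each summand is then polystable of degree~$0$, so in particular $(E_3,\theta'')$ admits a harmonic metric. But $\theta''-\lambda_2\,\id_{E_3}$ is a nonzero nilpotent Higgs field on a rank-$2$ bundle over $\C$ or $\C^*$, and \cite[Propositions~3.41 and~3.42]{LM3} rule out any harmonic metric in that situation---contradiction. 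The case $f\equiv 0$ (which you set aside) is handled by the same external result applied directly to $(E,\theta)$, since $\theta$ itself is then nonzero nilpotent. This route is shorter and uniform in the two choices of $D$; your approach could be completed for $D=\{\infty\}$ by first proving Theorem~\ref{classifying}, but it buys nothing over the paper's argument and does not readily extend to $D=\{0,\infty\}$.
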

\begin{proof}
 Suppose that $f$ is a non-zero constant function and there exists a harmonic metric~$h$ of~$(E,\theta)$ compatible with $C$. Then we obtain the good filtered Higgs bundle $\bigl(\mathcal{P}^h_{*} E,\theta\bigr)$ defined by~(\ref{induced_wild_harmonic_bundle}).
 By Lemma \ref{pole_condition}, the decomposition $(E,\theta) = (E_1,\theta')\oplus (E_3,\theta '')$ extends to the decomposition as the good filtered Higgs bundle and the induced filtered Higgs bundles $\bigl(\mathcal{P}^h_{*} E_1,\theta'\bigr)$ and $\bigl(\mathcal{P}^h_{*} E_3,\theta''\bigr)$ are polystable. Therefore, $(E_1,\theta')$ and $(E_3,\theta '')$ have harmonic metrics. However, because the Higgs field $\theta''-\lambda_2\,\id$ is nilpotent and non-zero, there does not exist any harmonic metric of $(E_3,\theta '')$ (see \cite[Propositions~3.41 and~3.42]{LM3} for details). Hence, there does not exist any harmonic metric of $(E,\theta)$ compatible with $C$.
 If $f\equiv 0$, the Higgs bundle $(E,\theta)$ does not have a harmonic metric since $\theta$ is nilpotent and non-zero.
\end{proof}

\subsection[The case of C]{The case of $\boldsymbol{\C}$}

In this subsection, we consider the case when $\bigl(\overline{X},D\bigr)=\bigl(\mathbb{P}^1,\{\infty\}\bigr)$ and $\deg{f}= 1$, where $f$ is the polynomial in Theorem~\ref{main'}.
Let $z$ be a holomorphic coordinate of $\C$ and let ${w=1/z}$. We hereafter write $(E,\theta) = (\mathbb{K}_{\C,3},\theta(q))$ and $C=C_{\C,3}$. Let $\psi$ be the morphism such that~${\theta = \psi {\rm d}w/w}$.

Let $\Tilde{E}$ be a meromorphic extension of $E$ defined as
\[
\tilde{E}|_{U_{\infty}} = \mathcal{O}_{U_{\infty}}(*\infty)s^{(\infty)}_1+\mathcal{O}_{U_{\infty}}(*\infty)s^{(\infty)}_2+\mathcal{O}_{U_{\infty}}(*\infty)s^{(\infty)}_3,
\]
where $U_{\infty}$ is a neighborhood of $\infty$ and \smash{$s^{(\infty)}_1$}, \smash{$s^{(\infty)}_2$} and \smash{$s^{(\infty)}_3$} are the sections defined as (\ref{sections}).
Let~\smash{$v^{(\infty)}_2 = s^{(\infty)}_2$} and
\begin{equation*}
 v^{(\infty)}_3 = s^{(\infty)}_3-\frac{2C\bigl(s^{(\infty)}_3,s^{(\infty)}_3\bigr)}{C\bigl(s^{(\infty)}_2,s^{(\infty)}_3\bigr)}s^{(\infty)}_2.
\end{equation*}
Then we obtain \smash{$C\bigl(v^{(\infty)}_i,v^{(\infty)}_i\bigr)=0$} for $i=2,3$.

\begin{Lemma}\label{mero_unique}
 Let $\Tilde{E'}$ be a meromorphic extension of $E$, i.e., a locally free $\strsheaf_{\ps^1}(*\infty)$-module $\Tilde{E'}$ such that $\Tilde{E'}|_{\C} = E$. Suppose that $\Tilde{E'}$ satisfies the following:
 \begin{itemize}\itemsep=0pt
 \item the pairing $\Tilde{C'}$ induced by $C$ satisfies $\Tilde{C'}\bigl(\Tilde{E'}\otimes \Tilde{E'}\bigr)\subset \strsheaf_{\ps^1}(*\infty)$,
 \item the morphism $\Tilde{\theta '}$ induced by $\theta$ satisfies $\Tilde{\theta'}\bigl(\Tilde{E'}\bigr)\subset \Tilde{E'}\otimes K_{\ps^1}$.
 \end{itemize}
 Then the meromorphic extension $\Tilde{E'}$ is equal to $\Tilde{E}$.
\end{Lemma}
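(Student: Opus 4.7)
The claim is local at $\infty$: since $\Tilde{E'}|_{\C}=E=\Tilde{E}|_{\C}$, we must compare the two as free $\strsheaf_{\ps^1,\infty}(*\infty)$-submodules of rank $3$ inside the stalk at $\infty$ of the sheaf of single-valued meromorphic sections of $E$ on punctured discs about $\infty$. The key observation is that $\strsheaf_{\ps^1,\infty}(*\infty)\cong\C\{w\}[w^{-1}]$ (with $w=1/z$) is a field, so any inclusion of two rank-$3$ free submodules inside the same ambient module must be an equality. It therefore suffices to show $\Tilde{E'}_{\infty}\subset\Tilde{E}_{\infty}$: writing a generic $v\in\Tilde{E'}_{\infty}$ as $v=as_1^{(\infty)}+bs_2^{(\infty)}+cs_3^{(\infty)}$ with $a,b,c$ single-valued holomorphic germs on a punctured disc at $\infty$, the goal is to show each of $a,b,c$ is meromorphic.

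Set $\mu:=(\lambda_1-\lambda_2)/dw$, a nonzero meromorphic germ and hence a unit in $\C\{w\}[w^{-1}]$. Using the Jordan form of $\theta$ in the frame $(s_1^{(\infty)},s_2^{(\infty)},s_3^{(\infty)})$, a direct computation yields
\[
(\Tilde{\theta'}/dw-(\lambda_2/dw)\id)^{2}v=a\mu^{2}s_1^{(\infty)},\qquad (\Tilde{\theta'}/dw-(\lambda_2/dw)\id)(v-as_1^{(\infty)})=-cw^{-2}s_2^{(\infty)}.
\]
The Higgs-field hypothesis forces both vectors to lie in $\Tilde{E'}_{\infty}$, and by construction they lie in $E_1$ and $E_2$ respectively. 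I then use the pairing hypothesis to determine each rank-$1$ intersection. For any $gs_1^{(\infty)}\in\Tilde{E'}_{\infty}\cap E_1$, the constraint $g^{2}C(s_1^{(\infty)},s_1^{(\infty)})\in\strsheaf_{\ps^1,\infty}(*\infty)$ combined with the direct calculation $C(s_1^{(\infty)},s_1^{(\infty)})=9\cdot2^{-2/3}f^{2}\neq0$ (a unit in the field) forces $g^{2}$, and hence the single-valued germ $g$ itself, to be meromorphic; thus $\Tilde{E'}_{\infty}\cap E_1=\strsheaf_{\ps^1,\infty}(*\infty)\cdot s_1^{(\infty)}$, so $a\mu^{2}$, and therefore $a$, is meromorphic.

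An analogous rank-$1$ argument applied to $\Tilde{E'}_{\infty}\cap E_2$, now pairing with an element of $\Tilde{E'}_{\infty}\cap E_3$ outside $E_2$ and invoking $C(s_2^{(\infty)},s_3^{(\infty)})=-3\cdot2^{-1/3}f\neq0$ from Lemma~\ref{section_C}, yields $\Tilde{E'}_{\infty}\cap E_2=\strsheaf_{\ps^1,\infty}(*\infty)\cdot s_2^{(\infty)}$, so $c$ is meromorphic. Finally, the pairing condition applied to $v-as_1^{(\infty)}=bs_2^{(\infty)}+cs_3^{(\infty)}$ gives
\[
\Tilde{C'}(v-as_1^{(\infty)},v-as_1^{(\infty)})=-6\cdot2^{-1/3}bcf+c^{2}\in\strsheaf_{\ps^1,\infty}(*\infty),
\]
which, since $c^{2}$, $c$, and $f$ are meromorphic and $f$ is a unit at $\infty$ (it has a simple pole and no zeros there because $\deg f=1$), forces $b$ to be meromorphic as well. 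Hence $v\in\Tilde{E}_{\infty}$, and $\Tilde{E'}_{\infty}=\Tilde{E}_{\infty}$ follows from the rank argument; thus $\Tilde{E'}=\Tilde{E}$. The main subtlety will be in the rank-$1$ step, where one must observe that a single-valued germ on a punctured disc whose square is meromorphic is itself meromorphic (via Laurent expansion, or equivalently Casorati--Weierstrass).
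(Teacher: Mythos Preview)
Your approach is essentially the paper's: use the Higgs field to isolate eigen-components, then use the pairing together with the ``square of a single-valued germ is meromorphic $\Rightarrow$ the germ is meromorphic'' observation to force coefficients into $K=\mathcal{O}_{\ps^1,\infty}(*\infty)$. The paper packages this by choosing frames of $\Tilde{E'}$ adapted to the generalized eigenspaces, whereas you work element-by-element; the frame version is slightly cleaner because it normalises so that the \emph{same} function $h_3$ scales both $v_2$ and $v_3$, giving $h_3^{2}$ directly.

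There are two points where your write-up needs tightening. First, the $E_2$ step is \emph{not} strictly analogous to the $E_1$ step: since $C\bigl(s_2^{(\infty)},s_2^{(\infty)}\bigr)=0$, pairing $gs_2^{(\infty)}$ with itself gives nothing, and pairing it with a generic element $b's_2^{(\infty)}+c's_3^{(\infty)}\in\Tilde{E'}_{\infty}\cap E_3$ yields $gc'C\bigl(s_2^{(\infty)},s_3^{(\infty)}\bigr)\in K$, from which you cannot conclude $g\in K$ without already knowing $c'\in K$. The fix is to pair the \emph{specific} element you produced, namely $cs_2^{(\infty)}$ (up to your factor), against $v-as_1^{(\infty)}=bs_2^{(\infty)}+cs_3^{(\infty)}$ itself, obtaining $c^{2}C\bigl(s_2^{(\infty)},s_3^{(\infty)}\bigr)\in K$ and hence $c\in K$; this is the analogue of the paper's $h_3^{2}$ computation. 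Second, your final step recovers $b$ from $-6\cdot 2^{-1/3}bcf+c^{2}\in K$ only when $c\neq 0$; when $c=0$ this says nothing about $b$. You must then invoke some $v'\in\Tilde{E'}_{\infty}$ with $c'\neq 0$ (such a $v'$ exists because $C$ restricted to $E'_{3,\infty}=\Tilde{E'}_{\infty}\cap E_3$ is non-degenerate while $C\vert_{Rs_2^{(\infty)}}\equiv 0$), apply the previous steps to get $c'\in K$, and pair $bs_2^{(\infty)}=v-as_1^{(\infty)}$ against $v'-a's_1^{(\infty)}$. With these two adjustments your argument goes through. (The factor $-cw^{-2}$ in your second displayed computation should be $c$ in the $dw$-frame used by the paper, but this is immaterial.)
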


\begin{proof}
 Let $\psi'\colon \Tilde{E'}_{\infty}\to\Tilde{E'}_{\infty}$ be a morphism defined by $\Tilde{\theta '}= \psi' {\rm d}w/w$. Since the characteristic polynomial $\det(t\,\id- \psi')$ is equal to $\det(t\,\id- \psi)$, $\psi$ and $\psi'$ have the same eigenvalues. Thus, there exists the decomposition $\Tilde{E'}_{\infty}=E'_{1,\infty}\oplus E'_{3,\infty}$, where ${E'_1 = \Ker\bigl(\Tilde{\theta '}-\lambda_1 \id_{\Tilde{E'}}\bigr)}$ and \smash{${E'_3 = \Ker\bigl(\Tilde{\theta '}-\lambda_2 \id_{\Tilde{E'}}\bigr)^2}$}. Let \smash{${s'}_1^{(\infty)}$} be a frame of $E'_{1}$ on a~neighborhood $U'$ of~$\infty$. There exists a~holomorphic function $h_1$ on $U'\setminus\{\infty\}$ such that \smash{${s'}_1^{(\infty)} = h_1 s_1^{(\infty)}$}. Then since $\smash{C\bigl({s'}_1^{(\infty)},{s'}_1^{(\infty)}\bigr)} = \smash{h_1^2 C\bigl(s_1^{(\infty)},s_1^{(\infty)}\bigr)}$ is meromorphic, the function $h_1$ is meromorphic on $U'$. Therefore, we obtain $E'_{1} = E_{1}$. It suffices to prove $E'_{3} = E_{3}$. Let \smash{$E'_2 = \Ker\bigl(\Tilde{\theta '}-\lambda_2 \id_{\Tilde{E'}}\bigr)$} and \smash{${s'}_2^{(\infty)}$} be a frame of $E'_{2}$ on $U'$. Since the morphism $\varphi' = \Tilde{\theta '}|_{E'_3}-\lambda_2 \id_{E'_3} \colon E'_3\to E'_3$ induces the isomorphism
 \begin{equation*}
 \varphi' \colon \ E'_3 / E'_2 \to E'_2 \otimes K_{\ps^1},
 \end{equation*}
 we can take a local section \smash{${s'}_3^{(\infty)}$} satisfying \smash{$\varphi'\bigl({s'}_3^{(\infty)}\bigr) = {s'}_2^{(\infty)}{\rm d}w$}. Because \smash{$C\bigl({s'}_2^{(\infty)},{s'}_3^{(\infty)}\bigr)\neq 0$}, we~can construct ${v'}_2^{(\infty)}$ and ${v'}_3^{(\infty)}$ in the same way that we constructed $v^{(\infty)}_2$ and $v^{(\infty)}_3$. Then we~obtain that \smash{$C\big({v'}_i^{(\infty)},{v'}_i^{(\infty)}\big)=0$} for $i=2,3$ and \smash{$\varphi\big({v'}_3^{(\infty)}\big) ={v'}_2^{(\infty)} {\rm d}w$}. There exists a holomorphic function $h_3$ on $U'\setminus\{\infty\}$ such that ${v'}_3^{(\infty)} = h_3 v_3^{(\infty)}$. Then we obtain ${v'}_2^{(\infty)} = h_3 v_2^{(\infty)}$. Since~\smash{$C\big({v'}_2^{(\infty)},{v'}_3^{(\infty)}\big)= C\big(h_3{v}_2^{(\infty)},h_3{v}_3^{(\infty)}\big)= h_3^2 C\big({v}_2^{(\infty)},{v}_3^{(\infty)}\big)$} is meromorphic on $U'$, $h_3$ is meromorphic on $U'$. Therefore, we obtain $E_3 = E'_3$.
\end{proof}

Because of Lemma \ref{mero_unique}, we focus on only filtered bundles over $\Tilde{E}$.
Let $\mathcal{P}_*\Tilde{E}$ be a filtered bundle over $\Tilde{E}$. Suppose that $\big(\mathcal{P}_*\bigl(\Tilde{E}\bigr),\Tilde{\theta}\big)$ is good and the induced pairing $\Tilde{C}$ is perfect. If $s\in \Tilde{E}_{\infty}$ satisfies that for $c\in\R$, $s\in \mathcal{P}_{c}\Tilde{E}_{\infty}$ and $s\notin \mathcal{P}_{<c}\Tilde{E}_{\infty}$, we say that
 the degree of $s$ is $c$ and we write $\deg{s}$ for the degree of $s$. We define $\deg{s}=-\infty$ if $s=0$. Let \smash{$d_2= \deg{v^{(\infty)}_2}$} and \smash{$d_3=\deg{v^{(\infty)}_3}$}.
First, we prove the following lemma.

\begin{Lemma}\label{assump_satisfy}
 The filtered Higgs bundle $\mathcal{P}_*\bigl(\Tilde{E}\bigr)$ satisfies Assumption $\ref{assump}$, that is,
 \begin{equation*}
 \mathcal{P}_*\Tilde{E}_{\infty} = \mathcal{P}_*E_{1,\infty} \oplus \mathcal{P}_*E_{3,\infty}.
 \end{equation*}
\end{Lemma}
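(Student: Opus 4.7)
The plan is to apply Lemma~\ref{pole_condition} at the single point $p = \infty \in D$. The standing hypothesis of this subsection is that $(\mathcal{P}_*\Tilde{E},\Tilde{\theta})$ is good and that the induced pairing $\Tilde{C}$ is perfect, so only one thing requires verification in order to quote that lemma: that $\omega$ has a pole at $\infty$. Since we are in the case $\deg f = 1$, writing $\omega = f(z)\,{\rm d}z$ and passing to the coordinate $w = 1/z$ gives $\omega = -f(1/w)\,w^{-2}\,{\rm d}w$; because $f$ is a polynomial of degree exactly $1$, this expression has a pole of order $3$ at $w = 0$, so $\ord_\infty(\omega) = -3$ and in particular $\omega$ has a pole at $\infty$. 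Lemma~\ref{pole_condition} then yields precisely the decomposition $\mathcal{P}_*\Tilde{E}_\infty = \mathcal{P}_* E_{1,\infty} \oplus \mathcal{P}_* E_{3,\infty}$ claimed.

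There is no real obstacle here; the lemma is recorded separately only because Assumption~\ref{assump} governs all of the subsequent analysis in the subsection, and one has to know that it is automatic for any good filtered extension of $(E,\theta)$ carrying a perfect symmetric pairing. Once the decomposition is in hand, Lemma~\ref{lemma_E1} pins down $\mathcal{P}_* E_{1,\infty}$ uniquely, and the remaining freedom in $\mathcal{P}_*\Tilde{E}_\infty$ is entirely encoded in the pair $d_2 = \deg v_2^{(\infty)}$, $d_3 = \deg v_3^{(\infty)}$ introduced just before the lemma statement. This sets up the comparison between $\mathcal{P}_*\Tilde{E}$ and the family $\mathcal{P}^d_*\Tilde{E}$ constructed in Subsection~\ref{subsection_existence}, through which the arithmetic constraints of Proposition~\ref{main_prop} will eventually rule out the existence of any admissible $d$ in the $\deg f = 1$ case.
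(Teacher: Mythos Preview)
Your proof is correct and follows exactly the same approach as the paper's: the paper's proof consists of the single sentence ``Since the meromorphic 1-form $\omega=f\,{\rm d}z$ has a pole at $\infty$, we obtain the desired decomposition by Lemma~\ref{pole_condition}.'' You simply add the explicit computation $\ord_\infty(\omega)=-3$ and some expository commentary on how the lemma fits into the larger argument.
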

\begin{proof}
 Since the meromorphic 1-form $\omega=f{\rm d}z$ has a pole at $\infty$, we obtain the desired decomposition by Lemma \ref{pole_condition}.
\end{proof}

 Note that $d_3-d_2\geq -1$ by Lemma \ref{good}.
\begin{Lemma}\label{case1}
 Let $U$ be a small neighborhood of $\infty$. If $d_3-d_2\notin\Z$, then
 \begin{equation*}
 \mathcal{P}_{c}E_3|_{U} = \mathcal{O}_U([c-d_2]\infty)v^{(\infty)}_2 \oplus \mathcal{O}_U([c-d_3]\infty)v^{(\infty)}_3.
 \end{equation*}
\end{Lemma}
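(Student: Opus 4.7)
The plan is to prove the claimed equality of sheaves by establishing the two inclusions separately. Write $v_i$ for $v_i^{(\infty)}$ throughout, and note that Lemma \ref{assump_satisfy} lets us even speak of the filtration $\mathcal{P}_c E_3$ as $\mathcal{P}_c \Tilde{E}_{\infty} \cap E_{3,\infty}$. For the inclusion $\supseteq$, the axiom $\mathcal{P}_{c+1}\Tilde{E}_\infty = \mathcal{P}_c \Tilde{E}_\infty \otimes_{\strsheaf_{\ps^1,\infty}} \strsheaf_{\ps^1,\infty}(\infty)$ implies that multiplication by the local frame $w^{-1}$ of $\strsheaf_{\ps^1,\infty}(\infty)$ raises the degree of any section by exactly $1$. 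Since $\deg v_i = d_i$ by definition, one gets $\deg(w^{-n} v_i) = d_i + n$, which is $\le c$ whenever $n\le [c-d_i]$, so $w^{-n}v_i \in \mathcal{P}_c E_3|_U$ in that range. The stalks on the right-hand side are generated as $\strsheaf_{U,\infty}$-modules by such sections, so $\supseteq$ follows.

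For the reverse inclusion, take any $s = a(w)v_2 + b(w)v_3 \in \mathcal{P}_c E_{3,\infty}$ with $a,b$ meromorphic at $\infty$. If $a\ne 0$ has a pole of order $n_a$ at $\infty$, then $av_2$ is a unit multiple of $w^{-n_a}v_2$, so $\deg(av_2) = d_2 + n_a \in d_2 + \Z$, and similarly $\deg(bv_3) \in d_3 + \Z$ whenever $b\ne 0$. The hypothesis $d_3 - d_2 \notin \Z$ ensures that $\deg(av_2)\ne \deg(bv_3)$ whenever both summands are nonzero. Applying the subadditivity $\deg(x+y) \le \max(\deg x, \deg y)$ — which follows from each $\mathcal{P}_c E_{3,\infty}$ being an $\strsheaf_{\ps^1,\infty}$-module — to both $s = av_2 + bv_3$ and $av_2 = s - bv_3$ forces the ``ultrametric'' equality $\deg s = \max(\deg(av_2), \deg(bv_3))$. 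Consequently each summand individually has degree at most $c$, which translates to $n_a \le [c-d_2]$ and $n_b \le [c-d_3]$, i.e.\ $a\in\strsheaf_U([c-d_2]\infty)$ and $b\in\strsheaf_U([c-d_3]\infty)$, as desired.

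The only delicate point is the ultrametric equality in the second step, and it is precisely here that the non-integrality of $d_3-d_2$ enters: it is what makes the two $\Z$-cosets of possible degrees disjoint and thereby prevents cancellation of the leading parts of $av_2$ and $bv_3$. When $d_3-d_2\in\Z$ this breaks down and the filtration can be genuinely non-diagonal in the frame $(v_2,v_3)$, which is why the integer case is treated separately in subsequent lemmas. Beyond the existence of the decomposition in Lemma~\ref{assump_satisfy}, neither the perfectness of $\Tilde{C}$ nor the goodness of $\bigl(\mathcal{P}_{*}\Tilde{E},\Tilde{\theta}\bigr)$ is used in this argument.
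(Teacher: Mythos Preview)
Your proof is correct. The key observation---that $\deg(av_2)\in d_2+\Z$ and $\deg(bv_3)\in d_3+\Z$ lie in disjoint $\Z$-cosets, so the ultrametric inequality for $\deg$ becomes an equality---is exactly the mechanism the paper uses as well.

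The paper's presentation differs slightly in framing: it first normalizes by replacing $v^{(\infty)}_3$ with $s'_3=w^{[d_3-d_2]}v^{(\infty)}_3$ so that $0<d'_3-d_2<1$, then picks auxiliary sections $u_2,u_3$ with $\deg u_2=d_2$, $\deg u_3=d'_3$, expresses $v^{(\infty)}_2$ and $s'_3$ in terms of $(u_2,u_3)$, and uses the same coset argument on the coefficients $g_i,h_i$ to conclude that $(v^{(\infty)}_2,ws'_3)$ and $(v^{(\infty)}_2,s'_3)$ are frames of $\mathcal{P}_{d_2}E_3$ and $\mathcal{P}_{d'_3}E_3$ respectively. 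Your argument bypasses the auxiliary frame entirely and shows the two inclusions directly, which is a bit more streamlined; the paper's version makes the structure of $\pa(\mathcal{P}_*E_{3,\infty})$ slightly more visible. Either way the content is the same, and your final remark that neither goodness nor perfectness of $\Tilde{C}$ is needed here (only the splitting from Lemma~\ref{assump_satisfy}) is accurate.
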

\begin{proof}
 Let \smash{$s'_3=w^{[d_3-d_2]} v^{(\infty)}_3$} and $d'_3 = \deg{s'_3}$. Then $0<d'_3-d_2<1$. It suffices to prove \smash{$\mathcal{P}_{d_2}E_3|_{U} = \mathcal{O}_U v^{(\infty)}_2 \oplus \mathcal{O}_U ws'_3$} and \smash{$\mathcal{P}_{d'_3}E_3|_{U} = \mathcal{O}_U v^{(\infty)}_2 \oplus \mathcal{O}_U s'_3$}. Let $u_2$ and $u_3$ be local sections~of~$\mathcal{P}_{d_2}E_3$ and $\mathcal{P}_{d'_3}E_3$ on $U$, respectively, such that $u_2\neq 0$ in $\gr^{\mathcal{P}}_{d_2} (E_{3,\infty})$ and $u_3\neq 0$ in \smash{$\gr^{\mathcal{P}}_{d'_3} (E_{3,\infty})$}. Then there exist $g_i,h_i \in \mathcal{O}_U(*\infty)$ such that
 \begin{equation*}
 v^{(\infty)}_2 = g_2 u_2 +g_3 u_3, \qquad s'_3 = h_2 u_2 +h_3 u_3.
 \end{equation*}
 We obtain $\deg{(g_2 u_2)}=d_2$ and $\deg{(g_3 u_3)}\leq d'_3-1$ since $\deg{(g_2 u_2)}\in (d_2 +\Z) \cup \{-\infty\}$ and $\deg{(g_3 u_3)}\in \big(d'_3 +\Z\big) \cup \{-\infty\}$, where for $a\in \R$, we write $a+\Z$ for $\{a+n\mid n\in\Z\}$.
 Similarly, we obtain $\deg{(h_2 u_2)}\leq d_2$ and $\deg{(h_3 u_3)}=d'_3$.
 Therefore, \smash{$v^{(\infty)}_2$} and $ws'_3$ are linearly independent in $\mathcal{P}_{d_2}E_3|_{\infty}$. Similarly, \smash{$v^{(\infty)}_2$} and $s'_3$ are linearly independent in ${\mathcal{P}}_{d'_3} E_{3}|_{\infty}$.
\end{proof}

\begin{Lemma}
 Let $U$ be a small neighborhood of $\infty$. If $d_3-d_2=-1$, then
 \begin{equation*}
 \mathcal{P}_{c}E_3|_{U} = \mathcal{O}_U([c-d_2]\infty)v^{(\infty)}_2 \oplus \mathcal{O}_U([c-d_3]\infty)v^{(\infty)}_3.
 \end{equation*}
\end{Lemma}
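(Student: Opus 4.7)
The approach parallels Lemma~\ref{case1}, but the case $d_3-d_2=-1$ requires a new ingredient, because now $\deg v_2^{(\infty)}=d_2$ and $\deg\bigl(z v_3^{(\infty)}\bigr)=d_3+1=d_2$ lie in the same coset modulo $\mathbb{Z}$, so one cannot separate the leading terms as in the proof of that lemma. The plan is to invoke the goodness hypothesis together with Nakayama's lemma.

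Since $\deg f=1$, the form $\lambda_2\,{\rm d}z$ is exact near $\infty$; writing $\lambda_2\,{\rm d}z={\rm d}\mathfrak{a}$ with $\mathfrak{a}\in w^{-1}\mathbb{C}\bigl[w^{-1}\bigr]$, goodness of $\bigl(\mathcal{P}_*\Tilde{E},\Tilde\theta\bigr)$ together with the decomposition $\Tilde{E}_\infty=E_{1,\infty}\oplus E_{3,\infty}$ from Lemma~\ref{assump_satisfy} translates into the statement that, writing $\varphi=\Tilde\theta|_{E_3}-\lambda_2\,\id_{E_3}=\Tilde\psi\,{\rm d}w/w$, the morphism $\Tilde\psi$ preserves the filtration: $\Tilde\psi\bigl(\mathcal{P}_c E_{3,\infty}\bigr)\subset\mathcal{P}_c E_{3,\infty}$ for all $c\in\mathbb{R}$. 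From $\varphi\bigl(v_3^{(\infty)}\bigr)=v_2^{(\infty)}\,{\rm d}w$ and $\varphi\bigl(v_2^{(\infty)}\bigr)=0$, one reads off $\Tilde\psi\bigl(v_3^{(\infty)}\bigr)=w v_2^{(\infty)}$ and $\Tilde\psi\bigl(v_2^{(\infty)}\bigr)=0$, so $\Tilde\psi\bigl(\alpha v_2^{(\infty)}+\beta z v_3^{(\infty)}\bigr)=\beta v_2^{(\infty)}$ for any $\alpha,\beta\in\mathbb{C}$.

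The key step, which I expect to be the only subtle one, is to show that $\bigl[v_2^{(\infty)}\bigr]$ and $\bigl[z v_3^{(\infty)}\bigr]$ are linearly independent in $\gr^{\mathcal{P}}_{d_2}(E_{3,\infty})$. If $\alpha v_2^{(\infty)}+\beta z v_3^{(\infty)}\in\mathcal{P}_{<d_2}E_{3,\infty}$ for some $(\alpha,\beta)\neq(0,0)$, then applying $\Tilde\psi$ (which also preserves $\mathcal{P}_{<d_2}E_{3,\infty}$) gives $\beta v_2^{(\infty)}\in\mathcal{P}_{<d_2}E_{3,\infty}$, forcing $\beta=0$ by $\deg v_2^{(\infty)}=d_2$, after which $\alpha v_2^{(\infty)}\in\mathcal{P}_{<d_2}E_{3,\infty}$ forces $\alpha=0$ as well.

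To conclude, $\rk{E_3}=2$ gives $\sum_{c\in(d_2-1,d_2]}\dim_{\mathbb{C}}\gr^{\mathcal{P}}_c(E_{3,\infty})=2$, so the previous step forces $\pa(\mathcal{P}_*E_{3,\infty})\cap(d_2-1,d_2]=\{d_2\}$, whence $\mathcal{P}_{<d_2}E_{3,\infty}=\mathcal{P}_{d_2-1}E_{3,\infty}=\mathfrak{m}\,\mathcal{P}_{d_2}E_{3,\infty}$, with $\mathfrak{m}$ the maximal ideal of $\strsheaf_{\ps^1,\infty}$. Nakayama's lemma, applied to the $\mathbb{C}$-basis $\bigl\{\bigl[v_2^{(\infty)}\bigr],\bigl[z v_3^{(\infty)}\bigr]\bigr\}$ of $\mathcal{P}_{d_2}/\mathfrak{m}\mathcal{P}_{d_2}$, yields $\mathcal{P}_{d_2}E_3|_U=\strsheaf_U v_2^{(\infty)}\oplus \strsheaf_U(\infty)v_3^{(\infty)}$, and the claimed formula for arbitrary $c$ then follows from axiom 4 of filtered bundles applied iteratively, using $[c-d_3]=[c-d_2]+1$.
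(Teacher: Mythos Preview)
Your proof is correct and follows essentially the same approach as the paper. Both arguments hinge on the same observation: the residue $\Tilde\psi$ of $\varphi=\Tilde\theta|_{E_3}-\lambda_2\,\id$ (the paper writes it as $\psi-2^{-1/3}w^{-1}f$) preserves the filtration by goodness, and applying it to a putative linear relation kills the $v_2^{(\infty)}$-coefficient while isolating the $v_3^{(\infty)}$-coefficient, forcing both to vanish. You work in $\gr^{\mathcal P}_{d_2}$ with the pair $\bigl(v_2^{(\infty)},\,w^{-1}v_3^{(\infty)}\bigr)$, whereas the paper works in $\gr^{\mathcal P}_{d_3}$ with $\bigl(wv_2^{(\infty)},\,v_3^{(\infty)}\bigr)$; these differ only by the shift isomorphism $\gr_{d_3}\simeq\gr_{d_2}$, so the computations are identical. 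Your explicit invocation of Nakayama's lemma and axiom~4 for the passage to general $c$ just spells out what the paper leaves implicit. One minor slip: you write ``the form $\lambda_2\,{\rm d}z$ is exact,'' but $\lambda_2$ is already a $1$-form; you mean simply that $\lambda_2$ is exact near~$\infty$.
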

\begin{proof}
 Suppose that $a_2wv^{(\infty)}_2+a_3v^{(\infty)}_3=0$ in $\gr^{\mathcal{P}}_{d_3}(E_{3,\infty})$ for $a_2,a_3\in\C$.
Since
 \[
 \big(\psi-2^{-1/3}w^{-1}f\big)\big(v^{(\infty)}_2\big)=0 \qquad \text{and} \qquad \big(\psi-2^{-1/3}w^{-1}f\big)\big(v^{(\infty)}_3\big)=wv^{(\infty)}_2,
 \]
 we obtain
 \begin{equation*}
 0 = \big(\psi-2^{-1/3}w^{-1}f\big)\big(a_2wv^{(\infty)}_2+a_3v^{(\infty)}_3\big) = a_3wv^{(\infty)}_2\qquad \text{in}\ \gr^{\mathcal{P}}_{d_3}(E_{3,\infty}).
 \end{equation*}
 Thus, $a_3=0$ and $a_2=0$, that is, \smash{$wv^{(\infty)}_2$} and \smash{$v^{(\infty)}_2$} are linearly independent in $\gr^{\mathcal{P}}_{d_3}(E_{3,\infty})$.
\end{proof}

\begin{Lemma}\label{case3}
 Let $U$ be a small neighborhood of $\infty$. If $d_3-d_2\in\Z_{\geq 0}$, then
 \begin{equation}\label{case3'}
 \mathcal{P}_{c}E_3|_{U} = \mathcal{O}_U([c-d_2]\infty)v^{(\infty)}_2 \oplus \mathcal{O}_U([c-d_3]\infty)v^{(\infty)}_3.
 \end{equation}
\end{Lemma}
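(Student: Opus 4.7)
The plan is to prove the two inclusions of the asserted equality. Write
\[
\mathcal{Q}_c := \mathcal{O}_U\bigl([c-d_2]\infty\bigr) v^{(\infty)}_2 \oplus \mathcal{O}_U\bigl([c-d_3]\infty\bigr) v^{(\infty)}_3
\]
for brevity. The inclusion $\mathcal{Q}_c \subset \mathcal{P}_c E_3|_U$ is immediate from the defining equalities $d_i = \deg v_i^{(\infty)}$ together with the filtration axiom $\mathcal{P}_{c+1} E = \mathcal{P}_c E \otimes \mathcal{O}_{\ps^1,\infty}(\infty)$: each $w^{-[c-d_i]} v_i^{(\infty)}$ lies in $\mathcal{P}_{d_i + [c-d_i]} E_3 \subset \mathcal{P}_c E_3$, because $[c-d_i] \leq c - d_i$.

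For the reverse inclusion, which is the substantive part, I propose to extract the bound directly from the perfectness of $\Tilde{C}$. Given $s \in \mathcal{P}_c E_{3,\infty}$, write $s = a v_2^{(\infty)} + b v_3^{(\infty)}$ with $a, b \in \mathcal{O}_{\ps^1,\infty}(*\infty)$; this is possible because $v_2^{(\infty)}, v_3^{(\infty)}$ form a frame of $\Tilde{E}_3$ as an $\mathcal{O}_{\ps^1}(*\infty)$-module. Using $C\bigl(v_i^{(\infty)}, v_i^{(\infty)}\bigr) = 0$ for $i = 2, 3$, which was built into the construction preceding Lemma~\ref{good}, one obtains
\[
C\bigl(v_3^{(\infty)}, s\bigr) = a \cdot C\bigl(v_2^{(\infty)}, v_3^{(\infty)}\bigr).
\]
Since $\Tilde{C}$ is a morphism of filtered bundles and $s \in \mathcal{P}_c$, $v_3^{(\infty)} \in \mathcal{P}_{d_3}$, we have $C\bigl(v_3^{(\infty)}, s\bigr) \in \mathcal{O}_{\ps^1, \infty}\bigl([c+d_3]\infty\bigr)$. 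Combined with $\ord_\infty C\bigl(v_2^{(\infty)}, v_3^{(\infty)}\bigr) = \ord_\infty(\omega)$, read off from the explicit formulas \eqref{sections}, this gives
\[
\ord_\infty a \geq -[c+d_3] - \ord_\infty(\omega) = -[c+d_3] + (d_2 + d_3).
\]
Because $d_2 + d_3 = -\ord_\infty(\omega) \in \Z$ by Lemma~\ref{perfect}, the identity $[c+d_3] = [c-d_2] + (d_2+d_3)$ holds, and the bound collapses to $\ord_\infty a \geq -[c-d_2]$, i.e.\ $a \in \mathcal{O}_U\bigl([c-d_2]\infty\bigr)$. The symmetric calculation starting from $C\bigl(v_2^{(\infty)}, s\bigr) = b \cdot C\bigl(v_2^{(\infty)}, v_3^{(\infty)}\bigr)$ yields $b \in \mathcal{O}_U\bigl([c-d_3]\infty\bigr)$, so $s \in \mathcal{Q}_c$, as desired.

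I do not anticipate a substantive obstacle; the argument is essentially a single invocation of the filtered-bundle condition on the pairing. The conceptual point that closes the bookkeeping is the integrality of $d_2+d_3$ forced by perfectness, which makes the bracket identity $[c+d_3] = [c-d_2] + (d_2+d_3)$ hold on the nose. Notably, the same pairing computation applies uniformly to the preceding $d_3 - d_2 \notin \Z$ and $d_3 - d_2 = -1$ cases as well, so the hypothesis $d_3 - d_2 \in \Z_{\geq 0}$ is only a convenience of the case split, not an essential restriction on the method.
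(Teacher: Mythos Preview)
There is a genuine gap, located in the appeal to Lemma~\ref{perfect}. That lemma concerns the \emph{constructed} filtrations $\mathcal{P}^d_{*}E_3$ of Section~\ref{subsection_existence}: it says that \emph{for those particular filtrations}, perfectness of $\Tilde{C}$ is equivalent to $d_2^{(p)}+d_3^{(p)}=-\ord_p(\omega)$. In the present lemma, however, $\mathcal{P}_*\Tilde{E}$ is an \emph{arbitrary} good filtered extension with perfect pairing, and $d_2,d_3$ are defined a posteriori as the $\mathcal{P}_*$--degrees of $v_2^{(\infty)},v_3^{(\infty)}$. You have no right to assume this unknown filtration already has the split form $\mathcal{P}^d_*$; establishing precisely that is the content of Lemmas~\ref{case1}--\ref{case3}. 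So invoking Lemma~\ref{perfect} here is circular.

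Without the identity $d_2+d_3=-\ord_\infty(\omega)$, your pairing bound is too weak. Writing $k=-\ord_\infty(\omega)$, the filtered-morphism property of $C$ gives only
\[
\ord_\infty a \;\ge\; -[c+d_3]+k,
\]
and the bracket identity $[c+d_3]=[c-d_2]+(d_2+d_3)$ you use requires $d_2+d_3\in\Z$, which is not given by the hypothesis $d_3-d_2\in\Z_{\ge 0}$. Even granting integrality, the bound becomes $\ord_\infty a\ge -[c-d_2]-(d_2+d_3-k)$, which is short of the target by $d_2+d_3-k$. From the morphism property alone one gets only the inequality $d_2+d_3\ge k$ (since $C(v_2^{(\infty)},v_3^{(\infty)})\in\mathcal{O}([d_2+d_3]\infty)$ and $\ord_\infty C(v_2^{(\infty)},v_3^{(\infty)})=-k$); there is no reason equality should hold before the structure of $\mathcal{P}_*E_3$ is pinned down. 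Filtrations in which $\deg v_2+\deg v_3$ strictly exceeds $\deg(v_2\wedge v_3)$ do occur, and ruling them out here is exactly what needs perfectness in a nontrivial way.

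The paper's proof does not attempt a one-line pairing estimate. Instead it fixes a frame $(w^n v_2^{(\infty)},\,v=av_2^{(\infty)}+bv_3^{(\infty)})$ of the lattice $\mathcal{P}_0E_3$, splits into the cases $\dim\gr^{\mathcal P}_c E_{3,\infty}=2$ and $=1$, and in each case exploits the \emph{induced nondegenerate pairings on the graded pieces} (a genuinely stronger consequence of perfectness than the single containment you use) to force $\ord_\infty a\ge n$, yielding the split frame $(w^n v_2^{(\infty)},\,b v_3^{(\infty)})$; the second case is then shown to be impossible. That extra input---perfectness at the level of $\gr^{\mathcal P}$, not merely the filtered-morphism inequality---is what your argument is missing.
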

\begin{proof}
 We take a local frame $\big(w^{n}v^{(\infty)}_2,v=av^{(\infty)}_2+bv^{(\infty)}_3\big)$ of $\mathcal{P}_{0} E_3$ on $U$, where $n\in \Z$, $a,b\in H^0(U,\strsheaf_{\ps^1}(*\infty))$. There are two cases.
 \begin{enumerate}\itemsep=0pt\setlength{\leftskip}{0.62cm}
 \item[\bf{Case 1.}] There exists $c\in\R$ such that $\dim \gr^{\mathcal{P}}_{c} E_{3,\infty}=2$.
 \item[\bf{Case 2.}] There exists $c\in\R$ such that $\dim \gr^{\mathcal{P}}_{c} E_{3,\infty}=1$.
 \end{enumerate}

 \begin{Lemma}
 In Case $1$, $\pa(\mathcal{P}_{*} E_{3,\infty}) = \Z $ or $\pa(\mathcal{P}_{*} E_{3,\infty}) = \Z + 1/2$ holds. Here, $\Z+1/2=\{n+ 1/2 \mid n\in\Z\}$.
 \end{Lemma}
 \begin{proof}
 The induced pairing
$ \Tilde{C}|_{U}\colon \mathcal{P}_{*}E_3|_{U}\otimes\mathcal{P}_{*}E_3|_{U}\to \mathcal{P}^{(0)}_{*}(\strsheaf_{U}(*\infty))
$
 is perfect by Lemma~\ref{assump_satisfy}. Therefore, for $c\in \pa(\mathcal{P}_{*} E_{3,\infty})$, the non-degenerate symmetric pairing \[
 \gr^{\mathcal{P}}_{c} (E_{3,\infty})\otimes \gr^{\mathcal{P}}_{c} (E_{3,\infty}) \to \gr^{\mathcal{P}^{(0)}}_{2c} (\strsheaf_{U,\infty})
\] is induced. Then since \smash{$\dim \gr^{\mathcal{P}^{(0)}}_{2c} (\strsheaf_{U,\infty})\neq 0$}, we see that $2c\in\Z$.
 \end{proof}
 We first consider the case of $\pa(\mathcal{P}_{*} E_{3,\infty}) = \Z$. Then the induced pairing
 \[
 \gr^{\mathcal{P}}_{0} (E_{3,\infty})\otimes \gr^{\mathcal{P}}_{0} (E_{3,\infty}) \to \gr^{\mathcal{P}^{(0)}}_{0} (\strsheaf_{U,\infty})
 \]
 is non-degenerate. We see that \smash{$C\big(w^{n}v^{(\infty)}_2,v\big) \neq 0$} in \smash{$\gr^{\mathcal{P}^{(0)}}_{0} (\strsheaf_{U,\infty})$} since \smash{$C\big(w^{n}v^{(\infty)}_2,w^{n}v^{(\infty)}_2\big) = 0$} and \smash{$\deg v=\deg\big(w^nv^{(\infty)}_2\big)=0$} hold. Then we obtain
 \begin{align*}
 &\ord_{\infty}C(v,v) = \ord_{\infty}a+\ord_{\infty}b-\deg{f}-2 \geq 0, \\
 &\ord_{\infty}C(w^n v^{(\infty)}_2,v) = \ord_{\infty} b - \deg{f}-2 + n = 0.
 \end{align*}
 Thus, we obtain $\ord_{\infty}a \geq n$, and we can take the local frame \smash{$\big(w^nv^{(\infty)}_2,bv^{(\infty)}_3\big)$} of $\mathcal{P}_{0} E_3$ on~$U$. It implies the equation (\ref{case3'}). In the case of $\pa(\mathcal{P}_{*} E_3) = \Z + 1/2$, the induced pairing \[
 \gr^{\mathcal{P}}_{-1/2} (E_{3,\infty})\otimes \gr^{\mathcal{P}}_{-1/2} (E_{3,\infty}) \to \gr^{\mathcal{P}^{(0)}}_{-1} (\strsheaf_{U,\infty})
 \]
 is non-degenerate. We see that $C\big(w^{n}v^{(\infty)}_2,v\big) \neq 0$ in \smash{$\gr^{\mathcal{P}^{(0)}}_{-1} (\strsheaf_{U,\infty})$} since \smash{$C\big(w^{n}v^{(\infty)}_2,w^{n}v^{(\infty)}_2\big) = 0$} and \smash{$\deg v=\deg\big(w^nv^{(\infty)}_2\big)=-1/2$} hold. We obtain
 \begin{align*}
 &\ord_{\infty}C(v,v) = \ord_{\infty}a+\ord_{\infty}b-\deg{f}-2 \geq 1, \\
 &\ord_{\infty}C(w^n v^{(\infty)}_2,v) = \ord_{\infty} b - \deg{f}-2 + n = 1.
 \end{align*}
 Therefore, we can take the frame $\big(w^nv^{(\infty)}_2,bv^{(\infty)}_3\big)$ of $\mathcal{P}_{0} E_3$ on U.

In Case 2, we may assume that $\deg{w^{n}v^{(\infty)}_2} \neq \deg{v}$. Let $c_2$ and $c_3$ denote $\deg{w^{n}v^{(\infty)}_2}$ and $\deg{v}$, respectively. Then $-1<c_2,c_3\leq 0$ holds. The induced pairing
 \begin{equation*}
 \gr^{\mathcal{P}}_{c_2} (E_{3,\infty})\otimes \gr^{\mathcal{P}}_{c_3} (E_{3,\infty}) \to \gr^{\mathcal{P}^{(0)}}_{c_2+c_3} (\strsheaf_{U,\infty})
 \end{equation*}
 is perfect because of \smash{$C\big(w^{n}v^{(\infty)}_2,w^{n}v^{(\infty)}_2\big) = 0$}. Thus, we obtain $c_2 + c_3= -1$. Moreover, we obtain that
 \begin{equation*}
 \ord_{\infty}C\big(w^n v^{(\infty)}_2,v\big) = \ord_{\infty} b - \deg{f}-2 + n = -(c_2+c_3) = 1.
 \end{equation*}
 \begin{Lemma}
 One of the following holds:
 \begin{itemize}\itemsep=0pt
 \item $\ord_{\infty} a > n$,
 \item $c_2 < c_3$ and $\ord_{\infty} a=n$.
 \end{itemize}
 \end{Lemma}
 \begin{proof}
 Because of $2c_3 < 0$, we obtain
 \begin{equation*}
 \ord_{\infty}C(v,v) = \ord_{\infty}a+\ord_{\infty}b-\deg{f}-2 \geq 1,
 \end{equation*}
 and it gives $\ord_{\infty} a\geq n$. Suppose $\ord_{\infty}a=n$. Then we obtain $\ord_{\infty}C(v,v)=1$. It implies that $2c_3\geq-1$, and we see that $2c_3>-1$ because $c_2\neq c_3$ and $c_2+c_3=-1$ hold. Therefore, we obtain $c_2 < c_3$.
 \end{proof}

 \begin{Lemma}\label{cannot_occur1}
 The condition that $\ord_{\infty} a > n$ cannot occur.
 \end{Lemma}
 \begin{proof}
 Suppose $\ord_{\infty} a > n$. Then we obtain that $\deg{bv^{(\infty)}_3} = \deg{v} = c_3$. Therefore, we see that $d_3-c_3\in \Z$ and $d_2-c_2\in \Z$. This contradicts $d_3-d_2\in\Z$ and $|c_3-c_2|<1$.
 \end{proof}

 \begin{Lemma}\label{cannot_occur2}
 The condition that $c_2 < c_3$ and the condition that $\ord_{\infty} a=n$ cannot occur at the same time.
 \end{Lemma}
 \begin{proof}
 Suppose that $c_2 < c_3$ and $\ord_{\infty} a=n$ hold. Then we obtain that
 \[
 \deg{bv^{(\infty)}_3} = \deg{\big(v-(aw^{-n})w^n v^{(\infty)}_2\big)} = c_3.
 \]
 This contradicts $d_3-c_2\in \Z$ and $|c_3-c_2|<1$.
 \end{proof}
 By Lemmas \ref{cannot_occur1} and \ref{cannot_occur2}, we see that Case 2 cannot occur.
\end{proof}

By Lemmas \ref{case1}--\ref{case3}, we obtain the following theorem.

\begin{Theorem}\label{classifying}
 For the good filtered Higgs bundle $\big(\mathcal{P}_*\Tilde{E},\Tilde{\theta}\big)$ with the perfect pairing $\Tilde{C}$, there exists a neighborhood $U$ of $\infty$ such that
 \begin{equation*}
 \mathcal{P}_*\Tilde{E}|_{U} = \mathcal{P}_*E_1|_{U} \oplus \mathcal{P}_*E_3|_{U}.
 \end{equation*}
 Moreover, $\mathcal{P}_*E_1|_{U}$ and $\mathcal{P}_*E_3|_{U}$ are represented as
 \begin{align*}
 & \mathcal{P}_{c}E_{1}|_{U} = \strsheaf_{U}([c -\deg{f}-2]\infty) s_1^{(\infty)}, \\
 & \mathcal{P}_{c}E_3|_{U} = \mathcal{O}_U([c-d_2]\infty)v^{(\infty)}_2 \oplus \mathcal{O}_U([c-d_3]\infty)v^{(\infty)}_3.
 \end{align*}
\end{Theorem}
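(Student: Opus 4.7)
The plan is to synthesize Theorem \ref{classifying} directly from the structural lemmas proved earlier in this subsection, shrinking the neighborhood $U$ of $\infty$ so that every cited result applies on it. First, Lemma \ref{assump_satisfy} shows that $\mathcal{P}_*\Tilde{E}$ satisfies Assumption \ref{assump} at $\infty$, giving on stalks the splitting $\mathcal{P}_c\Tilde{E}_\infty = \mathcal{P}_c E_{1,\infty} \oplus \mathcal{P}_c E_{3,\infty}$ for every $c\in\R$. Since the generic decomposition $\Tilde{E} = E_1 \oplus E_3$ into generalized $\Tilde{\theta}$-eigenspaces extends holomorphically across $\infty$ (because $\omega$ has a pole there), this stalk splitting upgrades to a direct sum of filtered $\strsheaf_U$-modules on~$U$.

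Second, the $E_1$-summand is pinned down by Lemma \ref{lemma_E1}: because $\omega = f\,\mathrm{d}z$ has a pole of exact order $\deg f + 2$ at $\infty$, we have $\ord_\infty(\omega) = -(\deg f + 2)$, and plugging this into that lemma yields
\[
\mathcal{P}_c E_1|_U = \strsheaf_U\bigl([c - \deg f - 2]\infty\bigr)\, s_1^{(\infty)}.
\]

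Third, the $E_3$-summand is handled by the three lemmas immediately preceding the theorem. By Lemma \ref{good}, the goodness of $\bigl(\mathcal{P}_*\Tilde{E}, \Tilde{\theta}\bigr)$ forces $d_3 - d_2 \geq -1$, so the three cases $d_3 - d_2 \notin \Z$, $d_3 - d_2 = -1$, and $d_3 - d_2 \in \Z_{\geq 0}$ are exhaustive; these are precisely the regimes of Lemma \ref{case1}, the unnamed intermediate lemma, and Lemma \ref{case3}, and each yields the same explicit formula
\[
\mathcal{P}_c E_3|_U = \strsheaf_U\bigl([c-d_2]\infty\bigr)\, v_2^{(\infty)} \oplus \strsheaf_U\bigl([c-d_3]\infty\bigr)\, v_3^{(\infty)}.
\]
Combining these three results gives the theorem. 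I do not anticipate new difficulties at this stage: all the substantial case analysis, especially the ruling out of Case 2 inside Lemma \ref{case3} (where perfectness of the induced graded pairing together with the integrality constraint $d_3 - d_2 \in \Z$ is needed to exclude an unbalanced grading), has already been carried out, so Theorem \ref{classifying} reduces to a clean aggregation of those lemmas.
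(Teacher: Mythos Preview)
Your proposal is correct and matches the paper's approach exactly: the paper simply states ``By Lemmas \ref{case1}--\ref{case3}, we obtain the following theorem,'' and your write-up is a faithful unpacking of that sentence, together with the already-established decomposition (Lemma~\ref{assump_satisfy}) and the $E_1$-description (Lemma~\ref{lemma_E1}) that the paper leaves implicit.
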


\begin{Corollary}\label{sub_cor}
 If the polynomial $f\in \C[z]$ is of degree $1$, there is no harmonic metric of $(E,\theta)$ compatible with $C$.
\end{Corollary}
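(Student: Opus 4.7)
The plan is to combine Theorem~\ref{correspond_thm} with the classification in Theorem~\ref{classifying} to reduce the question to a numerical incompatibility between the perfection/goodness/stability inequalities. Suppose, for contradiction, that a harmonic metric $h$ of $(E,\theta)$ compatible with $C$ exists. By Theorem~\ref{correspond_thm}, $h$ induces a good polystable filtered Higgs bundle $\bigl(\mathcal{P}^h_{*} E,\theta\bigr)$ of degree $0$ on $(\mathbb{P}^1,\{\infty\})$ together with a perfect symmetric pairing induced by $C$. By Lemma~\ref{mero_unique}, the underlying meromorphic extension is forced to be the specific $\tilde{E}$ constructed above, so $\mathcal{P}^h_{*} E$ is a filtered bundle over $\tilde{E}$.

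Next I would invoke Theorem~\ref{classifying}: any such good filtered bundle with perfect induced pairing decomposes locally around $\infty$ as $\mathcal{P}_* E_1 \oplus \mathcal{P}_* E_3$, with the filtration on $E_3$ of the form described by two real numbers $d_2=\deg v_2^{(\infty)}$ and $d_3=\deg v_3^{(\infty)}$. This puts us exactly in the setting of Proposition~\ref{main_prop} with $(\overline{X},D)=(\mathbb{P}^1,\{\infty\})$, $g=0$ and $\sum_{p\in X}\ord_p(\omega)=\deg f=1$.

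Now I would collect the three numerical constraints that $d=(d_2,d_3)$ must simultaneously satisfy. First, goodness of $\bigl(\mathcal{P}^d_*E_3,\tilde\theta|_{E_3}\bigr)$ (Lemma~\ref{good}) gives
\begin{equation*}
 d_3 - d_2 \geq -1.
\end{equation*}
Second, perfection of $\tilde{C}$ (Lemma~\ref{perfect}) fixes $d_2+d_3=-\ord_\infty(\omega)=\deg f+2=3$. Third, since $q_2$ has a zero on $\mathbb{C}$ when $\deg f=1$, polystability is equivalent to stability (Lemma~\ref{polystable}), and the stability condition from Proposition~\ref{main_prop} (equivalently Lemmas~\ref{deg_E2} and~\ref{deg_E4}) becomes
\begin{equation*}
 -\sum_{p\in\mathbb{C}}\ord_p(\omega) + (d_3-d_2) + 2 - 2g = -1 + (d_3-d_2) + 2 < 0,
\end{equation*}
i.e., $d_3-d_2 < -1$.

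The two inequalities $d_3-d_2\geq -1$ and $d_3-d_2<-1$ are incompatible, so no $d\in\mathbb{R}^2$ can yield a good stable filtered Higgs bundle with perfect pairing in this case. This contradicts the assumed existence of $h$, proving the corollary. There is no real analytic obstacle in this argument: all the heavy lifting has already been done in Sections on the classification result (Theorem~\ref{classifying}) and the numerical lemmas for good/perfect/stable conditions. The conceptual content is simply that when $\deg f=1$, the amount of ``zeros of $\omega$ on $X$'' ($=1$) is not enough to overcome the contribution $2g-2=-2$ from $\mathbb{P}^1$, forcing the stability strict inequality out of the range permitted by logarithmicity of $\varphi$ with respect to the filtration.
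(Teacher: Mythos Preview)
Your proposal is correct and follows essentially the same approach as the paper: reduce via Theorem~\ref{correspond_thm}, Lemma~\ref{mero_unique} and Theorem~\ref{classifying} to the filtrations $\mathcal{P}^d_*\Tilde{E}$, then observe that the goodness inequality $d_3-d_2\geq -1$ and the stability inequality $d_3-d_2<-1$ are incompatible when $\deg f=1$. The paper phrases the same contradiction through Remark~\ref{condition_of_d_2} (the interval $(2,(\deg f+3)/2]=(2,2]$ is empty), which is just your pair of inequalities rewritten in terms of $d_2$ after imposing $d_2+d_3=\deg f+2$.
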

\begin{proof}
 If $\deg{f}=1$, by Remark \ref{condition_of_d_2} and Theorem \ref{classifying}, there does not exist any good polystable filtered Higgs bundle over $\Tilde{E}$ with a perfect pairing. By Theorem \ref{correspond_thm}, there is no harmonic metric of $(E,\theta)$ compatible with $C$.
\end{proof}

By Corollary \ref{main_cor}, \ref{sub_cor} and Proposition \ref{remark_degree}, we obtain Theorem \ref{main'}.
\begin{Remark}\label{rem_classifying}
 Theorem \ref{classifying} gives the classification of compatible harmonic metrics of $(E,\theta)$. In other words, compatible harmonic metrics of $(E,\theta)$ are given by filtered Higgs bundles in Theorem~\ref{classifying} and parameterized by $d_2$, which can take any value in the interval $(2,(\deg{f}+3)/2]$.
\end{Remark}

\subsection[The case of C*]{The case of $\boldsymbol{\C^*}$}
In this subsection, we consider the case of $\bigl(\overline{X},D\bigr)=\bigl(\mathbb{P}^1,\{0,\infty\}\bigr)$.
To complete the proof of Proposition~\ref{main_Cstar'}, it suffices to study the case of $f=az^b$, $a\in \C^*$, $b=1,2$ because of Corollary~\ref{main_Cstar_cor} and Proposition \ref{remark_degree}. Suppose that $f=az^b$.
Let $s_1$, $s_2$, $s_3$ be sections defined as follows: with respect to the frame $\big({\rm d}z/z,1,({\rm d}z/z)^{-1}\big)$,
\begin{equation*}
 s_1 = \begin{pmatrix}
 5\cdot 2^{-5/3}f^2 \\
 2^{2/3} f \\
 1
 \end{pmatrix}, \qquad
 s_2 = \begin{pmatrix}
 -2^{-5/3}f^2 \\
 -2^{-1/3} f \\
 1
 \end{pmatrix},\qquad
 s_3 = \begin{pmatrix}
 -2^{2/3}f \\
 1 \\
 0
 \end{pmatrix}.
\end{equation*}
Since the function $f$ has no zero in $\C^*$, $(s_1,s_2,s_3)$ is the frame of $E$. We see that
\begin{equation*}
 \big(\theta+\big(2^{-1/3}f {\rm d}z\big)/z\,\id_E\big)(s_1,s_2,s_3) = (s_1,s_2,s_3) \begin{pmatrix}
 3\cdot 2^{-1/3}f & 0 & 0\\
 0 & 0 & 1 \\
 0 & 0 & 0
 \end{pmatrix}\frac{{\rm d}z}{z}.
\end{equation*}
Thus, we can take the frame $(v_1,v_2,v_3)$ of $E$ such that
\begin{gather*}
 \big(\theta+\big(2^{-1/3}f {\rm d}z\big)/z\,\id_E\big)(v_1,v_2,v_3) = (v_1,v_2,v_3) \begin{pmatrix}
 3\cdot 2^{-1/3}f & 0 & 0\\
 0 & 0 & 1 \\
 0 & 0 & 0
 \end{pmatrix}\frac{{\rm d}z}{z},\\
 C(v_1,v_1)=1,\qquad C(v_i,v_i)=0\qquad i=2,3,\\
 C(v_2,v_3)=\begin{cases}
 1 & \text{if $f=az^2$},\\
 z & \text{if $f=az$}.
 \end{cases}
\end{gather*}
We define $\Tilde{E}$, $E_i$, $\Tilde{\theta}$ and $\Tilde{C}$ as in Section \ref{subsection_existence},
\begin{gather*}
 \tilde{E} = \bigoplus_{i=1}^3 \strsheaf_{\ps^1}(*D)v_i,\qquad E_1 = \strsheaf_{\ps^1}(*D)v_1,\qquad E_2 = \strsheaf_{\ps^1}(*D)v_2, \\
 E_3 = \strsheaf_{\ps^1}(*D)v_2\oplus \strsheaf_{\ps^1}(*D)v_3,\qquad E_4 = \strsheaf_{\ps^1}(*D)v_1\oplus \strsheaf_{\ps^1}(*D)v_2.
\end{gather*}
By Theorem \ref{correspond_thm}, it suffices to find a stable good filtered Higgs bundle over $\bigl(\Tilde{E},\Tilde{\theta}\bigr)$ such that~$\Tilde{C}$ is perfect. Under the change of coordinate $z\mapsto z_1=\alpha z$, ${\rm d}z/z$ remains unchanged, while~$\beta z^b$ transforms to $\beta \alpha^b z^b$. For this reason, we will consider filtered Higgs bundles over $\big(\Tilde{E},\varphi_b\big)$, where~$\varphi_b$ is the Higgs field satisfying
\begin{equation*}
 \varphi_b(v_1,v_2,v_3) = (v_1,v_2,v_3) \begin{pmatrix}
 z^b & 0 & 0\\
 0 & 0 & 1 \\
 0 & 0 & 0
 \end{pmatrix}\frac{{\rm d}z}{z}.
\end{equation*}
Let $\psi_b$ be the morphism defined as $\psi_b\,{\rm d}z/z=\varphi_b$.
\begin{Proposition}\label{prop_Cstar_b2}
 Let $u_1$, $u_2$ and $u_3$ be sections of $\Tilde{E}$ defined as follows:
 \begin{gather*}
 u_1 = z^{-2}v_1 + \frac{\sqrt{-1}}{2}z^{-3}v_2 + \sqrt{-1}z^{-1}v_3,\\
 u_2 = zv_2,\qquad u_3 = -\frac{1}{2} z^{-1}v_2 + zv_3.
 \end{gather*}
 We set a filtered Higgs bundle $\big(\mathcal{P}_{*}\Tilde{E},\varphi_2\big)$ as
 \begin{gather*}
 \mathcal{P}_c \Tilde{E}_{0} = \strsheaf_{\ps^1,0}([c]0)u_1\oplus \strsheaf_{\ps^1,0}([c]0)u_2\oplus \strsheaf_{\ps^1,0}([c]0)u_3,\\
 \mathcal{P}_c \Tilde{E}_{\infty} = \strsheaf_{\ps^1,\infty}([c]\infty)v_1\oplus \strsheaf_{\ps^1,\infty}([c]\infty)v_2\oplus \strsheaf_{\ps^1,\infty}([c]\infty)v_3.
 \end{gather*}
 Then $\big(\mathcal{P}_{*}\Tilde{E},\varphi_2\big)$ is a stable good filtered Higgs bundle with the perfect pairing $\Tilde{C}$.
\end{Proposition}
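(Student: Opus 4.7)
The plan is to verify three things separately for the filtered Higgs bundle $\bigl(\mathcal{P}_*\tilde{E}, \varphi_2\bigr)$: goodness, perfectness of the induced pairing $\tilde{C}$, and stability.

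For goodness at $\infty$, the frame $(v_1,v_2,v_3)$ already puts $\varphi_2$ into a $1\times 1$ block with eigenvalue $z^2\,\mathrm{d}z/z$ on $v_1$ and a nilpotent $2\times 2$ logarithmic block on $(v_2,v_3)$. Setting $w=1/z$, the eigenvalue on $v_1$ becomes $-w^{-3}\,\mathrm{d}w = \mathrm{d}\mathfrak{a}_1$ with $\mathfrak{a}_1 = w^{-2}/2 \in w^{-1}\mathbb{C}[w^{-1}]$, so the decomposition $\tilde{E}_\infty = \strsheaf_{\infty}v_1 \oplus (\strsheaf_{\infty}v_2\oplus\strsheaf_{\infty}v_3)$ with irregular parts $(\mathfrak{a}_1,0)$ realizes goodness at $\infty$. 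At $0$, I compute $\varphi_2$ in the frame $(u_1,u_2,u_3)$ using $\varphi_2(v_1)=zv_1\,\mathrm{d}z$, $\varphi_2(v_2)=0$, $\varphi_2(v_3)=v_2\,\mathrm{d}z/z$, obtaining $\varphi_2(u_1)=(z^2 u_1 - \sqrt{-1}\,u_3)\,\mathrm{d}z/z$, $\varphi_2(u_2)=0$, and $\varphi_2(u_3)=u_2\,\mathrm{d}z/z$; the matrix of $\varphi_2$ in $(u_1,u_2,u_3)$ is holomorphic at $0$ times $\mathrm{d}z/z$, giving goodness at $0$ with trivial irregular part.

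For perfectness of $\tilde{C}$, at $\infty$ one has $C(v_1,v_j)=0$ for $j=2,3$ (since $v_1$ and $v_2,v_3$ belong to distinct generalized eigenspaces of $\varphi_2$ and $C$ pairs such spaces to zero), together with $C(v_1,v_1)=1$, $C(v_2,v_2)=C(v_3,v_3)=0$, $C(v_2,v_3)=1$; the Gram matrix is anti-diagonal with unit entries. At $0$, an expansion from the formulas for the $u_i$ gives $C(u_1,u_1)=0$, $C(u_1,u_2)=\sqrt{-1}$, $C(u_1,u_3)=0$, $C(u_2,u_2)=0$, $C(u_2,u_3)=z^2$, $C(u_3,u_3)=-1$, whose determinant is the unit $-1$; hence $\tilde{C}$ is perfect at both points.

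For stability, by the earlier lemma only $E_1, E_2, E_3, E_4$ are proper nontrivial Higgs subbundles. I first note $\deg\mathcal{P}_*\tilde{E}=0$, using $u_1\wedge u_2\wedge u_3 = v_1\wedge v_2\wedge v_3$ to see that $\det\mathcal{P}_0\tilde{E}$ is the trivial line bundle and that all parabolic weights vanish. Then for each $E_i$, I compute $\mathcal{P}_0 E_{i,0}=\mathcal{P}_0\tilde{E}_0 \cap E_{i,0}$ by solving for $(a_1,a_2,a_3)\in\strsheaf_0^3$ making $a_1u_1+a_2u_2+a_3u_3$ lie in the prescribed $v_j$-span. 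This yields local generators $z^2 v_1$ for $E_{1,0}$, $zv_2$ for $E_{2,0}$, $(u_2,u_3)$ for $E_{3,0}$ with determinant generator $z^2\,v_2\wedge v_3$, and $(v_1+\sqrt{-1}\,z^{-1}v_2,\,zv_2)$ for $E_{4,0}$ with determinant generator $z\,v_1\wedge v_2$; at $\infty$ the induced filtrations are the obvious ones generated by the relevant $v_i$'s. Reading off orders on $\mathbb{P}^1$ gives $\deg\mathcal{P}_*E_1=-2$, $\deg\mathcal{P}_*E_2=-1$, $\deg\mathcal{P}_*E_3=-2$, $\deg\mathcal{P}_*E_4=-1$, with slopes $-2,-1,-1,-1/2$, all strictly less than $\mu(\tilde{E})=0$, so $(\mathcal{P}_*\tilde{E},\varphi_2)$ is stable. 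The main obstacle is the intersection computation for the $\mathcal{P}_0 E_{i,0}$, particularly for $E_4$: the change of basis between $(u_1,u_2,u_3)$ and $(v_1,v_2,v_3)$ is non-diagonal and mixes multiple orders of $z$, so one must carefully identify the $\strsheaf_0$-linear combinations of the $u_i$ that cancel unwanted $v_j$-components without introducing spurious poles, and then recognize the resulting submodule as a free $\strsheaf_0$-module of the correct rank.
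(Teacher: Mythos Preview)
Your proof is correct and follows essentially the same route as the paper: verify goodness via the explicit action of $\psi_2$ on the frames $(u_i)$ and $(v_i)$, verify perfectness of $\tilde C$ via the Gram matrices in those frames, and check stability by computing the degrees of $E_1,E_2,E_3,E_4$. The only noteworthy difference is in the treatment of $E_1$ and $E_3$: the paper observes that the restricted pairings $\tilde C|_{E_1}$ and $\tilde C|_{E_3}$ fail to be perfect (using $C(u_2,u_3)=z^2$ and $v_1=z^2u_1-\sqrt{-1}z^{-2}u_2-\sqrt{-1}u_3$) and deduces $\deg\mathcal{P}_*E_1<0$, $\deg\mathcal{P}_*E_3<0$ without computing the exact values, whereas you compute these degrees directly as $-2$ via the local generators $z^2v_1$ and $u_2\wedge u_3=z^2\,v_2\wedge v_3$. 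Your direct computation is slightly more informative; the paper's pairing argument is slightly more conceptual and avoids one intersection calculation. Either way the stability check goes through.
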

\begin{proof}
 We have
 \begin{alignat*}{4}
 &\psi_2(u_1)=z^2 u_1-\sqrt{-1}u_3,\qquad&& \psi_2(u_2)=0,\qquad&& \psi_2(u_3)=u_2,&\\
 &\psi_2(v_1)=z^2v_1,\qquad&& \psi_2(v_2)=0,\qquad&& \psi_2(v_3)=v_2.&
 \end{alignat*}
 These imply that $\bigl(\mathcal{P}_{*}\Tilde{E},\varphi_2\bigr)$ is good. Moreover, we have
 \begin{gather*}
 C(u_1,u_2)=\sqrt{-1},\qquad C(u_3,u_3)=-1,\qquad C(v_1,v_1)=1,\qquad C(v_2,v_3)=1.
 \end{gather*}
 Thus, the pairing $\Tilde{C}$ is perfect. We also have
 \begin{equation*}
 C(u_2,u_3)=z^2,\qquad v_1= z^2u_1-\sqrt{-1}z^{-2}u_2-\sqrt{-1}u_3.
 \end{equation*}
 These imply that the induced pairings $\Tilde{C}\colon \mathcal{P}_{*}E_1\otimes \mathcal{P}_{*}E_1\to \mathcal{P}^{(0)}_*(\mathcal{O}_{\ps^1}(*D))$ and $\Tilde{C}\colon \mathcal{P}_{*}E_3\otimes \mathcal{P}_{*}E_3 \to \mathcal{P}^{(0)}_*(\mathcal{O}_{\ps^1}(*D))$ are not perfect.
 Therefore, we obtain that $\deg(\mathcal{P}_{*}E_1)<0$ and $\deg(\mathcal{P}_{*}E_3)<0$. Since $u_2=zv_2$ is a local frame of $\mathcal{P}_{0,0}E_2$ on $\C$ and $v_2$ is a local frame of $\mathcal{P}_{0,0}E_2$ on $\ps^1\setminus \{0\}$, we~see~that
 \begin{equation*}
 \deg(\mathcal{P}_{*}E_2) = \deg(\mathcal{P}_{0,0}E_2) = -1.
 \end{equation*}
 The sections $z^2u_1-\sqrt{-1}u_3=v_1+\sqrt{-1}zv_2$ and $u_2=zv_2$ form a local frame of $\mathcal{P}_{0,0}E_4$ on $\C$. Thus, $zv_1\wedge v_2$ is a frame of $\det(\mathcal{P}_{0,0}E_4)$ on $\C$. The section $v_1\wedge v_2$ is a local frame of $\det(\mathcal{P}_{0,0}E_4)$ on $\ps^1\setminus \{0\}$. Therefore, we obtain that
 \begin{equation*}
 \deg(\mathcal{P}_{*}E_4) = \deg(\mathcal{P}_{0,0}E_4) = -1.
 \end{equation*}
 As a result, $\bigl(\mathcal{P}_{*}\Tilde{E},\varphi_2\bigr)$ is stable.
\end{proof}

\begin{Proposition}\label{prop_Cstar_b1}
 Let $u'_1$, $u'_2$ and $u'_3$ be sections of $\Tilde{E}$ defined as follows:
 \begin{gather*}
 u'_1 = z^{-1}v_1 + \frac{\sqrt{-1}}{2}z^{-2}v_2 + \sqrt{-1}z^{-1}v_3,\\
 u'_2 = v_2,\qquad u'_3 = -\frac{1}{2} z^{-1}v_2 + v_3.
 \end{gather*}
 We set a filtered Higgs bundle $\bigl(\mathcal{P}_{*}\Tilde{E},\varphi_1\bigr)$ as
 \begin{gather*}
 \mathcal{P}_c \Tilde{E}_{0} = \strsheaf_{\ps^1,0}([c]0)u'_1\oplus \strsheaf_{\ps^1,0}([c]0)u'_2\oplus \strsheaf_{\ps^1,0}([c]0)u'_3,\\
 \mathcal{P}_c \Tilde{E}_{\infty} = \strsheaf_{\ps^1,\infty}([c]\infty)v_1\oplus \strsheaf_{\ps^1,\infty}([c-1/2]\infty)v_2\oplus \strsheaf_{\ps^1,\infty}([c-1/2]\infty)v_3.
 \end{gather*}
 Then $\bigl(\mathcal{P}_{*}\Tilde{E},\varphi_1\bigr)$ is a stable good filtered Higgs bundle with the perfect pairing $\Tilde{C}$.
\end{Proposition}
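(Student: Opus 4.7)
The plan is to adapt the proof of Proposition~\ref{prop_Cstar_b2} to the case $f=az$; the new feature is that the parabolic weights of $v_2$ and $v_3$ at~$\infty$ are $1/2$ rather than $0$. The three things to verify are goodness of $\bigl(\mathcal{P}_{*}\Tilde{E},\varphi_1\bigr)$, perfectness of $\Tilde{C}$, and stability.

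For goodness, using $\psi_1(v_1)=zv_1$, $\psi_1(v_2)=0$, $\psi_1(v_3)=v_2$, direct substitution into the defining formulas for $u'_1,u'_2,u'_3$ gives
\begin{gather*}
\psi_1(u'_1)=zu'_1-\sqrt{-1}\,u'_3,\qquad \psi_1(u'_2)=0,\qquad \psi_1(u'_3)=u'_2,
\end{gather*}
so $\psi_1$ preserves the lattice at~$0$ and $\varphi_1=\psi_1\,{\rm d}z/z$ is logarithmic there with trivial $\mathfrak{a}$. At~$\infty$, the filtration splits as $\mathcal{P}_{*}E_{1,\infty}\oplus \mathcal{P}_{*}E_{3,\infty}$; on $E_1$, $\varphi_1(v_1)=-w^{-2}\,{\rm d}w\,v_1=d\bigl(w^{-1}\bigr)v_1$, so setting $\mathfrak{a}_1=w^{-1}\in w^{-1}\C\bigl[w^{-1}\bigr]$ makes $\varphi_1-d\mathfrak{a}_1\,\id$ vanish; on $E_3$, $\varphi_1(v_3)=-v_2\,{\rm d}w/w$ is logarithmic with $\mathfrak{a}_3=0$.

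For perfectness, I would compute $\Tilde{C}$ on both frames using $C(v_1,v_1)=1$, $C(v_2,v_3)=z$, and orthogonality. A short calculation gives
\begin{gather*}
C(u'_1,u'_2)=\sqrt{-1},\qquad C(u'_2,u'_3)=z,\qquad C(u'_3,u'_3)=-1,
\end{gather*}
with the remaining values vanishing; the pairing matrix at~$0$ then has determinant~$-1$, a unit in $\strsheaf_{\ps^1,0}$. At~$\infty$, the pairing matrix in $(v_1,v_2,v_3)$ has determinant $-z^2=-w^{-2}$ of order~$-2$, matching the expected order given the parabolic weights~$(0,1/2,1/2)$; hence $\Phi_{\Tilde{C}}$ is an isomorphism of filtered bundles and $\Tilde{C}$ is perfect. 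For stability, the identity $v_1=zu'_1-\sqrt{-1}\,z^{-1}u'_2-\sqrt{-1}\,u'_3$ shows that $v_1$ has parabolic weight~$1$ in $\mathcal{P}_{*}E_1$ at~$0$; since $C(v_1,v_1)=1$ has order~$0$ rather than the~$-2$ required for perfectness, the induced pairing on $\mathcal{P}_{*}E_1$ is not perfect, so $\deg\mathcal{P}_{*}E_1<0$. Similarly, $u'_2$ and $u'_3$ both have weight~$0$ in $\mathcal{P}_{*}E_3$ at~$0$, but the determinant of the induced pairing matrix is $-z^2$ of order~$2$ at~$0$ rather than the~$0$ required; hence $\deg\mathcal{P}_{*}E_3<0$. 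Direct computation of transition matrices of $\mathcal{P}_{(0,0)}E_2$ and $\mathcal{P}_{(0,0)}E_4$ from frames at~$0$ to frames at~$\infty$ yields $\deg\mathcal{P}_{*}E_2=\deg\mathcal{P}_{*}E_4=-1/2$. Since all four slopes are strictly less than $\mu\bigl(\mathcal{P}_{*}\Tilde{E}\bigr)=0$, stability follows.

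The main obstacle is the careful bookkeeping of the half-integer parabolic weights at~$\infty$, which contribute $-1/2$ terms to the weight sums in the degree formula for $\mathcal{P}_{*}E_2$ and $\mathcal{P}_{*}E_4$, distinguishing the computation from the all-integer-weight setting of Proposition~\ref{prop_Cstar_b2}. Once the frames, transition matrices, and pairing values are written down, the remaining calculations are routine.
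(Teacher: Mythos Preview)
Your proposal is correct and follows essentially the same route as the paper's proof: verifying goodness via the explicit action of $\psi_1$ on the two frames, checking perfectness of $\Tilde{C}$ from the listed pairing values, deducing $\deg\mathcal{P}_{*}E_1<0$ and $\deg\mathcal{P}_{*}E_3<0$ from the non-perfectness of the induced pairings (using the same identity $v_1=zu'_1-\sqrt{-1}z^{-1}u'_2-\sqrt{-1}u'_3$ and the value $C(u'_2,u'_3)=z$), and computing $\deg\mathcal{P}_{*}E_2=\deg\mathcal{P}_{*}E_4=-1/2$ directly from global frames. The only cosmetic difference is that the paper evaluates the degree formula at $c=(0,1/2)$, where $v_2$ and $v_1\wedge v_2$ are global frames and the bundle degree is visibly~$0$, whereas you work at $c=(0,0)$ and track the transition; both give the same $-1/2$.
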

\begin{proof}
 We have
 \begin{alignat*}{4}
 &\psi_1(u'_1) = zu'_1 - \sqrt{-1}u'_3,\qquad&& \psi_1(u'_2) = 0,\qquad&& \psi_1(u'_3) = u'_2,&\\
 &\psi_1(v_1) = zv_1, \qquad&& \psi_1(v_2) = 0, \qquad&& \psi_1(v_3) = v_2.&
 \end{alignat*}
 These imply that $\bigl(\mathcal{P}_{*}\Tilde{E},\varphi_1\bigr)$ is good. Moreover, we have
 \begin{gather*}
 C\big(u'_1,u'_2\big)=\sqrt{-1},\qquad C\big(u'_3,u'_3\big)=-1,\qquad C(v_1,v_1)=1,\qquad C(v_2,v_3)=z.
 \end{gather*}
 Thus, the pairing $\Tilde{C}$ is perfect. Since we also have
 \begin{equation*}
 C\big(u'_2,u'_3\big)= z, \qquad v_1 = zu'_1 -\sqrt{-1}z^{-1}u'_2 - \sqrt{-1}u'_3,
 \end{equation*}
 we obtain that $\deg(\mathcal{P}_{*}E_1)<0$ and $\deg(\mathcal{P}_{*}E_3)<0$ in the same way as Proposition \ref{prop_Cstar_b2}. Since~${u'_2=v_2}$ is a global frame of $\mathcal{P}_{0,1/2}E_2$, we see that
 \begin{equation*}
 \deg(\mathcal{P}_{*}E_2) = \deg(\mathcal{P}_{0,1/2}E_2) - \frac{1}{2} = -\frac{1}{2}.
 \end{equation*}
 The sections $zu'_1-\sqrt{-1}u'_3=v_1+\sqrt{-1}z^{-1}v_2$ and $u'_2=v_2$ form a global frame of $\mathcal{P}_{0,1/2}E_4$. Thus, $v_1\wedge v_2$ is a frame of $\det(\mathcal{P}_{0,1/2}E_4)$. Therefore, we obtain that
 \begin{equation*}
 \deg(\mathcal{P}_{*}E_4) = \deg(\mathcal{P}_{0,1/2}E_4) - \frac{1}{2} = -\frac{1}{2}.
 \end{equation*}
 As a result, $\bigl(\mathcal{P}_{*}\Tilde{E},\varphi_1\bigr)$ is stable.
\end{proof}

By Propositions \ref{prop_Cstar_b2}, \ref{prop_Cstar_b1}, Corollary \ref{main_Cstar_cor} and Proposition \ref{remark_degree}, we obtain Theorem \ref{main_Cstar'}.

\subsection*{Acknowledgement}
The author is grateful to the referees for their careful reading and valuable comments.
The author is grateful to his supervisor, Takuro Mochizuki, for many discussions and helpful advice.
The author thanks Qiongling Li and Takashi Ono for their helpful advice.
This work was supported by JST SPRING, Grant Number JPMJSP2110.

\pdfbookmark[1]{References}{ref}
\LastPageEnding


\begin{thebibliography}{99}
\footnotesize\itemsep=0pt

\bibitem{BB}
Biquard O., Boalch P., Wild non-abelian {H}odge theory on curves,
 \href{https://doi.org/10.1112/S0010437X03000010}{\textit{Compos. Math.}} \textbf{140} (2004), 179--204,
 \href{https://arxiv.org/abs/math.DG/0111098}{arXiv:math.DG/0111098}.

\bibitem{Hi1}
Hitchin N.J., The self-duality equations on a~{R}iemann surface, \href{https://doi.org/10.1112/plms/s3-55.1.59}{\textit{Proc.
 London Math. Soc.}} \textbf{55} (1987), 59--126.

\bibitem{Hi2}
Hitchin N.J., Lie groups and {T}eichm\"uller space, \href{https://doi.org/10.1016/0040-9383(92)90044-I}{\textit{Topology}}
 \textbf{31} (1992), 449--473.

\bibitem{LM3}
Li Q., Mochizuki T., Complete solutions of {T}oda equations and cyclic {H}iggs
 bundles over non-compact surfaces, \href{https://arxiv.org/abs/2010.05401}{arXiv:2010.05401}.

\bibitem{LM2}
Li Q., Mochizuki T., Higgs bundles in the {H}itchin section over non-compact
 hyperbolic surfaces, \textit{Proc. London Math. Soc.}, {t}o appear,
 \href{https://arxiv.org/abs/2307.03365}{arXiv:2307.03365}.

\bibitem{LM1}
Li Q., Mochizuki T., Harmonic metrics of generically regular semisimple {H}iggs
 bundles on noncompact {R}iemann surfaces, \href{https://doi.org/10.2140/tunis.2023.5.663}{\textit{Tunis. J.~Math.}} \textbf{5}
 (2023), 663--711, \href{https://arxiv.org/abs/2210.08215}{arXiv:2210.08215}.

\bibitem{Mo1}
Mochizuki T., Wild harmonic bundles and wild pure twistor {$D$}-modules,
 \textit{Ast\'erisque} \textbf{340} (2011), x+607~pages, \href{https://arxiv.org/abs/0803.1344}{arXiv:0803.1344}.

\bibitem{Si1}
Simpson C.T., Constructing variations of {H}odge structure using
 {Y}ang--{M}ills theory and applications to uniformization, \href{https://doi.org/10.2307/1990994}{\textit{J.~Amer.
 Math. Soc.}} \textbf{1} (1988), 867--918.

\bibitem{Si2}
Simpson C.T., Harmonic bundles on noncompact curves, \href{https://doi.org/10.2307/1990935}{\textit{J.~Amer. Math.
 Soc.}} \textbf{3} (1990), 713--770.

\end{thebibliography}
\end{document}